\documentclass[10pt]{article}%
\usepackage{amsmath}
\usepackage{amsfonts}
\usepackage{mathrsfs}
\usepackage{amssymb, color}
\usepackage[linkcolor=black,anchorcolor=black,citecolor=black]{hyperref}
\usepackage{graphicx}
\numberwithin{equation}{section}
\usepackage[body={15.5cm,21cm}, top=3cm]{geometry}%
\setcounter{MaxMatrixCols}{30}
\providecommand{\U}[1]{\protect\rule{.1in}{.1in}}
\providecommand{\U}[1]{\protect \rule{.1in}{.1in}}
\newtheorem{theorem}{Theorem}[section]

\newtheorem{definition}[theorem]{Definition}

\newtheorem{lemma}[theorem]{Lemma}

\newtheorem{proposition}[theorem]{Proposition}
\newtheorem{remark}[theorem]{Remark}

\newtheorem{assumption}[theorem]{Assumption}
\newenvironment{proof}[1][Proof]{\noindent \textbf{#1.} }{\  \rule{0.5em}{0.5em}}
\DeclareMathOperator*{\esssup}{ess\,sup}

\begin{document}
\title{Optimal Stopping under $G$-expectation}
\author{ Hanwu Li\thanks{Center for Mathematical Economics, Bielefeld University,
hanwu.li@uni-bielefeld.de. }
}
\date{}
\maketitle
\begin{abstract}
We develop a theory of optimal stopping problems under $G$-expectation framework. We first define a new kind of random times, called $G$-stopping times, which is suitable for this problem. For the discrete time case with finite horizon, the value function is defined backwardly and we show that it is the smallest $G$-supermartingale dominating the payoff process and the optimal stopping time exists. Then we extend this result both to the infinite horizon and to the continuous time case. We also establish the relation between the value function and solution of reflected BSDE driven by $G$-Brownian motion.
\end{abstract}

\textbf{Key words}: optimal stopping, $G$-expectation, $G$-stopping time, Knightian uncertainty

\textbf{MSC-classification}: 60H10, 60H30
\section{Introduction}
Consider a filtered probability space $(\Omega,\mathcal{F},P,\mathbb{F}=\{\mathcal{F}_t\}_{t\in[0,T]})$, the objective of optimal stopping problem is try to find a stopping time $\tau^*$ in order to maximize the expectation of $X_\tau$ over all stopping times. Here $X$ is a given progressively measurable and integrable process, called the payoff process. In financial market, $X$ can be regarded as the gain of an option. An agent has the right to stop this option at any time $t$ and then get the reward $X_t$, or to wait in the hope that he would obtain a bigger reward if he stops in the future. This problem has wide applications in finance and economics, such as pricing for American contingent claims and the decision of a firm to enter a new market.  Note that there is an implicit hypothesis in the above examples that the agent knows the probability distribution of the payoff process. This assumption excludes the case where the agent faces Knightian uncertainty. In this paper, we will investigate the optimal stopping problem under Knightian uncertainty, especially volatility uncertainty.

The optimal stopping problems under Knightian uncertainty attracks a great deal of attention due to its importance both in theory and in applications. We may refer to the papers \cite{BY1}, \cite{BY2}, \cite{CR}, \cite{ETZ}, \cite{NZ}, \cite{R}. Roughly speaking, Cheng and Riedel \cite{CR}, Riedel \cite{R} considered the optimal stopping problem in a multiple priors framework, which makes the linear expectation be a nonlinear one. Bayraktar and Yao \cite{BY1,BY2} studied this problem under what they called the filtration consistent nonlinear expectations. In these papers, they put assumptions either on the multiple priors $\mathcal{P}$ or on the nonlinear expectation $\mathcal{E}$ to make sure that the associated conditional expectation is time consistent and the optional sampling theorem still hold true. Similar with the classical case, the value function is an $\mathcal{E}$-supermartingale dominating the payoff process $X$. Besides, its first hitting time $\tau^*$ of $X$ is optimal and it is an $\mathcal{E}$-martingale up to time $\tau^*$. However, in the above papers, all probability measures in $\mathcal{P}$ are equivalent to a reference measure $P$ thus these models can only represent drift uncertainty. The ambiguity of volatility uncertainty requires that $\mathcal{P}$ is a family of non-dominated probability measures which makes this situation much more complicated. Ekren, Touzi and Zhang \cite{ETZ} and Nutz and Zhang \cite{NZ} investigated the optimal stopping problem under non-dominated family of probability measures. In fact, \cite{ETZ} studied the problem $\sup_{\tau}\sup_{P}E^P[X_\tau]$ which can be seen as a control problem while \cite{NZ} considered the problem $\inf_{\tau}\sup_{P}E^P[X_\tau]$ which can be regarded as a game problem. It is worth pointing out that in their papers, the value function is defined pathwise. They also obtained the optimality of the first hitting time $\tau^*$ and the nonlinear martingale property of the value function stopped at $\tau^*$.

According to the papers listed above, it is obvious that we need some nonlinear expectations to study the optimal stopping problem under Knightian uncertainty. Recently, Peng systematically established a time consistent nonlinear expectation theory, called $G$-expectation theory (see \cite{P07a, P08a}). As the counterpart of the classical linear expectation case, the notions of $G$-Brownian motion, $G$-martingale and $G$-It\^{o}'s integral were also introduced. A basic mathematical tool for the analysis is backward stochastic differential equations driven by $G$-Brownian motion ($G$-BSDE) studied by Hu, Ji, Peng and Song. In \cite{HJPS1,HJPS2}, they proved the existence and uniqueness of solutions to $G$-BSDE and established the corresponding comparison theorem, Girsanov transformation and Feynman-Kac formula.  The $G$-expectation theory is convincing as a useful tool for developing a theory of financial markets under volatility uncertainty. Therefore, the objective of this paper is to study the optimal stopping problem under the $G$-expectation framework. 

In the classical case, the value function is usually defined by the essential supremum over a set of random variables. However, in the $G$-framework, the essential supremum should be defined in the quasi-surely sense and thus may not exist. Besides, the random variables in $G$-framework require some continuity and monotonicity properties. For a random time $\tau$ and a process $X$, $X_\tau$ may not belong to a suitable space where the conditional $G$-expectation is well defined. Due to these difficulties lie in this problem, the optimal stopping under $G$-expectation is far from being understood.

In this paper, we first deal with optimal stopping problems under $G$-expectation in discrete time case, both in finite and infinite time horizon. We first restrict ourselves to a new kind of random time, called $G$-stopping time. The advantage is that the definition of conditional $G$-expectation can be extended to the random variable $X$ stopped at a  $G$-stopping time $\tau$. Besides, on this large space, many important properties, such as time consistency, still hold. For finite time case, we define the value function $V$ backwardly. It is not difficult to check that the value function is the minimal $G$-supermartingale dominating the payoff process $X$. Although we can only deal with a special kind of stopping times, we do not loose to much information since the first hitting time is an optimal $G$-stopping time. Then we extend the theory to infinite horizon case, where the backward induction cannot be applied directly. The value function is defined by the limit of the one in the finite time case. We show that it is still the minimal $G$-supermartingale which is greater than the payoff process and satisfies the recursive equations similar with the finite time case. Recall that Li et al. \cite{LPSH} studied the reflected BSDE driven by $G$-Brownian motion, which means the solution is required to be above an obstacle process. In fact, the solution of reflected $G$-BSDE is the minimal nonlinear supermartingale dominating the obstacle process. We show that it coincides with the value function of the optimal stopping problem in continuous time case when the payoff process $X$ equals to the obstacle process.

From the mathematical point of view, the optimal stopping problem introduced in \cite{ETZ} is the closest to ours. Compared with their results, the advantage of considering this problem under $G$-expectation lies in the following aspects. First, we do not need to assume the boundedness of the payoff process and we can study this problem when the time horizon is infinite. For the continuous time case, it can be shown that the value function is the limit of the ones of the discrete time case, which becomes useful to get numerical approximations. Besides, similar with the result in \cite{CR}, we can establish the relation between the value function defined by the Snell envelope and the solution of reflected BSDE driven by $G$-Brownian motion. At last, the case that the payoff process is Markovian can be involved and similar results as the classical case still hold.

This paper is organized as follows. In Section 2, we first introduce the $G$-stopping times and the essential supremum in the quasi-surely sense. Section 3 is devoted to study optimal stopping problems under $G$-expectation, both in finite and infinite time case. Then we extend the results to the continuous time case and we show that the value function of optimal stopping problem corresponds to the solution of reflected $G$-BSDE in Section 4. In Section 5, we presents some results of optimal stopping when the payoff process is Markovian.

\section{$G$-stopping times and essential supremum in the quasi-surely sense}

In this section, we introduce the essential supremum in the quasi-surely sense and a new kind of random time, called $G$-stopping time, appropriate for the study of optimal stopping under $G$-expectation. Then we investigate some properties of extended (conditional) $G$-expectation. Some basic notions and results of $G$-expectation can be found in the Appendix.

Let $(\Omega,L_G^1(\Omega),\hat{\mathbb{E}})$ be the $G$-expectation space and $\mathcal{P}$ be the weakly compact set that represents $\hat{\mathbb{E}}$. The following notations (see \cite{HP13}) will be frequently used in this paper.
\begin{align*}
L^0(\Omega)&:=\{X:\Omega\rightarrow [-\infty,\infty] \textrm{ and } X \textrm{ is } \mathcal{B}(\Omega)\textrm{-measurable}\},\\
\mathcal{L}(\Omega)&:=\{X\in L^0(\Omega):E^P[X] \textrm{ exists for each }P\in \mathcal{P}\},\\
\mathbb{L}^p(\Omega)&:=\{X\in L^0(\Omega):\hat{\mathbb{E}}[|X|^p]<\infty\} \textrm{ for } p\geq1,\\
L_G^{1^*}(\Omega)&:=\{X\in\mathbb{L}^1(\Omega):\exists \{X_n\}\subset L_G^1(\Omega) \textrm{ such that } X_n\downarrow X, q.s.\},\\
L_G^{*1}(\Omega)&:=\{X-Y:X,Y\in L_G^{1^*}(\Omega)\},\\
L_G^{1^*_*}(\Omega)&:=\{X\in\mathbb{L}^1(\Omega):\exists \{X_n\}\subset L_G^{1^*}(\Omega) \textrm{ such that } X_n\uparrow X, q.s.\},\\
\bar{L}_G^{1^*_*}(\Omega)&:=\{X\in\mathbb{L}^1(\Omega):\exists \{X_n\}\subset L_G^{1^*_*}(\Omega) \textrm{ such that } \hat{\mathbb{E}}[|X_n-X|]\rightarrow 0\}.
\end{align*}

\begin{remark}
	It is easy to check that $L_G^{1^*}(\Omega)\subset L_G^{*1}(\Omega)\subset L_G^{1^*_*}(\Omega)\subset \bar{L}_G^{1^*_*}(\Omega)$. Furthermore, it is important to note that $L_G^{1^*}(\Omega)$, $L_G^{1^*_*}(\Omega)$,  $\bar{L}_G^{1^*_*}(\Omega)$ are not linear spaces and $L_G^{*1}(\Omega)$ is a linear space.
\end{remark}

Set $\Omega_t=\{\omega_{\cdot\wedge t}:\omega\in\Omega\}$ for $t>0$. Similarly, we can define $L^0(\Omega_t)$, $\mathcal{L}(\Omega_t)$, $\mathbb{L}^p(\Omega_t)$, $L_G^{1^*}(\Omega_t)$, $L_G^{1^*_*}(\Omega_t)$ and $\bar{L}_G^{1^*_*}(\Omega_t)$ respectively. We now give the definition of stopping times under the $G$-expectation framework.
\begin{definition}
	A random time $\tau :\Omega\rightarrow [0,\infty)$ is called a $G$-stopping time if $I_{\{\tau\leq t\}}\in L_G^{1^*}(\Omega_t)$ for each $t\geq 0$.
\end{definition}

Let $\mathcal{H}\subset \mathbb{L}^1(\Omega)$ be a set of random variables. We give the definition of essential supremum of $\mathcal{H}$ in the quasi-surely sense. Roughly speaking, we only need to replace the ``almost-surely" in the classical definition by ``quasi-surely".
\begin{definition}\label{ess}
	The esssential supremum of $\mathcal{H}$, denoted by $\underset{\xi\in \mathcal{H}}{ess\sup}\text{ }\xi$, is a random variable in $L^0(\Omega)$ such that:
	\begin{description}
		\item[(i)] For any $\xi\in \mathcal{H}$,  $\underset{\xi\in \mathcal{H}}{ess\sup}\text{ }\xi\geq \xi$, q.s.
		\item[(ii)] If there exists another random variable $\eta'\in\mathcal{L}(\Omega)$ such that $\eta'\geq \xi$, q.s. for any $\xi\in\mathcal{H}$, then $\underset{\xi\in \mathcal{H}}{ess\sup}\text{ }\xi\leq \eta'$ q.s.
	\end{description}
\end{definition}

\begin{remark}
	It remains open to prove the existence of the essential supremum in the quasi-surely sense for the general case. However, if it does exist, it must be unique.
\end{remark}

\begin{remark}\label{esse2}	
	Consider a probability space $(\Omega, \mathcal{F}, P)$ and a set of random variables $\Phi$. Set
	\begin{displaymath}
	C:=\sup\{\sup_{\phi\in \widetilde{\Phi}} E^P[\phi]: \widetilde{\Phi}\textrm{ is a countable subset of } \Phi\}.
	\end{displaymath}
	There exists a countable set $\Phi^*:=\{\phi_n,n\in\mathbb{N}\}$ contained in $\Phi$, such that
	\begin{displaymath}
	C=\lim_{n\rightarrow \infty}E^P[\phi_n].
	\end{displaymath}
	Then $\eta:=\sup_{n\in \mathbb{N}}\phi_n$ is the essential supremum of $\Phi$ under $P$. However, this construction does not hold in the quasi-surely sense if we only replace the expectation $E^P$ by the $G$-expectation $\hat{\mathbb{E}}$. We may consider the following example.
	
		Let $1=\underline{\sigma}^2<\bar{\sigma}^2=2$. Consider $\mathcal{H}=\{f_\alpha(\langle B\rangle_1),\alpha\in[1,2]\}$, where $f_\alpha(x)=(x-1)^\alpha+1$. We can calculate that, for any $\alpha$,
	\[\hat{\mathbb{E}}[f_\alpha(\langle B\rangle_1)]=\sup_{x\in[1,2]}f_\alpha(x)=2.\]
	Then choose a countable subset $\mathcal{H}^*:=\{f_\alpha(\langle B\rangle_1),\alpha\in \mathbb{Q}\cap[\frac{3}{2},2]\}$. It is easy to check that
	\[\sup_{f_\alpha(\langle B\rangle_1)\in \mathcal{H}^*}\hat{\mathbb{E}}[f_\alpha(\langle B\rangle_1)]=\sup\{\sup_{\xi\in \widetilde{\mathcal{H}}} \hat{\mathbb{E}}[\xi]: \widetilde{\mathcal{H}}\textrm{ is a countable subset of } \mathcal{H}\}.\]
	However, $\sup_{f_\alpha(\langle B\rangle_1)\in \mathcal{H}^*}f_\alpha(\langle B\rangle_1)=f_{\frac{3}{2}}(\langle B\rangle_1)\leq f_1(\langle B\rangle_1)$ is not the essential supremum of $\mathcal{H}$.
\end{remark}
	
\begin{remark}
		 In the classical case, the essential supremum  can be constucted by countable many random variables while this does not hold true for the one in the quasi-surely sense. We may consider the following example. Let $1=\underline{\sigma}^2<\bar{\sigma}^2=2$. Consider $\mathcal{H}=\{I_{\{\langle B\rangle_1=x\}},x\in[1,2]\}$. If there exists $\widetilde{\mathcal{H}}=\{I_{\{\langle B\rangle_1=x_n\}}, x_n\in[1,2], n\in\mathbb{N}\}$, such that
	\begin{displaymath}
	\underset{\xi\in \mathcal{H}}{ess\sup}\textrm{ }\xi=\sup_{n\in \mathbb{N}} I_{\{\langle B\rangle_1=x_n\}}.
	\end{displaymath}
	Then there exists a constant $x_0\in[1,2]$ such that $x_0\neq x_n$, for any $n\in\mathbb{N}$. We have
	\begin{displaymath}
	c(\sup_{n\in \mathbb{N}} I_{\{\langle B\rangle_1=x_n\}}<I_{\{\langle B\rangle_1=x_0\}})=c(\langle B\rangle_1=x_0)=1,
	\end{displaymath}
	which is a contradiction.
\end{remark}

We now list some typcial situations under which the essential supremum exists.

\begin{proposition}\label{essp2}
	If there are only countable many random variables in $\mathcal{H}$, then the essential supremum exists.
\end{proposition}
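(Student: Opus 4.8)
The plan is to exhibit the essential supremum explicitly, as the pointwise supremum of the (countably many) elements of $\mathcal{H}$, and then to verify the two defining conditions of Definition \ref{ess}. Write $\mathcal{H}=\{\xi_n:n\in\mathbb{N}\}$ and set
\[
\eta:=\sup_{n\in\mathbb{N}}\xi_n .
\]
The first thing I would check is that $\eta$ is an admissible candidate, i.e. $\eta\in L^0(\Omega)$. Since each $\xi_n\in\mathbb{L}^1(\Omega)\subset L^0(\Omega)$ is $\mathcal{B}(\Omega)$-measurable with values in $[-\infty,\infty]$, the countable pointwise supremum $\eta$ is again $\mathcal{B}(\Omega)$-measurable and $[-\infty,\infty]$-valued, so $\eta\in L^0(\Omega)$.

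Condition (i) should be immediate: for every fixed $m$ one has $\eta=\sup_n\xi_n\geq\xi_m$ at every point of $\Omega$, hence in particular $\eta\geq\xi_m$ q.s. The content of the argument is therefore entirely in condition (ii). There I would take an arbitrary $\eta'\in\mathcal{L}(\Omega)$ with $\eta'\geq\xi_n$ q.s. for each $n$, unfold the quasi-sure inequalities into polar sets $N_n$ with $c(N_n)=0$ and $\eta'(\omega)\geq\xi_n(\omega)$ for $\omega\notin N_n$, and form $N:=\bigcup_n N_n$. Using countable subadditivity of the capacity, $c(N)\leq\sum_n c(N_n)=0$, so $N$ is polar; outside $N$ we have $\eta'\geq\xi_n$ simultaneously for all $n$, whence $\eta'\geq\sup_n\xi_n=\eta$, i.e. $\eta\leq\eta'$ q.s. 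This identifies $\eta$ as the essential supremum.

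The proof is elementary, and I do not expect a genuine obstacle; the single point that deserves care is the step in (ii) passing from ``$\eta'\geq\xi_n$ q.s. for every $n$'' to ``$\eta'\geq\eta$ q.s.'', which rests exactly on the fact that a countable union of polar sets is polar. This is precisely the property that breaks down in the uncountable setting, as the preceding remarks show, so I would state it explicitly even though it is the crux rather than a difficulty.
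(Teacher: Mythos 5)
Your proposal is correct and takes essentially the same route as the paper: define $\eta$ as the pointwise supremum over the countable family, and deduce condition (ii) from the countable subadditivity of the capacity $c$, which makes a countable union of polar sets polar. The only cosmetic difference is that you unfold the quasi-sure inequalities into explicit polar sets $N_n$, whereas the paper bounds $c(\eta'<\eta)$ directly by $\sum_n c(\eta'<\xi_n)$.
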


\begin{proof}
	Without loss of generality, we may assume $\mathcal{H}=\{\xi_n, n\in\mathbb{N}\}$. We then define
	\begin{displaymath}
	\eta(\omega):=\sup_{n\in \mathbb{N}}\xi_n(\omega).
	\end{displaymath}
	We claim that $\eta$ is the essential supremum of $\mathcal{H}$. It is easy to check (i). We now show (ii) holds true. If $\eta'$ is a  random variable in $\mathcal{L}(\Omega)$ such that $\eta'\geq \xi_n$, q.s. for any $n\in \mathbb{N}$, then we have $c(\eta'<\xi_n)=0$, for any $n\in\mathbb{N}$. Note that
	\begin{displaymath}
	c(\eta'<\eta)=c(\{\omega: \exists n \textrm{ such that } \eta'<\xi_n\})\leq \sum_{n=1}^{\infty}c(\eta'<\xi_n)=0,
	\end{displaymath}
	which completes the proof.
\end{proof}

\begin{definition}
	A set $\widetilde{\mathcal{H}}$ is said to be dense in $\mathcal{H}$, if for any $\xi\in \mathcal{H}$, there exists a sequence $\{\xi_n, n\in\mathbb{N}\}\subset \widetilde{\mathcal{H}}$ such that $\mathbb{\hat{E}}[|\xi_n-\xi|]\rightarrow 0$ as $n\rightarrow\infty$.
\end{definition}

\begin{proposition}\label{essp3}
	If $\mathcal{H}$ has a countable dense subset, then the essential supremum of $\mathcal{H}$ exists.
\end{proposition}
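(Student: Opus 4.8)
The plan is to reduce the claim to the countable case already settled in Proposition \ref{essp2}. Let $\widetilde{\mathcal{H}}=\{\xi_n,n\in\mathbb{N}\}$ be a countable dense subset of $\mathcal{H}$. By Proposition \ref{essp2} the essential supremum of $\widetilde{\mathcal{H}}$ exists, and it is realized by the pointwise supremum; denote it by $\eta(\omega):=\sup_{n\in\mathbb{N}}\xi_n(\omega)$, so that $\eta=\esssup_{n\in\mathbb{N}}\xi_n$. I claim this same $\eta$ is the essential supremum of the whole family $\mathcal{H}$, so it remains to verify the two defining properties of Definition \ref{ess} with respect to $\mathcal{H}$.

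Property (ii) is immediate. Let $\eta'\in\mathcal{L}(\Omega)$ satisfy $\eta'\geq\xi$ q.s. for every $\xi\in\mathcal{H}$. Since $\widetilde{\mathcal{H}}\subset\mathcal{H}$, in particular $\eta'\geq\xi_n$ q.s. for every $n$, so $\eta'$ is an upper bound of $\widetilde{\mathcal{H}}$ in the sense of Definition \ref{ess}(ii). Because $\eta$ is the essential supremum of $\widetilde{\mathcal{H}}$, its minimality property yields $\eta\leq\eta'$ q.s.

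Property (i) is where the density must be used, and this is the main obstacle. Fix an arbitrary $\xi\in\mathcal{H}$. By the definition of a dense subset there is a sequence $\{\xi_{n_k}\}\subset\widetilde{\mathcal{H}}$ with $\hat{\mathbb{E}}[|\xi_{n_k}-\xi|]\to0$, and the point is to upgrade this $\mathbb{L}^1$-convergence to quasi-sure convergence along a further subsequence. To this end I would invoke the Markov-type inequality $c(|\xi_{n_k}-\xi|>\varepsilon)\leq\varepsilon^{-1}\hat{\mathbb{E}}[|\xi_{n_k}-\xi|]$, which follows from $c(A)=\sup_{P\in\mathcal{P}}P(A)$ together with the classical Markov inequality under each $P\in\mathcal{P}$; this shows $\xi_{n_k}\to\xi$ in capacity. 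Passing to a subsequence (still denoted $\xi_{n_k}$) for which $\sum_k c(|\xi_{n_k}-\xi|>2^{-k})<\infty$, the countable subadditivity of $c$ and a Borel--Cantelli argument give $\xi_{n_k}\to\xi$ q.s., that is, outside a polar set.

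Finally I would combine this with the fact that, $\eta$ being the pointwise supremum, $\eta\geq\xi_{n_k}$ holds everywhere for every $k$. Since a countable union of polar sets is polar, outside a single polar set we simultaneously have $\xi_{n_k}\to\xi$; letting $k\to\infty$ yields $\eta\geq\xi$ q.s. This establishes (i) for the arbitrary $\xi\in\mathcal{H}$ and completes the identification of $\eta$ with $\esssup_{\xi\in\mathcal{H}}\xi$. The only delicate point throughout is the passage from $\mathbb{L}^1$- to quasi-sure convergence along a subsequence; everything else is a direct transcription of the countable argument.
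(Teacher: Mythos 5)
Your proposal is correct and follows essentially the same route as the paper: take $\eta$ to be the pointwise supremum of the countable dense subset, note that property (ii) is immediate, and establish property (i) by upgrading the $\mathbb{L}^1$-convergence $\hat{\mathbb{E}}[|\xi_{n_k}-\xi|]\to 0$ to quasi-sure convergence along a subsequence. The only difference is that the paper cites this subsequence extraction directly (Proposition 1.17 of Chapter VI in \cite{P10}), whereas you re-derive it via the Markov inequality, countable subadditivity of $c$, and Borel--Cantelli, which is exactly the standard proof of that cited result.
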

\begin{proof}
	Without loss of generality, set $\widetilde{\mathcal{H}}:=\{\xi_m,m\in \mathbb{N}\}$ is the countable dense subset of $\mathcal{H}$. Denote
	\begin{displaymath}
	\eta(\omega):=\sup_{m\in\mathbb{N}}\xi_m(\omega).
	\end{displaymath}
	We claim that $\eta$ is the essential supremum of $\mathcal{H}$. It is sufficent to prove for any $\xi\in\mathcal{H}$, $\eta\geq \xi$, q.s. For any $\xi\in\mathcal{H}$, there exists a sequence $\{\hat{\xi}_n, n\in\mathbb{N}\}\subset \widetilde{\mathcal{H}}$ such that $\mathbb{\hat{E}}[|\hat{\xi}_n-\xi|]\rightarrow 0$ as $n\rightarrow\infty$. By Proposition 1.17 of Chapter VI in \cite{P10}, there exists a subsequence $\{\hat{\xi}_{n_k}\}_{k=1}^\infty$ such that,
	\begin{displaymath}
	\xi=\lim_{k\rightarrow\infty} \hat{\xi}_{n_k},\ \ q.s.
	\end{displaymath}
	Since for any $k$, $\hat{\xi}_{n_k}\leq \eta$, we have $\xi\leq \eta$, q.s.
\end{proof}

\begin{remark}
	Consider the example in Remark \ref{esse2}. Set $\widetilde{\mathcal{H}}:=\{f_\alpha(\langle B\rangle_1),\alpha\in[1,2]\cap \mathbb{Q}\}$. It is easy to check that $\widetilde{\mathcal{H}}$ is a countable dense subset of $\mathcal{H}$. Then $\eta:=\sup_{\xi\in\widetilde{\mathcal{H}}}\xi=f_1(\langle B\rangle_1)$ is the essential supremum of $\mathcal{H}$.
\end{remark}

\begin{proposition}\label{essp4}
	Assume that $\mathcal{H}\subset L_G^{1^*}(\Omega)$ is upwards directed and
	\begin{displaymath}
	\sup_{\xi\in\mathcal{H}}\hat{\mathbb{E}}[\xi]=\sup_{\xi\in\mathcal{H}}-\hat{\mathbb{E}}[-\xi].
	\end{displaymath}
	Then the essential supremum of $\mathcal{H}$ exists.
\end{proposition}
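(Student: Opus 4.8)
The plan is to imitate the classical construction of the essential supremum of an upward directed family: exhibit a countable maximizing sequence whose pointwise supremum is the candidate, and then use the standing hypothesis to dominate an arbitrary element of $\mathcal{H}$ by this countable object. I would set $c:=\sup_{\xi\in\mathcal{H}}\hat{\mathbb{E}}[\xi]=\sup_{\xi\in\mathcal{H}}(-\hat{\mathbb{E}}[-\xi])$ and choose $\{\beta_m\}\subset\mathcal{H}$ with $-\hat{\mathbb{E}}[-\beta_m]\to c$. The point of the hypothesis is that a maximizing sequence for the lower expectation is automatically one for the upper expectation: sublinearity gives $-\hat{\mathbb{E}}[-\beta_m]\le\hat{\mathbb{E}}[\beta_m]\le c$, so $\hat{\mathbb{E}}[\beta_m]\to c$ as well. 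Using that $\mathcal{H}$ is upwards directed I would pass to an increasing sequence, choosing $\xi_m\in\mathcal{H}$ recursively with $\xi_m\ge\xi_{m-1}\vee\beta_m$ q.s.; then $\xi_m\uparrow$ q.s. while still $\hat{\mathbb{E}}[\xi_m]\to c$ and $-\hat{\mathbb{E}}[-\xi_m]\to c$. I would set $\eta:=\sup_m\xi_m\in L^0(\Omega)$ and claim this is the essential supremum.

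Condition (ii) of Definition \ref{ess} is the easy half: any $\eta'\in\mathcal{L}(\Omega)$ dominating every element of $\mathcal{H}$ dominates each $\xi_m$, and then $c(\eta'<\eta)\le\sum_m c(\eta'<\xi_m)=0$ by countable subadditivity of the capacity, just as in Proposition \ref{essp2}. The content is condition (i): that the single countable object $\eta$ dominates an arbitrary $\xi\in\mathcal{H}$ q.s. For fixed $\xi$ I would invoke upward directedness again to pick $\zeta_m\in\mathcal{H}$ with $\zeta_m\ge\xi\vee\xi_m$, so that $\zeta_m-\xi_m\ge(\xi-\xi_m)^+\ge0$.

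Subadditivity then gives $\hat{\mathbb{E}}[(\xi-\xi_m)^+]\le\hat{\mathbb{E}}[\zeta_m-\xi_m]\le\hat{\mathbb{E}}[\zeta_m]+\hat{\mathbb{E}}[-\xi_m]\le c+\hat{\mathbb{E}}[-\xi_m]\to c-c=0$. Since $\xi_m\uparrow\eta$, the nonnegative sequence $(\xi-\xi_m)^+$ decreases to $(\xi-\eta)^+$, so by monotonicity $\hat{\mathbb{E}}[(\xi-\eta)^+]\le\hat{\mathbb{E}}[(\xi-\xi_m)^+]\to0$, whence $\hat{\mathbb{E}}[(\xi-\eta)^+]=0$ and therefore $(\xi-\eta)^+=0$ q.s., i.e. $\xi\le\eta$ q.s. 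I would also record that $\zeta_m-\xi_m$ and $(\xi-\eta)^+$ are dominated by $|\xi|+|\xi_1|\in\mathbb{L}^1(\Omega)$, so every $\hat{\mathbb{E}}$ written above is finite and the implication $\hat{\mathbb{E}}[|Z|]=0\Rightarrow Z=0$ q.s. applies.

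The step I expect to be the crux is the estimate $\hat{\mathbb{E}}[\zeta_m-\xi_m]\to0$: this is exactly where the hypothesis $\sup\hat{\mathbb{E}}[\xi]=\sup(-\hat{\mathbb{E}}[-\xi])$ is used, forcing the upper and lower expectations of the maximizing sequence to the common value $c$. In the classical single-measure case this gap vanishes for free, but under $G$-expectation it is precisely the stated condition that makes it possible for a countably generated $\eta$ to dominate the whole (possibly uncountable) family. The one subtlety is that the cancellation $c+\hat{\mathbb{E}}[-\xi_m]\to0$ uses $c<\infty$; since the condition carries no information when $c=+\infty$ (both sides being infinite), I expect the intended regime to be $c<\infty$, and I would either assume this outright or reduce to it via a truncation argument.
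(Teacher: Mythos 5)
Your proof is correct and follows essentially the same route as the paper: both extract a countable increasing maximizing sequence from upward directedness, use the hypothesis $\sup_{\xi\in\mathcal{H}}\hat{\mathbb{E}}[\xi]=\sup_{\xi\in\mathcal{H}}(-\hat{\mathbb{E}}[-\xi])$ together with sublinearity to drive the $G$-expectation of the excess ($(\xi-\xi_m)^+$ in your version, $\xi\vee\eta-\eta$ in the paper's) to zero, and conclude quasi-sure domination. The only cosmetic differences are that the paper combines two maximizing sequences via $\xi_n^1\vee\xi_n^2$ and first passes to the limit $\eta$ (via the monotone convergence property, Proposition 28 (7) of \cite{HP13}) before invoking additivity for mean-certain random variables, whereas you keep the estimate at finite $m$ and only need monotonicity of $\hat{\mathbb{E}}$ to pass to the limit — a slight streamlining; your closing remark that the cancellation requires $c<\infty$ flags an implicit assumption the paper also makes.
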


\begin{proof}
	Since the family $\mathcal{H}$ is upwards directed, there exist two increasing sequences $\{\xi^i_n,n\in\mathbb{N}\}\subset \mathcal{H}$, $i=1,2$, such that
	\begin{equation}\label{ess2}
	\lim_{n\rightarrow \infty}\hat{\mathbb{E}}[\xi^1_n]=\sup_{\xi\in\mathcal{H}}\hat{\mathbb{E}}[\xi],\ \
	\lim_{n\rightarrow \infty}-\hat{\mathbb{E}}[-\xi^2_n]=\sup_{\xi\in\mathcal{H}}-\hat{\mathbb{E}}[-\xi].
	\end{equation}
	We claim that $\eta:=\sup_{n}\eta_n$ is the essential supremum of $\mathcal{H}$, where $\eta_n=\xi^1_n \vee\xi_n^2$. Obviously, the second statement in Definition \ref{ess} holds true. We now prove the first statement. It is easy to check that
	\begin{align*}
	&\sup_{\xi\in\mathcal{H}}\hat{\mathbb{E}}[\xi]\geq \hat{\mathbb{E}}[\eta_n]\geq \hat{\mathbb{E}}[\xi^1_n],\\
	&\sup_{\xi\in\mathcal{H}}-\hat{\mathbb{E}}[-\xi]\geq -\hat{\mathbb{E}}[-\eta_n]\geq -\hat{\mathbb{E}}[-\xi^2_n].
	\end{align*}
	Letting $n\rightarrow\infty$, it follows that
	\begin{displaymath}
	\sup_{\xi\in\mathcal{H}}\hat{\mathbb{E}}[\xi]=\lim_{n\rightarrow\infty}\hat{\mathbb{E}}[\eta_n]
	=\lim_{n\rightarrow\infty}-\hat{\mathbb{E}}[-\eta_n]=\sup_{\xi\in\mathcal{H}}-\hat{\mathbb{E}}[-\xi].
	\end{displaymath}
	Applying Proposition 28 (7) in \cite{HP13}, we have
	\begin{displaymath}
	\hat{\mathbb{E}}[\eta]=\lim_{n\rightarrow\infty}\hat{\mathbb{E}}[\eta_n], \ \ -\hat{\mathbb{E}}[-\eta]=\lim_{n\rightarrow\infty}-\hat{\mathbb{E}}[-\eta_n],
	\end{displaymath}
	which implies that $\eta$ has no mean uncertainty. For any $\xi\in\mathcal{H}$, using the monotone convergence theorem, we get that
	\begin{displaymath}
	\hat{\mathbb{E}}[\eta\vee \xi]=\lim_{n\rightarrow\infty}\hat{\mathbb{E}}[\eta_n\vee\xi]\leq \sup_{\xi\in\mathcal{H}}\hat{\mathbb{E}}[\xi].
	\end{displaymath}
	Then we conclude that
	\begin{displaymath}
	0\leq \hat{\mathbb{E}}[\xi\vee \eta-\eta]=\hat{\mathbb{E}}[\xi\vee\eta]-\hat{\mathbb{E}}[\eta]\leq 0,
	\end{displaymath}
	which indicates that $\xi\vee \eta-\eta=0$, q.s. The proof is complete.
\end{proof}

In the following, we list some properties of the extended (conditional) $G$-expectation. It is natural to extend the definition of $G$-expectation $\hat{\mathbb{E}}$ to the space $\mathcal{L}(\Omega)$, still denoted by $\hat{\mathbb{E}}$. For each $X\in\mathcal{L}(\Omega)$, the extended $G$-expectation has the following representation
\[\hat{\mathbb{E}}[X]=\sup_{P\in\mathcal{P}}E^P[X].\]

\begin{lemma}\label{fatou}
	Let $\{X_n,n\in \mathbb{N}\}\subset \mathcal{L}(\Omega)$. Suppose that there exists a random variable $Y\in \mathcal{L}(\Omega)$ with $-\hat{\mathbb{E}}[-Y]>-\infty$ such that, for any $n\geq 1$, $X_n\geq Y$ q.s. Then $\liminf_{n\rightarrow \infty}X_n\in \mathcal{L}(\Omega)$ and
	\begin{displaymath}
		\hat{\mathbb{E}}[\liminf_{n\rightarrow \infty}X_n]\leq \liminf_{n\rightarrow \infty}\hat{\mathbb{E}}[X_n].
	\end{displaymath}
\end{lemma}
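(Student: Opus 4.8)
The plan is to reduce the statement to the classical (generalized) Fatou lemma under each single prior $P\in\mathcal{P}$ and then recombine the estimates through the representation $\hat{\mathbb{E}}[\cdot]=\sup_{P\in\mathcal{P}}E^P[\cdot]$, which is available on $\mathcal{L}(\Omega)$. First I would translate the hypotheses into probabilistic language. Since $Y\in\mathcal{L}(\Omega)$ we have $-Y\in\mathcal{L}(\Omega)$ and $-\hat{\mathbb{E}}[-Y]=-\sup_{P\in\mathcal{P}}E^P[-Y]=\inf_{P\in\mathcal{P}}E^P[Y]$, so the assumption $-\hat{\mathbb{E}}[-Y]>-\infty$ reads $\inf_{P\in\mathcal{P}}E^P[Y]>-\infty$; in particular $E^P[Y]>-\infty$, equivalently $E^P[Y^-]<\infty$, for every $P\in\mathcal{P}$. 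Moreover $X_n\geq Y$ q.s. means $X_n\geq Y$ holds $P$-a.s. simultaneously for all $P\in\mathcal{P}$, and since $Y^-$ is $P$-integrable we have $Y>-\infty$ $P$-a.s., so each difference $X_n-Y$ is unambiguously defined and nonnegative q.s.

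Second, I would check the membership claim $\liminf_{n\to\infty}X_n\in\mathcal{L}(\Omega)$. As a countable $\liminf$ of $\mathcal{B}(\Omega)$-measurable maps it lies in $L^0(\Omega)$, and from $X_n\geq Y$ q.s. we obtain $\liminf_{n\to\infty}X_n\geq Y$ q.s.; hence its negative part is dominated q.s. by $Y^-$, which is $P$-integrable for each $P$. Consequently $E^P[\liminf_{n\to\infty}X_n]$ is well defined (finite or $+\infty$) for every $P\in\mathcal{P}$, which is exactly the requirement that $\liminf_{n\to\infty}X_n\in\mathcal{L}(\Omega)$.

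Third, I would fix $P\in\mathcal{P}$ and apply the classical Fatou lemma to the nonnegative sequence $X_n-Y$, or equivalently invoke the generalized Fatou lemma with the integrable-from-below minorant $Y$, to get $E^P[\liminf_{n\to\infty}X_n]\leq\liminf_{n\to\infty}E^P[X_n]$. Bounding $E^P[X_n]\leq\sup_{Q\in\mathcal{P}}E^Q[X_n]=\hat{\mathbb{E}}[X_n]$ for each $n$ then gives $E^P[\liminf_{n\to\infty}X_n]\leq\liminf_{n\to\infty}\hat{\mathbb{E}}[X_n]$. Since the right-hand side does not depend on $P$, taking the supremum over $P\in\mathcal{P}$ on the left and using the representation of $\hat{\mathbb{E}}$ on $\mathcal{L}(\Omega)$ yields $\hat{\mathbb{E}}[\liminf_{n\to\infty}X_n]\leq\liminf_{n\to\infty}\hat{\mathbb{E}}[X_n]$, which is the assertion.

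The only genuinely delicate points, which I would treat carefully rather than as routine computation, are the bookkeeping with the extended values $\pm\infty$ — making sure $X_n-Y$ is well defined q.s. and that $E^P[\liminf_{n\to\infty}X_n]$ exists so that the representation applies — and the justification that the single-prior Fatou inequality can be invoked uniformly with one common minorant $Y$. The role of the hypothesis $-\hat{\mathbb{E}}[-Y]>-\infty$ is precisely to secure $E^P[Y^-]<\infty$ for every $P\in\mathcal{P}$ at once, which is what licenses the pointwise application of Fatou under each prior; I expect this uniform-integrability-from-below step to be the main thing to verify, while the rest is a direct passage from the pointwise estimates to the supremum.
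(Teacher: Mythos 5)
Your proposal is correct and follows essentially the same route as the paper: reduce to the classical (generalized) Fatou lemma under each fixed prior $P\in\mathcal{P}$, using the common minorant $Y$ whose negative part is $P$-integrable for every $P$ because $-\hat{\mathbb{E}}[-Y]=\inf_{P\in\mathcal{P}}E^P[Y]>-\infty$, and then pass to the supremum over $\mathcal{P}$ via the representation of the extended $G$-expectation. The paper's proof is just a terser version of the same argument; your additional bookkeeping on well-definedness of $E^P[\liminf_n X_n]$ and membership in $\mathcal{L}(\Omega)$ is exactly the content the paper leaves implicit.
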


\begin{proof}
	By the classical monotone convergence theorem and Fatou's Lemma, we have for each $P\in \mathcal{P}$, $E^P[\liminf_{n\rightarrow \infty}X_n]$ exists  and
	\begin{displaymath}
		E^P[\liminf_{n\rightarrow \infty}X_n]\leq \liminf_{n\rightarrow \infty}E^P[X_n]\leq \liminf_{n\rightarrow \infty}\hat{\mathbb{E}}[X_n].
	\end{displaymath}
	Taking supremum over all $P\in \mathcal{P}$, we get the desired result.
\end{proof}

\begin{remark}
	It is worth pointing out that the Fatou Lemma of the``$\limsup$" type does not hold under $G$-expectation. For example, set $0<\underline{\sigma}^2<\bar{\sigma}^2=1$. Consider the sequence $\{X_n,n\in\mathbb{N}\}$, where $X_n=I_{\{\langle B\rangle_1 \in(1-\frac{1}{n},1)\}}$. It is easy to check that $X_n\rightarrow 0$ and $\hat{\mathbb{E}}[X_n]=1$ for any $n\in\mathbb{N}$. Therefore, we have
	\begin{displaymath}
		0=\hat{\mathbb{E}}[\limsup_{n\rightarrow \infty}X_n]<\limsup_{n\rightarrow \infty}\hat{\mathbb{E}}[X_n]=1.
	\end{displaymath}
\end{remark}

In fact, for any given $X\in \bar{L}_G^{1^*_*}(\Omega)$, the supremum of expectation over all probability $P\in\mathcal{P}$ is attained.
\begin{proposition}\label{os22}
	For any $X\in \bar{L}_G^{1^*_*}(\Omega)$, there exists some $P\in \mathcal{P}$, such that
	\begin{displaymath}
	\hat{\mathbb{E}}[X]=E^{P}[X].
	\end{displaymath}
\end{proposition}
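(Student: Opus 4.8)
The plan is to work from the representation $\hat{\mathbb{E}}[\,\cdot\,]=\sup_{P\in\mathcal{P}}E^{P}[\,\cdot\,]$ on $\mathcal{L}(\Omega)$ together with the weak compactness of $\mathcal{P}$, and to climb the tower $L_G^1\subset L_G^{1^*}\subset L_G^{1^*_*}\subset\bar{L}_G^{1^*_*}$, upgrading the supremum to a maximum at each level. The base case $X\in L_G^1(\Omega)$ is standard: $P\mapsto E^P[X]$ is weakly continuous on $\mathcal{P}$, hence attains its maximum on the weakly compact set $\mathcal{P}$. The first real step is $X\in L_G^{1^*}(\Omega)$: choosing $X_n\downarrow X$ with $X_n\in L_G^1$, the classical dominated convergence under each fixed $P$ gives $E^P[X]=\inf_n E^P[X_n]$, so $P\mapsto E^P[X]$ is an infimum of continuous functionals and therefore upper semicontinuous; an upper semicontinuous functional on the weakly compact set $\mathcal{P}$ attains its maximum, which disposes of this case.

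For $X\in L_G^{1^*_*}(\Omega)$ I would pick $Y_m\uparrow X$ with $Y_m\in L_G^{1^*}$ and, by the previous step, measures $Q_m\in\mathcal{P}$ with $E^{Q_m}[Y_m]=\hat{\mathbb{E}}[Y_m]$. Since $X\ge Y_m$ and $\hat{\mathbb{E}}[Y_m]\uparrow\hat{\mathbb{E}}[X]$ (upward monotone convergence, which is immediate because $\sup_m\sup_P=\sup_P\sup_m$), the chain $\hat{\mathbb{E}}[X]\ge E^{Q_m}[X]\ge E^{Q_m}[Y_m]=\hat{\mathbb{E}}[Y_m]$ shows that $\{Q_m\}$ is a maximizing sequence for $X$; by weak compactness I extract a cluster point $P^*$ and aim to prove $E^{P^*}[X]=\hat{\mathbb{E}}[X]$. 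The case $X\in\bar{L}_G^{1^*_*}(\Omega)$ is then reduced to the previous one: approximating $X$ in $\hat{\mathbb{E}}[|\cdot|]$ by $X_n\in L_G^{1^*_*}$, the bound $\sup_{P}|E^P[X]-E^P[X_n]|\le\hat{\mathbb{E}}[|X-X_n|]\to0$ shows that the attainers of the $X_n$ form a maximizing sequence for $X$, and one further weak extraction brings us back to the same limiting assertion.

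The hard part is exactly this limit passage, and it is genuinely delicate: for $X\in L_G^{1^*_*}$ the functional $P\mapsto E^P[X]=\sup_m E^P[Y_m]$ is only \emph{lower} semicontinuous (a supremum of the upper semicontinuous maps $E^{\cdot}[Y_m]$), not upper semicontinuous, because the reverse ``$\limsup$'' Fatou inequality fails under $\hat{\mathbb{E}}$ — equivalently, $\hat{\mathbb{E}}[X-Y_m]$ need not tend to $0$. Hence one cannot merely let $Q_m\rightharpoonup P^*$ and pass to the limit, and in fact an \emph{arbitrary} maximizing sequence may converge weakly to a non-maximizer, so abstract semicontinuity is insufficient. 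The resolution I would pursue is to keep the lower bound $E^{P^*}[Y_k]\ge\limsup_m E^{Q_m}[Y_k]$ (usc of $Y_k\in L_G^{1^*}$) for each fixed $k$, and to control $\liminf_m E^{Q_m}[X-Y_k]$ by combining Lemma \ref{fatou} (for integrability of the $\liminf$ and the liminf inequality) with the uniform integrability of $X$ over $\mathcal{P}$ afforded by $X\in\mathbb{L}^1(\Omega)$, and then to send $k\to\infty$. Turning the failure of downward monotone convergence into an honest attainment statement — i.e. showing that the weak limit of the carefully handled lower-approximant maximizers really does realize $\hat{\mathbb{E}}[X]$, which appears to need the finer regularity of the representing family $\mathcal{P}$ from \cite{HP13} rather than semicontinuity alone — is the crux of the proof.
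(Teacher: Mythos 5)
Your overall architecture is the same as the paper's: climb the tower $L_G^1\subset L_G^{1^*}\subset L_G^{1^*_*}\subset\bar L_G^{1^*_*}$, use weak compactness of $\mathcal{P}$ to extract a cluster point of a maximizing sequence, and reduce the top level to $L_G^{1^*_*}$ via $\sup_{P}|E^P[X]-E^P[X_m]|\le\hat{\mathbb{E}}[|X-X_m|]\to0$. Your treatment of $L_G^{1^*}$ (decreasing approximants from $L_G^1$, monotone convergence under each $P$, hence an infimum of weakly continuous functionals, hence upper semicontinuity and attainment on the compact set $\mathcal{P}$) is exactly the content of the paper's inequality \eqref{re27} at that level, which is proved there by sandwiching with $X_m$ and invoking Lemma 1.29 of Chapter VI in \cite{P10}.

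The problem is that the proposal stops at precisely the step that carries all the weight: you never show that the weak cluster point $P^*$ of your maximizing sequence $\{Q_m\}$ satisfies $E^{P^*}[X]=\hat{\mathbb{E}}[X]$ for $X\in L_G^{1^*_*}(\Omega)$. What you offer instead is a list of tools you ``would pursue'': but Lemma \ref{fatou} only bounds $\hat{\mathbb{E}}$ of a $\liminf$ of random variables from above by a $\liminf$ of $\hat{\mathbb{E}}$'s and says nothing about $\liminf_mE^{Q_m}[X]$ versus $E^{P^*}[X]$; ``uniform integrability over $\mathcal{P}$'' does not follow from $X\in\mathbb{L}^1(\Omega)$; and the appeal to ``finer regularity of $\mathcal{P}$'' is not an argument. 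So as written this is a plan with its central lemma missing, not a proof. (Incidentally, a supremum of upper semicontinuous maps is not lower semicontinuous in general, so even your semicontinuity bookkeeping at this level is off.) For comparison, the paper closes this step by asserting \eqref{re27} also for $X\in L_G^{1^*_*}$, writing $E^{P_n}[X]=\liminf_mE^{P_n}[X_m]$ for increasing $X_m\in L_G^{1^*}$ and then interchanging $\limsup_n$ with $\liminf_m$. You are right to be suspicious of upper semicontinuity here: the interchange $\limsup_n\liminf_ma_{nm}\le\liminf_m\limsup_na_{nm}$ is not a valid general inequality (try $a_{nm}=I_{\{m\ge n\}}$), and for $X=I_{\{\langle B\rangle_1<1\}}\in L_G^{*1}\subset L_G^{1^*_*}$ with $\bar\sigma^2=1$ and $P_n$ of constant volatility $1-1/n$ one has $\limsup_nE^{P_n}[X]=1>0=E^P[X]$. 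But correctly identifying the obstruction is not the same as overcoming it; until the limit passage for $L_G^{1^*_*}$ is actually carried out (by whatever correct means), the proposal does not establish the proposition.
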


\begin{proof}
	We first claim that for any $X\in L_G^{1^*_*}(\Omega)$, if $P_n\rightarrow P$ weakly, then we have
	\begin{equation}\label{re27}
	\limsup_{n\rightarrow\infty}E^{P_n}[X]\leq E^P[X].
	\end{equation}
	We first prove Equation \eqref{re27} for any $X\in L_G^{1^*}(\Omega)$. Note that there exists a sequence of random variables $\{X_m\}\subset L_G^1(\Omega)$ such that $X_m\downarrow X$, q.s. Then for any $m\in\mathbb{N}$, we have $E^{P_n}[X]\leq E^{P_n}[X_m]$. Applying Lemma 1.29 of Chapter VI in \cite{P10}, it follows that
	\begin{displaymath}
	\limsup_{n\rightarrow\infty}E^{P_n}[X]\leq \limsup_{n\rightarrow\infty}E^{P_n}[X_m]=E^P[X_m].
	\end{displaymath}
	Letting $m\rightarrow\infty$, by the monotone convergence theorem, Equation \eqref{re27} holds. For any $X\in L_G^{1^*_*}(\Omega)$, there exists some $\{X_m\}\subset L_G^{1^*}(\Omega)$, such that $X_m\uparrow X$, q.s. By monotone convergence theorem, it is easy to check that
	\begin{align*}
	\limsup_{n\rightarrow\infty}E^{P_n}[X]&=\limsup_{n\rightarrow\infty}\liminf_{m\rightarrow \infty}E^{P_n}[X_m]\leq \liminf_{m\rightarrow \infty}\limsup_{n\rightarrow\infty}E^{P_n}[X_m]\\
	&\leq \liminf_{m\rightarrow \infty}E^P[X_m]=E^{P}[X].
	\end{align*}
	Now for any $X\in \bar{L}_G^{1^*_*}(\Omega)$, there exists a sequence of random variables $\{X_m\}\in {L}_G^{1^*_*}(\Omega)$ such that
	\begin{displaymath}
	\lim_{m\rightarrow \infty}\sup_{P\in\mathcal{P}}E^P{|X_m-X|}=\lim_{m\rightarrow \infty}
	\hat{\mathbb{E}}[|X_m-X|]=0.
	\end{displaymath}
	Then for any $\varepsilon>0$, there exists some $M$ independent of $P$, such that, for any $m\geq M$, $E^P[X]\leq E^P[X_m]+\varepsilon$. By the definition of extended $G$-expectation, we can choose a sequence of probability measures $\{P_n\}$, such that
	\begin{displaymath}
	\hat{\mathbb{E}}[X]=\lim_{n\rightarrow \infty}E^{P_n}[X].
	\end{displaymath}
	Noting that $\mathcal{P}$ is weakly compact, without loss of generality, we may assume that $P_n$ converges to $P$ weakly. We can calculate that
	\begin{displaymath}
	\hat{\mathbb{E}}[X]=\lim_{n\rightarrow \infty}E^{P_n}[X]\leq \limsup_{n\rightarrow \infty}E^{P_n}[X_m]+\varepsilon\leq E^P[X_m]+\varepsilon\rightarrow E^P[X]+\varepsilon, \textrm{ as } m\rightarrow\infty.	
	\end{displaymath}
	Since $\varepsilon$ can be arbitrarily small, we get the desired result.
\end{proof}

Now we extend the definition of conditional $G$-expectation. For this purpose, we need the following lemma, which generalizes Lemma 2.4 in \cite{HJPS1}.
\begin{lemma}\label{os7}
	For each $\xi,\eta\in L_G^{*1}(\Omega)$ and $A\in \mathcal{B}(\Omega_t)$, if $\xi I_A\geq \eta I_A$ q.s., then $\hat{\mathbb{E}}_t[\xi]I_A\geq \hat{\mathbb{E}}_t[\eta]I_A$ q.s.
\end{lemma}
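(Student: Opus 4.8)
The plan is to take Lemma 2.4 of \cite{HJPS1}, which is precisely this statement for $\xi,\eta\in L_G^1(\Omega)$, as the base case and to lift it first to the class $L_G^{1^*}(\Omega)$ and then to the linear space $L_G^{*1}(\Omega)$. The enabling observation is that although $L_G^{1^*}(\Omega)$ is not a linear space, it is stable under addition and the lattice operations: if $X_n\downarrow X$ and $Y_n\downarrow Y$ q.s. with $X_n,Y_n\in L_G^1(\Omega)$, then $X_n+Y_n\downarrow X+Y$, $X_n\vee Y_n\downarrow X\vee Y$ and $X_n\wedge Y_n\downarrow X\wedge Y$ q.s., and each limit again lies in $L_G^{1^*}(\Omega)$. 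Since $\hat{\mathbb{E}}_t$ on $L_G^{1^*}(\Omega)$ is the decreasing limit $\hat{\mathbb{E}}_t[X]=\lim_n\hat{\mathbb{E}}_t[X_n]$, every monotone passage to the limit below stays inside the relevant class.

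For $\xi,\eta\in L_G^{1^*}(\Omega)$ I would fix $\xi_k\downarrow\xi$ and $\eta_j\downarrow\eta$ in $L_G^1(\Omega)$. The naive step of applying Lemma 2.4 to $\xi_k$ and $\eta_j$ fails, since the hypothesis $\xi I_A\geq\eta I_A$ does not transfer to the approximants. I would route through $\eta_j\wedge\xi_k\in L_G^1(\Omega)$: as $\eta_j\wedge\xi_k\leq\xi_k$ globally, monotonicity of $\hat{\mathbb{E}}_t$ gives $\hat{\mathbb{E}}_t[\xi_k]I_A\geq\hat{\mathbb{E}}_t[\eta_j\wedge\xi_k]I_A$, and letting $j\to\infty$ then $k\to\infty$ along the decreasing limits yields $\hat{\mathbb{E}}_t[\xi]I_A\geq\hat{\mathbb{E}}_t[\eta\wedge\xi]I_A$. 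Because $\xi I_A\geq\eta I_A$ q.s. means $\eta\wedge\xi=\eta$ on $A$ q.s., the problem is reduced to the equality statement: if $\zeta,\theta\in L_G^{1^*}(\Omega)$ and $\zeta I_A=\theta I_A$ q.s., then $\hat{\mathbb{E}}_t[\zeta]I_A=\hat{\mathbb{E}}_t[\theta]I_A$ q.s.

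This equality-on-$A$ claim is the main obstacle, and it genuinely cannot be extracted from global monotonicity plus approximation: the free inequality $\eta\wedge\xi\leq\eta$ only bounds $\hat{\mathbb{E}}_t[\eta\wedge\xi]$ from above, whereas the conclusion needs the reverse bound on $A$. I would therefore lift the locality mechanism of Lemma 2.4 itself, using the representation $\hat{\mathbb{E}}_t[\cdot]=\esssup_{P}E^{P}_t[\cdot]$ of the conditional $G$-expectation, where $E^{P}_t[\cdot]=E^P[\cdot\,|\mathcal{F}_t]$ and the essential supremum runs over the measures in $\mathcal{P}$ agreeing with a fixed $P$ up to time $t$. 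Since $A\in\mathcal{B}(\Omega_t)$ is $\mathcal{F}_t$-measurable, classical conditioning is local, so $E^{P'}_t[\zeta]I_A=E^{P'}_t[\zeta I_A]I_A=E^{P'}_t[\theta I_A]I_A=E^{P'}_t[\theta]I_A$ for each such $P'$; as $I_A\geq0$, multiplication by $I_A$ commutes with the essential supremum, giving $\hat{\mathbb{E}}_t[\zeta]I_A=\hat{\mathbb{E}}_t[\theta]I_A$ $P$-a.s., hence q.s. The one point requiring care is justifying this representation for $\zeta,\theta\in L_G^{1^*}(\Omega)$, which I would obtain by monotone approximation from $L_G^1(\Omega)$, where it is classical. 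Combined with the previous step this settles the $L_G^{1^*}(\Omega)$ case, and in fact the same representation argument proves the inequality directly, bypassing the equality reduction, whenever the representation is already in force.

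Finally I would pass to the linear space $L_G^{*1}(\Omega)$ by writing $\xi=\xi'-\xi''$ and $\eta=\eta'-\eta''$ with $\xi',\xi'',\eta',\eta''\in L_G^{1^*}(\Omega)$ and rewriting the hypothesis as the inequality $(\xi'+\eta'')I_A\geq(\eta'+\xi'')I_A$ between two elements of $L_G^{1^*}(\Omega)$, to which the case just proved applies. The delicate point is that $\hat{\mathbb{E}}_t$ is only sublinear, so translating the conclusion back to $\hat{\mathbb{E}}_t[\xi]$ and $\hat{\mathbb{E}}_t[\eta]$ must be read off from the way the extended conditional $G$-expectation is defined on $L_G^{*1}(\Omega)$; here I would invoke the consistency of that construction as in \cite{HP13}, observing that once the representation $\hat{\mathbb{E}}_t=\esssup_{P}E^{P}_t$ holds on $L_G^{*1}(\Omega)$, the locality computation of the third paragraph applies verbatim to $\xi$ and $\eta$ and yields the assertion in a single step.
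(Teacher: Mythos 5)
Your proof is correct, but it takes a genuinely different route from the paper. The paper argues by contradiction: it picks a compact $K\subset A$ with $c(K)>0$ on which the conclusion would fail, approximates $I_K$ from above by continuous functions $\zeta_l$, writes $\xi-\eta=X-Y$ with $X,Y\in L_G^{1^*}(\Omega)$ built from the defining decreasing approximations, and passes to monotone limits in the consistency identity $\hat{\mathbb{E}}[\zeta_l(X_n-Y_m)^-]=\hat{\mathbb{E}}[\zeta_l\hat{\mathbb{E}}_t[(X_n-Y_m)^-]]$ to obtain $\hat{\mathbb{E}}[I_K\hat{\mathbb{E}}_t[(\xi-\eta)^-]]=\hat{\mathbb{E}}[I_K(\xi-\eta)^-]=0$, which contradicts $(\hat{\mathbb{E}}_t[\xi]-\hat{\mathbb{E}}_t[\eta])^-\leq\hat{\mathbb{E}}_t[(\xi-\eta)^-]$ being positive on $K$. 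You instead rest everything on the representation $\hat{\mathbb{E}}_t[\cdot]={\esssup_{Q\in\mathcal{P}(t,P)}}^P E^Q[\cdot\,|\mathcal{F}_t]$ of Proposition \ref{os19} (valid on $\bar{L}_G^{1^*_*}(\Omega)\supset L_G^{*1}(\Omega)$) together with the locality of classical conditional expectation and the fact that multiplication by $I_A\geq 0$ passes through the $P$-essential supremum; as you observe yourself, once that representation is in force the inequality follows in one step for $\xi,\eta\in L_G^{*1}(\Omega)$ directly, so your $\wedge$-reduction and the $L_G^{1^*}$/$L_G^{*1}$ bootstrapping become superfluous scaffolding (and indeed the decomposition step could not be closed by sublinearity alone, as you note). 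The trade-off is clear: your argument is shorter and more conceptual but outsources the aggregation and locality to the representation theorem of \cite{HP13}, whereas the paper's proof is self-contained, using only the monotone-limit construction of the extended conditional expectation and the tower-type identity inherited from \cite{HJPS1}; this keeps the lemma independent of Proposition \ref{os19} and in the spirit of a direct generalization of Lemma 2.4 in \cite{HJPS1}. The only point you should make fully explicit is the elementary patching argument showing that $\bigl({\esssup_Q}^P E^Q[\xi|\mathcal{F}_t]\bigr)I_A\geq\bigl({\esssup_Q}^P E^Q[\eta|\mathcal{F}_t]\bigr)I_A$ $P$-a.s.\ follows from the termwise inequality on $A$, for each fixed $P\in\mathcal{P}$, before concluding quasi-surely.
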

\begin{proof}
	Otherwise, we may choose a compact set $K\subset A$ with $c(K)>0$ such that $(\hat{\mathbb{E}}_t[\xi]-\hat{\mathbb{E}}_t[\eta])^->0$ on $K$. Noting that $K$ is compact, there exists a sequence of nonnegative functions $\{\zeta_n\}_{n=1}^\infty\subset C_b(\Omega_t)$ such that $\zeta_n\downarrow I_K$, which implies that $I_K\in L_G^{1^*}(\Omega_t)$. Since $\xi,\eta\in L_G^{*1}(\Omega)$, there exist $\xi_i,\eta_i\in L_G^{1^*}(\Omega)$ and $\{\xi_i^n\}_{n=1}^\infty,\{\eta_i^n\}_{n=1}^\infty \subset L_G^1(\Omega)$, $i=1,2$ such that $\xi_i^n\downarrow \xi$, $\eta_i^n\downarrow \eta$ and
	\begin{displaymath}
	\xi=\xi_1-\xi_2, \ \ \eta=\eta_1-\eta_2.
	\end{displaymath}
	Set $X_n=\xi_1^n+\eta_2^n$, $Y_n=\xi_2^n+\eta_1^n$. Then $\{X_n\}_{n=1}^\infty,\{Y_n\}_{n=1}^\infty\subset L_G^1(\Omega)$ and they are decreasing in $n$. We denote by $X,Y$ the limit of  $\{X_n\}_{n=1}^\infty,\{Y_n\}_{n=1}^\infty$ respectively. It is easy to check that $X,Y\in L_G^{1^*}(\Omega)$ and $\xi-\eta=X-Y$. For each fixed $l,m,n\in \mathbb{N}$, we have
	\begin{displaymath}
	\hat{\mathbb{E}}[\zeta_l(X_n-Y_m)^-]\downarrow\hat{\mathbb{E}}[I_K(X_n-Y_m)^-], \textrm{ as } l\rightarrow\infty,
	\end{displaymath}
	and
	\begin{displaymath}
	\hat{\mathbb{E}}[\zeta_l\hat{\mathbb{E}}_t[(X_n-Y_m)^-]]\downarrow\hat{\mathbb{E}}[I_K\hat{\mathbb{E}}_t[(X_n-Y_m)^-]], \textrm{ as } l\rightarrow\infty,
	\end{displaymath}
	Noting that
	\begin{displaymath}
	\hat{\mathbb{E}}[\zeta_l(X_n-Y_m)^-]= \hat{\mathbb{E}}[\zeta_l\hat{\mathbb{E}}_t[(X_n-Y_m)^-]],
	\end{displaymath}
	it follows that
	\begin{displaymath}
	\hat{\mathbb{E}}[I_K(X_n-Y_m)^-]=\hat{\mathbb{E}}[I_K\hat{\mathbb{E}}_t[(X_n-Y_m)^-]].
	\end{displaymath}
	For each fixed $m,n\in \mathbb{N}$, we have $I_K(X_n-Y_m)^-\in L_G^{1^*}(\Omega)$. First letting $m\rightarrow\infty$, we obtain $I_K(X_n-Y_m)^-\downarrow I_K(X_n-Y)^-$ and $I_K(X_n-Y)^-\in L_G^{1^*}(\Omega)$. Then letting $n\rightarrow\infty$, we obtain $I_K(X_n-Y)^-\uparrow I_K(X-Y)^-$ and $I_K(X-Y)^-\in L_G^{1^*_*}(\Omega)$. Therefore, we can calculate that
	\begin{displaymath}
	\lim_{n\rightarrow \infty}\lim_{m\rightarrow \infty}\hat{\mathbb{E}}[I_K(X_n-Y_m)^-]=\hat{\mathbb{E}}[I_K(X-Y)^-]=\hat{\mathbb{E}}[I_K(\xi-\eta)^-]=0.
	\end{displaymath}
	By a similar analysis, we have
	\begin{displaymath}
	\lim_{n\rightarrow \infty}\lim_{m\rightarrow \infty}\hat{\mathbb{E}}[I_K\hat{\mathbb{E}}_t[(X_n-Y_m)^-]]=\hat{\mathbb{E}}[I_K\hat{\mathbb{E}}_t[(X-Y)^-]]=\hat{\mathbb{E}}[I_K\hat{\mathbb{E}}_t[(\xi-\eta)^-]].
	\end{displaymath}
	By Proposition 34 (4) and (8) in \cite{HP13}, we can check that
	\begin{displaymath}
	(\hat{\mathbb{E}}_t[\xi]-\hat{\mathbb{E}}_t[\eta])^-\leq \hat{\mathbb{E}}_t[(\xi-\eta)^-],
	\end{displaymath}
	which yields $\hat{\mathbb{E}}_t[(\xi-\eta)^-]>0$ on $K$. Recall that $c(K)>0$ which implies that $\hat{\mathbb{E}}[I_K\hat{\mathbb{E}}_t[(\xi-\eta)^-]]>0$. This is a contradiction and the proof is complete.
\end{proof}

Lemma \ref{os7} allows us to extend the definition of conditional $G$-expectation. For each $t\geq 0$, set
\begin{displaymath}
L_G^{*1,0,t}(\Omega)=\{\xi=\sum_{i=1}^n \eta_i I_{A_i}:\{A_i\}_{i=1}^n \textrm{ is a partition of } \mathcal{B}(\Omega_t), \eta_i\in L_G^{*1}(\Omega),n\in \mathbb{N}\}.
\end{displaymath}

\begin{definition}
For each $\xi \in L_G^{*1,0,t}(\Omega)$ with representation $\xi=\sum_{i=1}^n \eta_i I_{A_i}$, we define the conditional expectation, still denoted by $\hat{\mathbb{E}}_s$, by setting
\begin{equation}\label{re15}
\hat{\mathbb{E}}_s[\xi]:=\sum_{i=1}^n \hat{\mathbb{E}}_s[\eta_i] I_{A_i}, \textrm{ for } s\geq t.
\end{equation}
\end{definition}

\begin{remark}
	If furthermore, $\xi\in \bar{L}_G^{1^*_*}(\Omega)\cap L_G^{*1,0,t}(\Omega)$, then the extended conditional $G$-expectation \eqref{re15} coincides with the one as in Proposition \ref{os19}. In fact, for any $\xi \in L_G^{*1,0,t}(\Omega)$ with representation $\xi=\sum_{i=1}^n \eta_i I_{A_i}$ and $s\geq t$, we can calculate that
	\begin{align*}
	\sum_{i=1}^n \hat{\mathbb{E}}_s[\eta_i] I_{A_i}=& \sum_{i=1}^n I_{A_i} {\esssup_{Q\in\mathcal{P}(s,P)}}^P E^Q[\eta_i|\mathcal{F}_s]
	={\esssup_{Q\in\mathcal{P}(s,P)}}^P (\sum_{i=1}^n E^Q[\eta_i|\mathcal{F}_s] I_{A_i})\\
	=&{\esssup_{Q\in\mathcal{P}(s,P)}}^P  E^Q[\sum_{i=1}^n\eta_i I_{A_i}|\mathcal{F}_s] ={\esssup_{Q\in\mathcal{P}(s,P)}}^P  E^Q[\xi|\mathcal{F}_s],\ \textrm{P-}a.s.,
	\end{align*}	
	for any $P\in\mathcal{P}$.
\end{remark}

\section{Optimal stopping in discrete-time case}
In this section, we study the optimal stopping problem under $G$-expectation for the discrete time case, i.e. the $G$-stopping time $\tau$ takes values in some discrete set. We first investigate the finite time case by applying the method of backward induction and then extend the results to the infinite time case.
\subsection{Finite time horizon case}

In this subsection, we need to assume the payoff process $X$ satisfies the following condition.
\begin{assumption}\label{a1}
	$\{X_n, n=0,1,\cdots, N\}$ is a sequence of random variables such that for any $n$, $X_n\in L_G^1(\Omega_n)$.
\end{assumption}

\begin{theorem}\label{os1}
	Let Assumption \ref{a1} hold true. We define the following sequence $\{V_n,n=0,1,\cdots,N\}$ by backward induction: Let $V_N=X_N$ and
	\begin{displaymath}
	V_n=\max\{X_n,\hat{\mathbb{E}}_n[V_{n+1}]\}, \ n\leq N-1.
	\end{displaymath}
	Then we have the following conclusion:
	\begin{description}
		\item[(1)] $\{V_n,n=0,1,\cdots,N\}$ is the smallest $G$-supermartingale dominating $\{X_n,n=0,1,\cdots,N\}$;
		\item[(2)] Denote by $\mathcal{T}_{j,N}$ the set of all $G$-stopping time taking values in $\{j,\cdots,N\}$. Set $\tau_{j}=\inf\{l\geq j: V_l=X_l\}$. Then $\tau_{j}$ is a $G$-stopping time and $V_{n\wedge \tau_j}\in L_G^{*1}(\Omega_n)$, for any $j\leq N$ and $n\leq N$. Furthermore, $\{V_{n\wedge \tau_j},n=0,1,\cdots,N\}$ is a $G$-martingale and for any $j\leq N$,
		\begin{displaymath}
		V_j=\hat{\mathbb{E}}_j[X_{\tau_j}]=\underset{\tau\in \mathcal{T}_{j,N}}{ess\sup}\textrm{ }\hat{\mathbb{E}}_j[X_\tau].
		\end{displaymath}
	\end{description}
\end{theorem}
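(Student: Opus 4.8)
The plan is to mirror the classical Snell-envelope argument, but to route every conditional expectation of a \emph{stopped} value through the extended conditional $G$-expectation on $L_G^{*1,0,n}(\Omega)$ and the localization principle of Lemma~\ref{os7}, since the stopped process will not lie in $L_G^1$.

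\textbf{Part (1).} First I would check by backward induction that $V_n\in L_G^1(\Omega_n)$ for every $n$: this holds at $n=N$ by Assumption~\ref{a1}, and the induction step uses that $\hat{\mathbb{E}}_n[\cdot]$ maps $L_G^1(\Omega_{n+1})$ into $L_G^1(\Omega_n)$ together with the fact that $L_G^1$ is a lattice, so $V_n=X_n\vee\hat{\mathbb{E}}_n[V_{n+1}]\in L_G^1(\Omega_n)$. The supermartingale inequality $\hat{\mathbb{E}}_n[V_{n+1}]\leq V_n$ and the domination $V_n\geq X_n$ are immediate from the definition of the maximum. For minimality, let $\{U_n\}$ be any $G$-supermartingale with $U_n\geq X_n$ q.s.; I would show $U_n\geq V_n$ q.s. by backward induction, the step being $U_n\geq\hat{\mathbb{E}}_n[U_{n+1}]\geq\hat{\mathbb{E}}_n[V_{n+1}]$ (supermartingale property plus monotonicity) and $U_n\geq X_n$, whence $U_n\geq X_n\vee\hat{\mathbb{E}}_n[V_{n+1}]=V_n$.

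\textbf{Part (2): stopping time and membership.} Writing $\{\tau_j\leq l\}=\bigcup_{k=j}^l\{V_k=X_k\}$ and noting $V_k\geq X_k$, I would represent $I_{\{V_k=X_k\}}=I_{\{V_k-X_k\leq 0\}}$ as the decreasing q.s.\ limit of $\phi_m(V_k-X_k)$ with $\phi_m(z)=(1-mz)^+$; since $V_k-X_k\in L_G^1(\Omega_k)$ and $\phi_m$ is bounded Lipschitz, each $\phi_m(V_k-X_k)\in L_G^1(\Omega_k)$, so $I_{\{V_k=X_k\}}\in L_G^{1^*}(\Omega_k)$. Taking finite maxima (which commute with the decreasing limits) gives $I_{\{\tau_j\leq l\}}\in L_G^{1^*}(\Omega_l)$, so $\tau_j$ is a $G$-stopping time. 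Decomposing $V_{n\wedge\tau_j}=V_nI_{\{\tau_j>n\}}+\sum_{k=j}^{n-1}V_kI_{\{\tau_j=k\}}$ with $I_{\{\tau_j=k\}}=I_{\{\tau_j\leq k\}}-I_{\{\tau_j\leq k-1\}}\in L_G^{*1}$ and using the closure properties of these spaces, I would conclude $V_{n\wedge\tau_j}\in L_G^{*1,0,n}(\Omega)\subset L_G^{*1}(\Omega_n)$, so that the extended conditional expectation applies to it.

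\textbf{Part (2): martingale property and value.} The engine is the flat-off property: on $\{\tau_j>n\}\subset\{V_n>X_n\}$ the maximum defining $V_n$ is attained at the second term, so $V_n=\hat{\mathbb{E}}_n[V_{n+1}]$ there, while on $\{\tau_j\leq n\}$ one has $(n+1)\wedge\tau_j=n\wedge\tau_j$. Splitting $\Omega$ along the $\mathcal{B}(\Omega_n)$-set $\{\tau_j>n\}$ and invoking Lemma~\ref{os7} together with formula \eqref{re15} to pull the indicator through the conditional expectation, I would obtain $\hat{\mathbb{E}}_n[V_{(n+1)\wedge\tau_j}]=V_{n\wedge\tau_j}$ for $n\geq j$, i.e.\ $\{V_{n\wedge\tau_j}\}_{n\geq j}$ is a $G$-martingale. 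Iterating this identity (time consistency) from $j$ to $N$ and using $V_{N\wedge\tau_j}=V_{\tau_j}=X_{\tau_j}$ yields $V_j=\hat{\mathbb{E}}_j[X_{\tau_j}]$. For the variational identity, I would first show that for an \emph{arbitrary} $\tau\in\mathcal{T}_{j,N}$ the stopped process $\{V_{n\wedge\tau}\}$ is a $G$-\emph{super}martingale by the same splitting but keeping the inequality on $\{\tau>n\}$, so that $\hat{\mathbb{E}}_j[X_\tau]\leq\hat{\mathbb{E}}_j[V_\tau]\leq V_j$; since $V_j=\hat{\mathbb{E}}_j[X_{\tau_j}]$ is itself of the form $\hat{\mathbb{E}}_j[X_\tau]$, Definition~\ref{ess} is satisfied and $V_j=\esssup_{\tau\in\mathcal{T}_{j,N}}\hat{\mathbb{E}}_j[X_\tau]$, the existence of the essential supremum being automatic because the supremum is attained at $\tau_j$.

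\textbf{Main obstacle.} The routine part is the backward-induction skeleton inherited from the classical Snell envelope. The genuine difficulty is entirely in the $G$-framework bookkeeping of Part (2): the stopped value $V_{n\wedge\tau_j}$ leaves $L_G^1$, so I must certify its membership in $L_G^{*1,0,n}(\Omega)$ and justify that the extended conditional expectation \eqref{re15} behaves well---linearity on this class, the pull-out of $\mathcal{B}(\Omega_n)$-indicators, and the tower/time-consistency needed to iterate---all of which rest on Lemma~\ref{os7}. Establishing the flat-off identity and the indicator pull-out \emph{quasi-surely} (rather than $P$-a.s.\ for a single fixed measure) is the step I expect to require the most care.
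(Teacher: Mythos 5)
Your proposal is correct and follows essentially the same route as the paper: backward induction for part (1); membership of $I_{\{V_k=X_k\}}$ in $L_G^{1^*}(\Omega_k)$ via decreasing approximation (the paper cites Remark \ref{o8} instead of constructing $\phi_m$ explicitly); a decomposition of $V_{n\wedge\tau_j}$ into $L_G^{*1}(\Omega_n)$ (the paper uses the telescoping form $\sum_{k=j}^{n-1}(V_k-V_{k+1})I_{\{\tau_j\leq k\}}+V_n$, which is equivalent to yours); the flat-off identity combined with Lemma \ref{os7} and \eqref{re15} for the martingale property; and the stopped-supermartingale/backward-iteration argument for optimality. One cosmetic caveat: the inclusion $L_G^{*1,0,n}(\Omega)\subset L_G^{*1}(\Omega_n)$ is not literally true as stated (elements of $L_G^{*1,0,n}(\Omega)$ need not be $\Omega_n$-measurable), but your decomposition already yields $V_{n\wedge\tau_j}\in L_G^{*1}(\Omega_n)$ directly since each $V_kI_{\{\tau_j=k\}}$ is a difference of products of $L_G^1(\Omega_n)$ variables with $L_G^{1^*}(\Omega_n)$ indicators.
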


\begin{proof}
	\textbf{(1)} It is easy to check that for any $n=0,1,\cdots,N$,
	\begin{displaymath}
	V_n\geq X_n, \textrm{ and } V_n\geq \hat{\mathbb{E}}_{n}[V_{n+1}],
	\end{displaymath}
	which implies $\{V_n, n=0,1,\cdots, N\}$ is a $G$-supermartingale dominating $\{X_n, n=0,1,\cdots, N\}$. If $\{U_n, n=0,1,\cdots, N\}$ is another $G$-supmartingale dominating $\{X_n, n=0,1,\cdots, N\}$, we have $U_N\geq X_N=V_N$ and
	\begin{displaymath}
	U_{N-1}\geq \hat{\mathbb{E}}_{N-1}[U_{N}]\geq \hat{\mathbb{E}}_{N-1}[V_{N}], \ U_{N-1}\geq X_{N-1}.
	\end{displaymath}
	It follows that $U_{N-1}\geq V_{N-1}$. By induction, we can prove that for all $n=0,1,\cdots,N$, $V_n\leq U_n$.
	
	\textbf{(2)} For any $n=j,\cdots,N$, we can check that
		\begin{displaymath}
	\{\tau_j\leq n\}=\cup_{k=j}^n \{\tau_{k}=k\}=\cup_{k=j}^n\{V_k=X_k\}.
	\end{displaymath}
	and
	\begin{displaymath}
	I_{\{\tau_j\leq n\}}=\max_{j\leq k\leq n} I_{\{V_k-X_k=0\}}.
	\end{displaymath}
	By Remark \ref{o8}, $I_{\{V_k-X_k=0\}}\in L_G^{1^*}(\Omega_k)$, for any $j\leq k\leq n$. It follows that $I_{\{\tau_j\leq n\}}\in  L_G^{1^*}(\Omega_n)$.
	
	It is easy to check that, for any $j<n\leq N$,
	\begin{equation}\label{re5}
	\begin{split}
	V_{n\wedge \tau_j}&=\sum_{k=j}^{n-1}(V_k-V_{k+1}) I_{\{\tau_j\leq k\}}+V_n\\
	&=\sum_{k=j}^{n-1}(V_k-V_{k+1})^+ I_{\{\tau_j\leq k\}}-\sum_{k=j}^{n-1}(V_k-V_{k+1})^- I_{\{\tau_j\leq k\}}+V_n
	.
	\end{split}
	\end{equation}
	We conclude that $V_{n\wedge \tau_j}\in L_G^{*1}(\Omega_n) $. Note that
	\begin{equation}\label{re6}
	V_{(n+1)\wedge \tau_j}-V_{n\wedge \tau_j}= I_{\{\tau_j\geq n+1\}}(V_{n+1}-V_n)=I_{\{\tau_j\leq n\}^c}(V_{n+1}-\hat{\mathbb{E}}_n[V_{n+1}]).
	\end{equation}
	Since $\{\tau_j\leq n\}\in \mathcal{B}(\Omega_n)$, applying Lemma \ref{os7} and equation \eqref{re15}, we have
	\begin{equation}\label{re7}
	\hat{\mathbb{E}}_n[I_{\{\tau_j\leq n\}^c}(V_{n+1}-\hat{\mathbb{E}}_n[V_{n+1}])]=I_{\{\tau_j\leq n\}^c}\hat{\mathbb{E}}_n[(V_{n+1}-\hat{\mathbb{E}}_n[V_{n+1}])]=0.
	\end{equation}
	Combining \eqref{re6} and \eqref{re7}, we can get
	\begin{displaymath}
	0=\hat{\mathbb{E}}_n[V_{(n+1)\wedge \tau_j}-V_{n\wedge \tau_j}]=\hat{\mathbb{E}}_n[V_{(n+1)\wedge \tau_j}]-V_{n\wedge \tau_j},
	\end{displaymath}
	which shows that $\{V_{n\wedge \tau_j},n=j,j+1,\cdots,N\}$ is a $G$-martingale. Consequently, we have
	\begin{displaymath}
	V_j=\hat{\mathbb{E}}_j[V_{\tau_j}]=\hat{\mathbb{E}}_j[X_{\tau_j}].
	\end{displaymath}
	
	We claim that, for any $\tau\in \mathcal{T}_{j,N}$,
	\begin{equation}\label{re8}
	V_j\geq \hat{\mathbb{E}}_j[X_\tau].
	\end{equation}
	First, similar with \eqref{re5}, we obtain $X_\tau\in L_G^{*1}(\Omega_N)$. We then calculate that
	\begin{align*}
	\hat{\mathbb{E}}_{N-1}[X_\tau]&\leq \hat{\mathbb{E}}_{N-1}[V_\tau]=\hat{\mathbb{E}}_{N-1}[\sum_{k=j}^{N-1}(V_k-V_{k+1}) I_{\{\tau\leq k\}}+V_N]\\
	&=\sum_{k=j}^{N-2}(V_k-V_{k+1}) I_{\{\tau\leq k\}}+V_{N-1}I_{\{\tau\leq N-1\}}+\hat{\mathbb{E}}_{N-1}[V_N I_{\{\tau=N\}}]\\
	&=\sum_{k=j}^{N-2}(V_k-V_{k+1}) I_{\{\tau\leq k\}}+V_{N-1}I_{\{\tau\leq N-1\}}+\hat{\mathbb{E}}_{N-1}[V_N ]I_{\{\tau=N\}}\\
	&\leq \sum_{k=j}^{N-2}(V_k-V_{k+1}) I_{\{\tau\leq k\}}+V_{N-1}=V_{(N-1)\wedge \tau},
	\end{align*}
	where we use Equation \eqref{re15} again in the last equality. Repeat this procedure, we get that \eqref{re8} holds. The proof is complete.
\end{proof}

\begin{remark}
	If $\{V_n\}_{n=0}^N$ is defined by
	\begin{displaymath}
    V_N=X_N, \ \ 	V_n=\min\{X_n,\hat{\mathbb{E}}_n[V_{n+1}]\}, \ n\leq N-1.
	\end{displaymath}
	By a similar analysis, we have the following conclusion:
	\begin{description}
		\item[(1)] $\{V_n,n=0,1,\cdots,N\}$ is the largest $G$-submartingale dominated by $\{X_n,n=0,1,\cdots,N\}$;
		\item[(2)] Set $\tau_{j}=\inf\{l\geq j: V_l=X_l\}$. Then $\tau_{j}$ is a $G$-stopping time and $V_{n\wedge \tau_j}\in L_G^{*1}(\Omega_n)$, for any $j\leq N$ and $n\leq N$. Furthermore, $\{V_{n\wedge \tau_j},n=0,1,\cdots,N\}$ is a $G$-martingale and for any $j\leq N$,
		\begin{displaymath}
		V_j=\hat{\mathbb{E}}_j[X_{\tau_j}]=\underset{\tau\in \mathcal{T}_{j,N}}{ess\inf}\textrm{ }\hat{\mathbb{E}}_j[X_\tau].
		\end{displaymath}
	\end{description}
	By Proposition \ref{os22}, there exists some $P\in\mathcal{P}$ such that \[V_0=\inf_{\tau\in \mathcal{T}_{0,N}}\sup_{P\in\mathcal{P}}E^P[X_\tau]=E^P[X_{\tau_0}].\]
\end{remark}

\subsection{Infinite time horizon case}

Now we study the infinite time case. The conditions on the payoff process are more restrictive compared with the finite time case mainly due to the fact that the order of the right-hand side of Doob's inequality under $G$-expectation is strictly larger than the one of the left-hand side.
\begin{assumption}\label{a2}
	$\{X_n,n\in\mathbb{N}\}$ is a sequence of random variables bounded from below and for any $n\in\mathbb{N}$, $X_n\in L_G^\beta(\Omega_n)$, where $\beta>1$. Furthermore,
	\begin{displaymath}
	\hat{\mathbb{E}}[\sup_{n\in \mathbb{N}}|X_n|^\beta]<\infty.
	\end{displaymath}
\end{assumption}

For each fixed $N\in\mathbb{N}$, we define the following sequence $\{\tilde{V}^N_n,n=0,1,\cdots,N\}$ by backward induction: Let $\tilde{V}^N_N=X_N$ and
\begin{equation}\label{re12}
\tilde{V}_n^N=\max\{X_n,\hat{\mathbb{E}}_n[\tilde{V}^N_{n+1}]\}, \ n\leq N-1.
\end{equation}
It is easy to check that for any $n\leq N\leq M$, $\tilde{V}_n^N\leq \tilde{V}_n^M$. We may define
\begin{equation}\label{re13}
\tilde{V}^\infty_n=\lim_{N\geq n,N\rightarrow \infty}\tilde{V}_n^N.
\end{equation}

\begin{proposition}\label{os5}
	The sequence $\{\tilde{V}_n^\infty,n\in\mathbb{N}\}$ defined by \eqref{re13} is the smallest $G$-supermartingale dominating the process $\{X_n,n\in\mathbb{N}\}$.
\end{proposition}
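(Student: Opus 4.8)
The plan is to verify the three defining properties in turn — integrability of the limit, the domination and $G$-supermartingale inequalities, and minimality — after first securing an $N$-uniform a priori bound that makes the increasing limit \eqref{re13} well posed. First I would note, by backward induction on the finite-horizon recursion \eqref{re12}, that for every $N\geq n$
\[
\tilde{V}_n^N \leq \hat{\mathbb{E}}_n\Big[\sup_{n\leq m\leq N}X_m\Big] \leq \hat{\mathbb{E}}_n\Big[\sup_{m\geq n}|X_m|\Big].
\]
Indeed $\tilde{V}_N^N=X_N$ satisfies this, and if it holds at level $n+1$ then the tower property, monotonicity of $\hat{\mathbb{E}}_n$, and $X_n\leq\sup_{m\geq n}|X_m|$ propagate it to level $n$. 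Assumption \ref{a2} gives $\sup_m|X_m|\in L_G^\beta(\Omega)\subset\mathbb{L}^1(\Omega)$, so the right-hand side is integrable and independent of $N$; combined with $\tilde{V}_n^N\geq\tilde{V}_n^n=X_n\in L_G^1(\Omega_n)$ this squeezes $\tilde{V}_n^N$ between two fixed $\mathbb{L}^1$ envelopes. Since the sequence is nondecreasing in $N$, the limit $\tilde{V}_n^\infty$ is finite q.s. and lies in $\mathbb{L}^1(\Omega_n)$; as each $\tilde{V}_n^N$ belongs to $L_G^1(\Omega_n)\subset L_G^{1^*}(\Omega_n)$ and increases to it, we conclude $\tilde{V}_n^\infty\in L_G^{1^*_*}(\Omega_n)$, the space on which the conditional $G$-expectation has been extended.

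Domination is then immediate: $\tilde{V}_n^N\geq X_n$ for every $N\geq n$ forces $\tilde{V}_n^\infty\geq X_n$ q.s. For the supermartingale property I would pass to the limit in the finite-horizon identity $\tilde{V}_n^N=\max\{X_n,\hat{\mathbb{E}}_n[\tilde{V}_{n+1}^N]\}$. Because $\tilde{V}_{n+1}^N\uparrow\tilde{V}_{n+1}^\infty$ with all terms in $L_G^{1^*}(\Omega_{n+1})$, the monotone convergence theorem for the conditional $G$-expectation (as used in the proof of Proposition \ref{essp4}, cf. \cite{HP13}) yields $\hat{\mathbb{E}}_n[\tilde{V}_{n+1}^N]\uparrow\hat{\mathbb{E}}_n[\tilde{V}_{n+1}^\infty]$. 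Letting $N\rightarrow\infty$ and using continuity of the maximum gives the recursion $\tilde{V}_n^\infty=\max\{X_n,\hat{\mathbb{E}}_n[\tilde{V}_{n+1}^\infty]\}$, which in particular delivers $\tilde{V}_n^\infty\geq\hat{\mathbb{E}}_n[\tilde{V}_{n+1}^\infty]$, i.e. the $G$-supermartingale property.

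Minimality I would prove by comparison against each finite horizon. Let $\{U_n,n\in\mathbb{N}\}$ be any $G$-supermartingale dominating $X$. Fixing $N$, a backward induction from $n=N$ downward shows $U_n\geq\tilde{V}_n^N$: at $n=N$ we have $U_N\geq X_N=\tilde{V}_N^N$, and if $U_{n+1}\geq\tilde{V}_{n+1}^N$ then monotonicity gives $\hat{\mathbb{E}}_n[U_{n+1}]\geq\hat{\mathbb{E}}_n[\tilde{V}_{n+1}^N]$, so the supermartingale inequality $U_n\geq\hat{\mathbb{E}}_n[U_{n+1}]$ together with $U_n\geq X_n$ yields $U_n\geq\max\{X_n,\hat{\mathbb{E}}_n[\tilde{V}_{n+1}^N]\}=\tilde{V}_n^N$. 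Letting $N\rightarrow\infty$ gives $U_n\geq\tilde{V}_n^\infty$, as required.

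I expect the only delicate point to be the first paragraph: confirming that the monotone limit is genuinely integrable and lands in $L_G^{1^*_*}(\Omega_n)$, so that $\hat{\mathbb{E}}_n[\tilde{V}_{n+1}^\infty]$ is meaningful and the monotone convergence step is legitimate. This is exactly where the strengthened hypothesis of Assumption \ref{a2} enters — the global envelope $\sup_m|X_m|\in L_G^\beta$ (with $\beta>1$, compensating for the mismatch of exponents in the $G$-Doob inequality noted before the assumption) is what furnishes the fixed integrable dominating variable absent in the finite-horizon setting. The two inductions, by contrast, are routine and parallel those in Theorem \ref{os1}.
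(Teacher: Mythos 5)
Your proposal is correct and follows essentially the same route as the paper: pass to the limit in the finite-horizon recursion \eqref{re12} via monotone convergence to obtain $\tilde{V}_n^\infty=\max\{X_n,\hat{\mathbb{E}}_n[\tilde{V}_{n+1}^\infty]\}$, and deduce minimality by comparing any dominating $G$-supermartingale with each $\tilde{V}_n^N$ (the paper simply cites Theorem \ref{os1} for that comparison rather than redoing the backward induction). Your first paragraph supplies the integrability and $L_G^{1^*_*}(\Omega_n)$ membership that the paper leaves implicit, which is a welcome but not divergent addition.
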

\begin{proof}
	By monotone convergence theorem, letting $N\rightarrow\infty$ in \eqref{re12}, we have
	\begin{displaymath}
	\tilde{V}_n^\infty=\max\{X_n,\hat{\mathbb{E}}_n[\tilde{V}_{n+1}^\infty]\},
	\end{displaymath}
	which implies that $\{\tilde{V}_n^\infty,n\in\mathbb{N}\}$  is a $G$-supermartingale dominating the process $\{X_n,n\in\mathbb{N}\}$. Let $\{U_n,n\in\mathbb{N}\}$ be a $G$-supermartingale dominating the process $\{X_n,n\in\mathbb{N}\}$. By Theorem \ref{os1}, $\{\tilde{V}_n^N,n=0,1,\cdots,N\}$ is the smallest $G$-supermartingale dominating $\{X_n,n=0,1,\cdots,N\}$. Then for each $n\in\mathbb{N}$ and $N\geq n$, we have $\tilde{V}_n^N\leq U_n$. It follows that
	\begin{displaymath}
	U_n\geq \lim_{N\rightarrow \infty}\tilde{V}_n^N=\tilde{V}_n^\infty,
	\end{displaymath}
	which yields that $\{\tilde{V}_n^\infty,n=0,1,\cdots,N\}$ is the smallest $G$-supermartingale dominating $\{X_n,n=0,1,\cdots,N\}$.
\end{proof}

For each $j\in \mathbb{N}$, denote by $\mathcal{T}_{j}$ the collection of all $G$-stopping times taking values in $\{j,j+1,\cdots\}$ such that
\begin{equation}\label{re14}
\lim_{N\rightarrow \infty} c(\tau>N)=0.
\end{equation}
Set
\begin{displaymath}
\tilde{V}_0=\sup_{\tau\in\mathcal{T}_{0}}\hat{\mathbb{E}}[X_\tau].
\end{displaymath}

\begin{remark}
If a $G$-stopping time $\tau$ satisfies condition \eqref{re14}, noting that $\{\tau=\infty\}\subset \{\tau>N\}$ for any $N\in\mathbb{N}$, we obtain that
\begin{displaymath}
0\leq c(\tau=\infty)\leq \lim_{N\rightarrow \infty}c(\tau>N)=0,
\end{displaymath}
which implies that $\tau$ is finite quasi-surely. However, the inverse does not hold true. Consider the following example. Let $0<\underline{\sigma}^2<\bar{\sigma}^2=1$. Set
\begin{displaymath}
\tau=\begin{cases}
1, &\textrm{ if } \langle B\rangle_1=0,\\
N, &\textrm{ if } \langle B\rangle_1\in(1-\frac{1}{N-1},1-\frac{1}{N}], N\geq 2.
\end{cases}
\end{displaymath}
It is easy to check that $\tau$ is a $G$-stopping time and $c(\tau=\infty)=0$. However, for any fixed $N\in\mathbb{N}$, we have
\begin{displaymath}
c(\tau>N)=c(\langle B\rangle_1>1-\frac{1}{N})=1.
\end{displaymath}
\end{remark}

\begin{proposition}\label{os6}
	Under the above assumptions, we have
	\begin{displaymath}
	\tilde{V}_0=\tilde{V}_0^\infty.
	\end{displaymath}
\end{proposition}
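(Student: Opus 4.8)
The plan is to establish the two inequalities $\tilde{V}_0^\infty\leq \tilde{V}_0$ and $\tilde{V}_0\leq \tilde{V}_0^\infty$ separately. For the first one, I would invoke Theorem \ref{os1} on the finite horizon $N$ with $j=0$: there the conditional expectation $\hat{\mathbb{E}}_0$ reduces to $\hat{\mathbb{E}}$ and the essential supremum of the constants $\hat{\mathbb{E}}[X_\tau]$ is an ordinary supremum, so that $\tilde{V}_0^N=\hat{\mathbb{E}}[X_{\tau_0^N}]=\sup_{\tau\in\mathcal{T}_{0,N}}\hat{\mathbb{E}}[X_\tau]$, where $\tau_0^N$ is the first hitting time. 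Since $\tau_0^N$ takes values in $\{0,\dots,N\}$ it is bounded, whence $c(\tau_0^N>M)=0$ for all $M\geq N$ and therefore $\tau_0^N\in\mathcal{T}_0$. Consequently $\tilde{V}_0^N=\hat{\mathbb{E}}[X_{\tau_0^N}]\leq\sup_{\tau\in\mathcal{T}_0}\hat{\mathbb{E}}[X_\tau]=\tilde{V}_0$, and letting $N\to\infty$ gives $\tilde{V}_0^\infty\leq\tilde{V}_0$.

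For the reverse inequality I would fix an arbitrary $\tau\in\mathcal{T}_0$ and approximate it by the bounded $G$-stopping times $\tau\wedge N$. Each $\tau\wedge N$ is again a $G$-stopping time (as $I_{\{\tau\wedge N\leq t\}}$ equals $I_{\{\tau\leq t\}}$ for $t<N$ and $1$ for $t\geq N$) taking values in $\{0,\dots,N\}$, so $\tau\wedge N\in\mathcal{T}_{0,N}$ and hence $\hat{\mathbb{E}}[X_{\tau\wedge N}]\leq\tilde{V}_0^N\leq\tilde{V}_0^\infty$. Using the pointwise decomposition $X_\tau=X_{\tau\wedge N}+(X_\tau-X_N)I_{\{\tau>N\}}$ together with the sub-additivity of $\hat{\mathbb{E}}$, it remains to show that the tail term $\hat{\mathbb{E}}[(X_\tau-X_N)I_{\{\tau>N\}}]$ vanishes as $N\to\infty$. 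Writing $X^*:=\sup_n|X_n|$, so that $|X_\tau-X_N|\leq 2X^*$, and applying Hölder's inequality under $G$-expectation with conjugate exponents $\beta$ and $\beta/(\beta-1)$, I would bound this term by $2\,(\hat{\mathbb{E}}[(X^*)^\beta])^{1/\beta}\,(c(\tau>N))^{(\beta-1)/\beta}$, where I used $I_{\{\tau>N\}}^{\beta/(\beta-1)}=I_{\{\tau>N\}}$ and $\hat{\mathbb{E}}[I_{\{\tau>N\}}]=c(\tau>N)$. By Assumption \ref{a2} the first factor is finite and by condition \eqref{re14} the capacity $c(\tau>N)\to 0$, so the tail term tends to $0$. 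Taking $\limsup_N$ then yields $\hat{\mathbb{E}}[X_\tau]\leq\tilde{V}_0^\infty$, and taking the supremum over $\tau\in\mathcal{T}_0$ gives $\tilde{V}_0\leq\tilde{V}_0^\infty$.

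The main obstacle is precisely this uniform control of the tail term, and it is here that the two stronger hypotheses of the infinite-horizon setting become indispensable. The exponent $\beta>1$ is exactly what allows Hölder's inequality to peel off the indicator $I_{\{\tau>N\}}$ and convert its contribution into a power of $c(\tau>N)$, while the admissibility requirement \eqref{re14} built into the definition of $\mathcal{T}_0$ is what forces that capacity to zero. Unlike the classical dominated setting, one cannot simply appeal to dominated convergence under the sublinear $\hat{\mathbb{E}}$, so this explicit Hölder estimate, rather than a convergence theorem, is the essential ingredient driving the argument.
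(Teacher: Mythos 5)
Your proposal is correct and follows essentially the same route as the paper: the inequality $\tilde{V}_0^\infty\leq\tilde{V}_0$ comes from Theorem \ref{os1} and the inclusion of bounded $G$-stopping times into $\mathcal{T}_0$, while the reverse inequality is obtained by truncating an arbitrary $\tau\in\mathcal{T}_0$ to $\tau\wedge N$ and controlling $\hat{\mathbb{E}}[|X_\tau-X_{\tau\wedge N}|]$ via the same H\"older estimate $C(\hat{\mathbb{E}}[\sup_n|X_n|^\beta])^{1/\beta}(c(\tau>N))^{(\beta-1)/\beta}$ together with condition \eqref{re14}. The only differences are cosmetic (your explicit verification that $\tau\wedge N$ is a $G$-stopping time, and taking $N\to\infty$ rather than fixing $\varepsilon$ first).
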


\begin{proof}
	By Theorem \ref{os1}, it is obvious that $\tilde{V}_0\geq \tilde{V}_0^N$, for any $N\in\mathbb{N}$, which implies that $\tilde{V}_0\geq \tilde{V}_0^\infty$. We then prove the inverse inequality. For any $\tau\in \mathcal{T}_0$ and $\varepsilon>0$, there exists some $N$ such that  $c(\tau>N)\leq \varepsilon$. By Assumption 3.4 and the H\"{o}lder inequality, we can calculate that
	\begin{equation}\label{re}
	\hat{\mathbb{E}}[|X_\tau-X_{\tau \wedge N}|]\leq \hat{\mathbb{E}}[2\sup_{n\in\mathbb{N}}|X_n|I_{\{\tau>N\}}]\leq C(\hat{\mathbb{E}}[\sup_{n\in\mathbb{N}}|X_n|^\beta])^{\frac{1}{\beta}}
(\hat{\mathbb{E}}[I_{\{\tau>N\}}])^{\frac{\beta-1}{\beta}}\leq C\varepsilon^{\frac{\beta-1}{\beta}}.
	\end{equation}
	It follows that
	\begin{displaymath}
	\hat{\mathbb{E}}[X_\tau]\leq \hat{\mathbb{E}}[X_{\tau\wedge N}]+C\varepsilon^{\frac{\beta-1}{\beta}}\leq \tilde{V}_0^N+C\varepsilon^{\frac{\beta-1}{\beta}}\leq \tilde{V}_0^\infty+C\varepsilon^{\frac{\beta-1}{\beta}}.
	\end{displaymath}
	Letting $\varepsilon\rightarrow\infty$, since $\tau$ is arbitrarily chosen, we finally get the desired result.
\end{proof}

\begin{proposition}\label{os8}
	Assume that
	\begin{displaymath}
	\tau_{j}=\inf\{l\geq j: \tilde{V}_l^\infty=X_l\}
	\end{displaymath}
satisfies condition \eqref{re14}. Then we have
	\begin{description}
		\item[(i)] $\tau_{j}\in \mathcal{T}_j$ and $\tilde{V}^\infty_{n\wedge \tau_j}\in L_G^{1^*_*}(\Omega_n)$, for each $n\in\mathbb{N}$;
		\item[(ii)] $\{\tilde{V}^\infty_{n\wedge \tau_j},n=j,j+1,\cdots\}$ is a $G$-martingale;
		\item[(iii)] For any $j\in \mathbb{N}$,
		\begin{displaymath}
		\tilde{V}^\infty_j=\hat{\mathbb{E}}_j[X_{\tau_j}]=\underset{\tau\in \mathcal{T}_{j}}{ess\sup}\textrm{ }\hat{\mathbb{E}}_j[X_\tau].
		\end{displaymath}
	\end{description}
\end{proposition}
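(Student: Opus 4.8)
The plan is to transplant the argument of Theorem \ref{os1}(2) to the infinite horizon, replacing each exact finite-horizon identity by a limiting argument that is controlled quantitatively through Assumption \ref{a2} and condition \eqref{re14}. I will prove the three assertions in order, since (ii) relies on the space membership in (i), and (iii) relies on the martingale property in (ii).

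For (i), I would first show $\tau_j$ is a $G$-stopping time exactly as in the finite case: writing $\{\tau_j\leq n\}=\cup_{k=j}^n\{\tilde{V}_k^\infty=X_k\}$ gives $I_{\{\tau_j\leq n\}}=\max_{j\leq k\leq n}I_{\{\tilde{V}_k^\infty-X_k=0\}}$, and since each $\tilde{V}_k^\infty$ is an increasing $q.s.$-limit of the functions $\tilde{V}_k^N\in L_G^1(\Omega_k)$ it is lower semicontinuous, so the level-set indicator lies in $L_G^{1^*}(\Omega_k)$ by the mechanism of Remark \ref{o8}; hence $I_{\{\tau_j\leq n\}}\in L_G^{1^*}(\Omega_n)$. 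Together with the standing hypothesis that $\tau_j$ satisfies \eqref{re14}, this yields $\tau_j\in\mathcal{T}_j$. For the membership $\tilde{V}_{n\wedge\tau_j}^\infty\in L_G^{1^*_*}(\Omega_n)$ I would use the telescoping decomposition
\[
\tilde{V}_{n\wedge\tau_j}^\infty=\sum_{k=j}^{n-1}(\tilde{V}_k^\infty-\tilde{V}_{k+1}^\infty)I_{\{\tau_j\leq k\}}+\tilde{V}_n^\infty
\]
exactly as in \eqref{re5}, reading off the membership from the structure of the $\tilde{V}_k^\infty$ as increasing limits of $L_G^1$ random variables.

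For (ii), the martingale property is the finite-horizon computation transplanted almost verbatim. On $\{\tau_j\leq n\}^c$ one has $\tilde{V}_n^\infty\neq X_n$, hence $\tilde{V}_n^\infty>X_n$, so the recursion $\tilde{V}_n^\infty=\max\{X_n,\hat{\mathbb{E}}_n[\tilde{V}_{n+1}^\infty]\}$ forces $\tilde{V}_n^\infty=\hat{\mathbb{E}}_n[\tilde{V}_{n+1}^\infty]$ there. As in \eqref{re6} this gives $\tilde{V}_{(n+1)\wedge\tau_j}^\infty-\tilde{V}_{n\wedge\tau_j}^\infty=I_{\{\tau_j\leq n\}^c}(\tilde{V}_{n+1}^\infty-\hat{\mathbb{E}}_n[\tilde{V}_{n+1}^\infty])$; since $\{\tau_j\leq n\}\in\mathcal{B}(\Omega_n)$, the ``taking out the known indicator'' property of the extended conditional $G$-expectation (the analogue of Lemma \ref{os7} via \eqref{re15}) produces $\hat{\mathbb{E}}_n[\tilde{V}_{(n+1)\wedge\tau_j}^\infty]=\tilde{V}_{n\wedge\tau_j}^\infty$ as in \eqref{re7}. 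This is precisely where the membership established in (i) is needed, so that the conditional $G$-expectation is well defined on these random variables.

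For (iii), the martingale property gives $\tilde{V}_j^\infty=\hat{\mathbb{E}}_j[\tilde{V}_{N\wedge\tau_j}^\infty]$ for every $N\geq j$. Because $\tau_j$ satisfies \eqref{re14} it is finite $q.s.$, and on $\{\tau_j\leq N\}$ one has $\tilde{V}_{N\wedge\tau_j}^\infty=X_{\tau_j}$; on $\{\tau_j>N\}$ I would control $\hat{\mathbb{E}}[|\tilde{V}_{N\wedge\tau_j}^\infty-X_{\tau_j}|]$ by Hölder exactly as in \eqref{re}, using $c(\tau_j>N)\to0$ together with the uniform bound $\sup_N\hat{\mathbb{E}}[|\tilde{V}_N^\infty|^\beta]<\infty$, which follows from $\tilde{V}_N^\infty\leq\hat{\mathbb{E}}_N[\sup_n|X_n|]$, the tower property and Assumption \ref{a2}. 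Feeding this $\hat{\mathbb{E}}$-estimate through $\hat{\mathbb{E}}_j$ yields $\tilde{V}_j^\infty=\hat{\mathbb{E}}_j[X_{\tau_j}]$. For the essential supremum identity, I would show $\tilde{V}_j^\infty\geq\hat{\mathbb{E}}_j[X_\tau]$ for every $\tau\in\mathcal{T}_j$ by combining the finite-horizon bound \eqref{re8}, in the form $\tilde{V}_j^N\geq\hat{\mathbb{E}}_j[X_{\tau\wedge N}]$, with $\tilde{V}_j^N\leq\tilde{V}_j^\infty$ and the same truncation estimate (passing $\hat{\mathbb{E}}_j[X_{\tau\wedge N}]\to\hat{\mathbb{E}}_j[X_\tau]$ along a $q.s.$-convergent subsequence). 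Since this upper bound is attained at $\tau_j\in\mathcal{T}_j$, both conditions of Definition \ref{ess} hold and $\tilde{V}_j^\infty=\underset{\tau\in\mathcal{T}_j}{ess\sup}\,\hat{\mathbb{E}}_j[X_\tau]$.

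The main obstacle is the limit passage in (iii): the conditional $G$-expectation enjoys no dominated convergence, and the ``$\limsup$''-type Fatou inequality fails, so I cannot simply interchange $\lim_N$ with $\hat{\mathbb{E}}_j$. The remedy, and the reason Assumption \ref{a2} strengthens the finite-horizon hypothesis, is to replace the limit interchange by the explicit Hölder truncation estimate, in which the small-capacity factor $c(\tau_j>N)^{(\beta-1)/\beta}$ supplied by \eqref{re14} is balanced against the uniform $L^\beta$ control of both $X_{\tau_j}$ and $\tilde{V}_N^\infty$; establishing that uniform bound on the value processes is the one genuinely new estimate the infinite-horizon setting demands.
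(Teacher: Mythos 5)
Your overall architecture matches the paper's: parts (i) and (ii) are the finite-horizon argument of Theorem \ref{os1} transplanted, and part (iii) is handled by a H\"older truncation estimate driven by condition \eqref{re14}. Two points deserve comment.

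In (iii) you take a genuinely different, and somewhat more economical, route to the moment control. The paper first establishes the maximal bound $\hat{\mathbb{E}}[\sup_n|\tilde V_n^\infty|^p]<\infty$ for some $1<p<\beta$ by invoking Song's Doob-type inequality (Theorem 3.4 in \cite{S11}) --- this is exactly where the order loss in the $G$-Doob inequality forces $p<\beta$ --- and then bounds $|\tilde V^\infty_{n\wedge\tau_j}-\tilde V^\infty_{\tau_j}|$ by $2\sup_n|\tilde V_n^\infty|I_{\{\tau_j>n\}}$. You instead observe that $\tilde V^\infty_{N\wedge\tau_j}-X_{\tau_j}$ vanishes on $\{\tau_j\leq N\}$ and equals $\tilde V_N^\infty-X_{\tau_j}$ on $\{\tau_j>N\}$, so the H\"older step only requires the individual moments $\hat{\mathbb{E}}[|\tilde V_N^\infty|^\beta]$ (uniformly in $N$, via $|\tilde V_N^\infty|\leq \hat{\mathbb{E}}_N[\sup_n|X_n|]+\sup_n|X_n|$, Jensen and the tower property) together with $\hat{\mathbb{E}}[\sup_n|X_n|^\beta]<\infty$ from Assumption \ref{a2}. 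This works and bypasses the maximal inequality for the value process entirely; it is a legitimate simplification. The remainder of (iii) --- the bound $\tilde V_j^\infty\geq \tilde V_j^N\geq \hat{\mathbb{E}}_j[X_{\tau\wedge N}]$ followed by the truncation estimate on $X$ and a q.s.-convergent subsequence --- is the paper's argument.

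The one place where your sketch under-delivers is the membership $\tilde V^\infty_{n\wedge\tau_j}\in L_G^{1^*_*}(\Omega_n)$ in (i). Since $L_G^{1^*_*}(\Omega)$ is neither a linear space nor closed under multiplication by indicators, ``reading off the membership from the structure'' is not automatic: each term $(\tilde V_k^\infty-\tilde V_{k+1}^\infty)I_{\{\tau_j\leq k\}}$ mixes an increasing limit ($\tilde V^\infty_k=\lim_N\uparrow\tilde V^N_k$) with a decreasing one ($I_{\{\tau_j\leq k\}}=\lim_n\downarrow\xi_n^{j,k}$ with $\xi_n^{j,k}$ bounded in $L_G^1(\Omega_k)$), and the paper must normalize $X\geq 0$ and interleave the two limits in the correct order so that each signed product lands in $L_G^{1^*_*}$. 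Relatedly, your appeal to lower semicontinuity of $\tilde V_k^\infty$ to conclude $I_{\{\tilde V_k^\infty=X_k\}}\in L_G^{1^*}(\Omega_k)$ is not quite right, since elements of $L_G^1(\Omega_k)$ are only quasi-continuous; the clean route is the paper's identity $\{\tilde V_k^\infty=X_k\}=\cap_{N\geq k}\{\tilde V_k^N=X_k\}$ (valid because $\tilde V_k^N\uparrow\tilde V_k^\infty$ and $\tilde V_k^N\geq X_k$) combined with the stability of $L_G^{1^*}$ under decreasing limits (Proposition \ref{os20}). Both gaps are repairable, but they constitute the substantive verifications of part (i) rather than routine bookkeeping.
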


\begin{proof}
	\textbf{(i)}	Noting that for any $k\in \mathbb{N}$ and $N\geq k$, $\tilde{V}_k^N\geq X_k$ and $\tilde{V}_k^N\uparrow \tilde{V}_k^\infty$ as $N\rightarrow\infty$, then we have $\{\tilde{V}_k^\infty=X_k\}=\cap_{N\geq k}^\infty\{\tilde{V}_k^N=X_k\}$, which implies
	\begin{displaymath}
	I_{\{\tilde{V}_k^\infty=X_k\}}=\inf_{N\geq k}I_{\{\tilde{V}_k^N=X_k\}}.
	\end{displaymath}
	By the proof of Theorem \ref{os1}, we have $I_{\{\tilde{V}_k^N=X_k\}}\in L_G^{1^*}(\Omega_k)$. Applying Proposition \ref{os20}, we can check that $I_{\{\tilde{V}_k^\infty=X_k\}}\in  L_G^{1^*}(\Omega_k)$. Since
	\begin{displaymath}
	I_{\{\tau_j\leq n\}}=\max_{j\leq k\leq n}I_{\{\tilde{V}_k^\infty=X_k\}},
	\end{displaymath}
	it follows that $I_{\{\tau_j\leq n\}}\in  L_G^{1^*}(\Omega_n)$. Without loss of generality, we may assume $X_n\geq 0$ for any $n\in \mathbb{N}$. It is easy to check that
	\begin{displaymath}
	\tilde{V}_{n\wedge \tau_j}^\infty=\sum_{k=j}^{n-1}(\tilde{V}_k^\infty-\tilde{V}_{k+1}^\infty)I_{\{\tau_j\leq k\}}+\tilde{V}_n^\infty.
	\end{displaymath}
	Since $I_{\{\tau_j\leq k\}}\in  L_G^{1^*}(\Omega_k)$, there exists a bounded sequence $\{\xi_n^{j,k}\}_{n=1}^\infty\subset L_G^1(\Omega_k)$ such that $\xi_n^{j,k}\downarrow I_{\{\tau_j\leq k\}}$. Note that
	\begin{align*}
	-\tilde{V}^N_{k+1} \xi_n^{j,k}\downarrow -\tilde{V}_{k+1}^\infty\xi_n^{j,k}, &\textrm{ as } N\rightarrow \infty,\\
	-\tilde{V}_{k+1}^\infty\xi_n^{j,k}\uparrow -\tilde{V}_{k+1}^\infty I_{\{\tau_j\leq k\}}, &\textrm{ as } n\rightarrow \infty,\\
	\tilde{V}^N_{k} \xi_n^{j,k}\downarrow\tilde{V}_k^N I_{\{\tau_j\leq k\}}, &\textrm{ as } n\rightarrow \infty,\\
	\tilde{V}_k^N I_{\{\tau_j\leq k\}}\uparrow \tilde{V}_k^\infty I_{\{\tau_j\leq k\}}, &\textrm{ as } N\rightarrow \infty.
	\end{align*}
	It follows that $-\tilde{V}_{k+1}^\infty I_{\{\tau_j\leq k\}}\in L_G^{1_*^*}(\Omega_{k+1})$ and $\tilde{V}_k^\infty I_{\{\tau_j\leq k\}}\in L_G^{1_*^*}(\Omega_{k})$. Hence, $\tilde{V}^{\infty}_{n\wedge \tau_j}\in L_G^{1_*^*}(\Omega_{n})$.
	
	\textbf{(ii)} Note that
	\begin{equation}\label{re16}
	\tilde{V}^\infty_{(n+1)\wedge \tau_j}-\tilde{V}^\infty_{n\wedge \tau_j}= I_{\{\tau_j\geq n+1\}}(\tilde{V}^\infty_{n+1}-\tilde{V}^\infty_n)=I_{\{\tau_j\leq n\}^c}(\tilde{V}^\infty_{n+1}-\hat{\mathbb{E}}_n[\tilde{V}^\infty_{n+1}]).
	\end{equation}
	Since $\{\tau_j\leq n\}\in \mathcal{B}(\Omega_n)$ and $\tilde{V}^\infty_{n+1}$, $-\hat{\mathbb{E}}_n[\tilde{V}^\infty_{n+1}]\in L_G^{*1}(\Omega_{n+1})$, applying Lemma \ref{os7} and \eqref{re15}, we have
	\begin{equation}\label{re17}
	\hat{\mathbb{E}}_n[I_{\{\tau_j\leq n\}^c}(\tilde{V}^\infty_{n+1}-\hat{\mathbb{E}}_n[\tilde{V}^\infty_{n+1}])]=I_{\{\tau_j\leq n\}^c}\hat{\mathbb{E}}_n[\tilde{V}^\infty_{n+1}-\hat{\mathbb{E}}_n[\tilde{V}^\infty_{n+1}]]=0.
	\end{equation}
	By a similar analysis as Step (i), we can get that $-\tilde{V}^\infty_{n\wedge \tau_j}\in L_G^{1_*^*}(\Omega_{n})$. The above two equalities implies that
	\begin{displaymath}
	0=\hat{\mathbb{E}}_n[\tilde{V}^\infty_{(n+1)\wedge \tau_j}-\tilde{V}^\infty_{n\wedge \tau_j}]=\hat{\mathbb{E}}_n[\tilde{V}^\infty_{(n+1)\wedge \tau_j}]-\tilde{V}^\infty_{n\wedge \tau_j},
	\end{displaymath}
	which shows that $\{\tilde{V}^\infty_{n\wedge \tau_j},n=j,j+1,\cdots\}$ is a $G$-martingale.
	
	\textbf{(iii)}
	First, we claim that there exists some $1<p<\beta$ such that
	\begin{displaymath}
	\hat{\mathbb{E}}[\sup_{n\in \mathbb{N}}|\tilde{V}_n^\infty|^p]<\infty.
	\end{displaymath}
	By Theorem \ref{os1}, we have $\tilde{V}_j^N=\hat{\mathbb{E}}_j[X_{\tau_j^N}]$, where $\tau_j^N=\inf\{l\geq j:\tilde{V}_l^N=X_l\}$. It is easy to check that $|\tilde{V}_j^N|\leq \hat{\mathbb{E}}_j[\sup_{n\in \mathbb{N}}|X_n|]$ and
	\begin{displaymath}
	\hat{\mathbb{E}}[\sup_{1\leq j\leq N}|\tilde{V}_j^N|^p]\leq \hat{\mathbb{E}}[\sup_{1\leq j\leq N}\hat{\mathbb{E}}_j[\sup_{n\in \mathbb{N}}|X_n|^p]].
	\end{displaymath}
	Since $\hat{\mathbb{E}}[\sup_{n\in\mathbb{N}}|X_n|^\beta]<\infty$, by Theorem 3.4 in \cite{S11}, there exists a constant $C$ independent of $N$ such that $\hat{\mathbb{E}}[\sup_{1\leq j\leq N}|\tilde{V}_j^N|^p]\leq C$. By monotone convergence theorem, we have
	\begin{displaymath}
	\hat{\mathbb{E}}[\sup_{n\in \mathbb{N}}|\tilde{V}_n^\infty|^p]=\lim_{N\rightarrow \infty}\hat{\mathbb{E}}[\sup_{1\leq j\leq N}|\tilde{V}_j^N|^p]\leq C
	\end{displaymath}
	We then show that
	\begin{displaymath}
	\tilde{V}^\infty_j=\hat{\mathbb{E}}_j[\tilde{V}^\infty_{\tau_j}]=\hat{\mathbb{E}}_j[X_{\tau_j}].
	\end{displaymath}
	Indeed, by Step (ii), we have for any $n\geq j$
	\begin{equation}\label{re18}
	\tilde{V}^\infty_j=\hat{\mathbb{E}}_j[\tilde{V}^\infty_{n\wedge \tau_j}].
	\end{equation}
	For any $\varepsilon>0$, there exists some $N>0$ such that, for any $n\geq N$, $c(\tau_j>n)\leq \varepsilon$. It is easy to check that
	\begin{displaymath}
	\hat{\mathbb{E}}[|\hat{\mathbb{E}}_j[\tilde{V}^\infty_{n\wedge \tau_j}]-\hat{\mathbb{E}}_j[\tilde{V}^\infty_{\tau_j}]|]\leq \hat{\mathbb{E}}[|\tilde{V}^\infty_{n\wedge \tau_j}-\tilde{V}^\infty_{\tau_j}|]\leq \hat{\mathbb{E}}[2\sup_{n\in \mathbb{N}}|\tilde{V}_n^\infty|I_{\{\tau_j>n\}}]\leq C(\hat{\mathbb{E}}[\sup_{n\in \mathbb{N}}|\tilde{V}_n^\infty|^p])^{1/p}\varepsilon^{1/q},
	\end{displaymath}
	where $\frac{1}{p}+\frac{1}{q}=1$. First, letting $n\rightarrow\infty$, since $\varepsilon$ is arbitrarily small, \eqref{re18} yields that $\tilde{V}^\infty_j=\hat{\mathbb{E}}_j[\tilde{V}^\infty_{ \tau_j}]$. In the following, we show that for any $\tau\in \mathcal{T}_j$, $\tilde{V}_j^\infty\geq \hat{\mathbb{E}}_j[X_\tau]$. For any $\tau\in \mathcal{T}_j$ and $\varepsilon>0$, there exists some $N$ such that  $c(\tau>N)\leq \varepsilon$. We can calculate that
	\begin{displaymath}
	\hat{\mathbb{E}}[|X_\tau-X_{\tau \wedge N}|]\leq \hat{\mathbb{E}}[2\sup_{n\in\mathbb{N}}|X_n|I_{\{\tau>N\}}]\leq C\varepsilon^{\frac{\beta-1}{\beta}}.
	\end{displaymath}
	It follows that
	\begin{displaymath}
	\hat{\mathbb{E}}_j[X_\tau]=\lim_{N\rightarrow \infty}\hat{\mathbb{E}}[X_{\tau\wedge N}].
	\end{displaymath}
	By a similar analysis as the proof of Theorem \ref{os1}, we have for each $N\geq j$, $\tilde{V}_j^\infty\geq \hat{\mathbb{E}}_j[X_{\tau\wedge N}]$. Letting $N\rightarrow\infty$, we deduce that $\tilde{V}_j^\infty\geq \hat{\mathbb{E}}_j[X_\tau]$. This completes the proof.
\end{proof}

\begin{remark}
	For each fixed $N\in\mathbb{N}$, we may define the sequence $\{\underline{V}^N_n,n=0,1,\cdots,N\}$ recursively: Let $\underline{V}^N_N=X_N$ and
	\begin{displaymath}
	\underline{V}_n^N=\min\{X_n,\hat{\mathbb{E}}_n[\underline{V}^N_{n+1}]\}, \ n\leq N-1.
	\end{displaymath}
	We can check that for any $n\leq N\leq M$, $\underline{V}_n^N\leq \underline{V}_n^M$. It is natural to define
	\begin{displaymath}
	\underline{V}^\infty_n=\lim_{N\geq n,N\rightarrow \infty}\underline{V}_n^N.
	\end{displaymath}
	Then silimar results still hold for the sequence $\{\underline{V}^\infty,n\in\mathbb{N}\}$. More precisely, set \begin{displaymath}
	\tau_{j}=\inf\{l\geq j: \underline{V}_l^\infty=X_l\}.
	\end{displaymath}
	Assume that $\tau_j$ is finite quasi-surely (i.e. $c(\tau_j>N)\rightarrow 0$, as $N\rightarrow\infty$). Then we have
	\begin{description}
		\item[(i)] $\tau_{j}\in \mathcal{T}_j$ and $\underline{V}^\infty_{n\wedge \tau_j}\in L_G^{*1}(\Omega_n)$, for each $n\in\mathbb{N}$;
		\item[(ii)] $\{\underline{V}^\infty_{n\wedge \tau_j},n=j,j+1,\cdots\}$ is a $G$-martingale;
		\item[(iii)] For any $j\in \mathbb{N}$,
		\begin{displaymath}
		\underline{V}^\infty_j=\hat{\mathbb{E}}_j[X_{\tau_j}]=\underset{\tau\in \mathcal{T}_{j}}{ess\inf
		}\textrm{ }\hat{\mathbb{E}}_j[X_\tau],
		\end{displaymath}
		\item[(iv)] The sequence $\{\underline{V}^\infty_n,n\in\mathbb{N}\}$ is the largest $G$-submartingale dominated by the process $\{X_n,n\in\mathbb{N}\}$.
	\end{description}
\end{remark}

\section{Optimal stopping in continuous time}

\subsection{Finite time horizon case}
In this subsection, we provide the relation between the valuc function of the optimal stopping problem and the solution of reflected $G$-BSDE. For simplicity, assume the time horizon is $[0,1]$. We need to consider the following payoff process $\{X_t\}_{t\in[0,1]}$.
\begin{assumption}\label{a3}
	The payoff process $\{X_t\}_{t\in[0,1]}\in S_G^\beta(0,1)$ (for the definition, we may refer to Appendix B), where $\beta>1$.
\end{assumption}

Denote by $\mathcal{T}^\infty_{s,t}$ the collection of all $G$-stopping times $\tau$ such that $s\leq \tau\leq t$ and by $\mathcal{T}^n_{s,t}$ the collection of all $G$-stopping times taking values in $\mathcal{I}_n$ such that $s\leq \tau\leq t$, where $0\leq s<t$ and $\mathcal{I}_n=\{k/2^n,k=0,1,\cdots,2^n\}$. Set
\begin{equation}\label{re9}
V_0=\sup_{\tau\in\mathcal{T}^\infty_{0,1}}\hat{\mathbb{E}}[X_\tau].
\end{equation}
For each $n\in\mathbb{N}$, we define the following sequence $\{V^n_{t^n_k},k=0,1,\cdots,2^n\}$ backwardly: Let $V^n_1=X_1$ and
\begin{displaymath}
V^n_{t^n_k}=\max(X_{t^n_k},\hat{\mathbb{E}}_{t^n_k}[V^n_{t^n_{k+1}}]),\ k=0,1,\cdots,2^n-1,
\end{displaymath}
where $t^n_k=k/2^n$. By Theorem \ref{os1}, for any $n\in\mathbb{N}$ and $k=0,1,\cdots,2^n$, we have
\begin{displaymath}
V^n_{t^n_k}=\underset{\tau\in \mathcal{T}^n_{0,1},\tau\geq t^n_k}{ess\sup}\hat{\mathbb{E}}_{t^n_k}[X_\tau].
\end{displaymath}
It is easy to check that for any $n\in \mathbb{N}$ and $k=0,1,\cdots,2^n$, $V^n_{t^n_k}\leq V^{n+1}_{t^n_k}$. Then we may define
\begin{equation}\label{re10}
V_{t^n_k}^\infty=\lim_{m\geq n, m\rightarrow\infty}V_{t^n_k}^m.
\end{equation}

\begin{proposition}\label{os4}
	Let $\mathcal{I}=\cup_{n}\mathcal{I}_n$. For each $t\in\mathcal{I}$, we have $V^\infty_t\in L_G^{*1}(\Omega_t)$. Morevoer, the sequence $\{V_t^\infty, t\in \mathcal{I}\}$ is the smallest $G$-supermartingale dominating the process $\{X_t,t\in \mathcal{I}\}$.
\end{proposition}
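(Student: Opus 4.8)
The plan is to lift the finite-horizon discrete result of Theorem \ref{os1} to the dyadic skeleton $\mathcal{I}=\cup_n\mathcal{I}_n$ by monotone approximation, in direct analogy with the infinite-horizon argument of Proposition \ref{os5}. Fix a dyadic time $t=t^n_k\in\mathcal{I}_n$. Since $\mathcal{I}_n\subset\mathcal{I}_m$ for every $m\geq n$, the point $t$ lies on all finer grids, so by \eqref{re10} we have the monotone relation $V^m_t\uparrow V^\infty_t$ as $m\to\infty$, and every manipulation will consist of establishing an inequality at each level $m$ and then letting $m\to\infty$.

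First I would settle the three ``Snell-envelope'' properties. Domination is immediate, since $V^\infty_t\geq V^n_t\geq X_t$ for $t\in\mathcal{I}_n$. For the supermartingale inequality, take two consecutive points $t^n_k<t^n_{k+1}$ of $\mathcal{I}_n$; by Theorem \ref{os1}(1) each $V^m$ with $m\geq n$ is a $G$-supermartingale on the grid $\mathcal{I}_m$, so iterating the one-step inequality over the intermediate nodes of $\mathcal{I}_m$ and using the tower property gives $\hat{\mathbb{E}}_{t^n_k}[V^m_{t^n_{k+1}}]\leq V^m_{t^n_k}$. Passing to the limit with the monotone convergence theorem for the conditional $G$-expectation (legitimate because $V^m_{t^n_{k+1}}\uparrow V^\infty_{t^n_{k+1}}\in\mathbb{L}^1$, see below) yields $\hat{\mathbb{E}}_{t^n_k}[V^\infty_{t^n_{k+1}}]\leq V^\infty_{t^n_k}$, and a common-refinement argument extends this to arbitrary $s\leq t$ in $\mathcal{I}$. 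For minimality, let $U$ be any $G$-supermartingale on $\mathcal{I}$ dominating $X$; its restriction to the finite grid $\mathcal{I}_m$ is again a $G$-supermartingale there dominating $\{X_t,t\in\mathcal{I}_m\}$, so Theorem \ref{os1}(1) forces $V^m_t\leq U_t$, and letting $m\to\infty$ gives $V^\infty_t\leq U_t$.

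The main obstacle is the space membership $V^\infty_t\in L_G^{*1}(\Omega_t)$, which is also what makes the conditional expectations above meaningful. The easy half is the $\mathbb{L}^1$-control: reproducing the estimate of Proposition \ref{os8}(iii), each $V^m_t=\esssup_{\tau}\hat{\mathbb{E}}_t[X_\tau]$ is dominated by $\hat{\mathbb{E}}_t[\sup_s|X_s|]$, so Assumption \ref{a3} and the maximal inequality (Theorem 3.4 in \cite{S11}) furnish a constant $C$ independent of $m$ with $\hat{\mathbb{E}}[\sup_{m}\sup_{t\in\mathcal{I}_m}|V^m_t|^p]\leq C$ for some $1<p\leq\beta$, whence $V^\infty_t\in\mathbb{L}^1(\Omega_t)$ by monotone convergence. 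Since $V^\infty_t$ is then the increasing limit of the $L_G^1$-variables $V^m_t$, it lands a priori only in $L_G^{1^*_*}(\Omega_t)$, and the delicate point is to sharpen this to the smaller linear space $L_G^{*1}(\Omega_t)$. My plan here is to exploit the stopped-payoff decomposition: at level $m$, Theorem \ref{os1}(2) together with the expansion \eqref{re5} exhibits each relevant value as a difference of two elements of $L_G^{1^*}$, and the extended conditional expectation \eqref{re15} preserves this structure; the difficulty, which I expect to be the crux of the whole proof, is to commute the refinement limit $m\to\infty$ with this decomposition without losing the two-sided $L_G^{1^*}$ approximation. If this turns out to be technically awkward at this stage, one can retreat to the weaker conclusion $V^\infty_t\in\bar{L}_G^{1^*_*}(\Omega_t)$, which already follows from the $\mathbb{L}^1$-convergence $\hat{\mathbb{E}}[|V^m_t-V^\infty_t|]\to0$ and still supports the conditional-expectation calculus used above.
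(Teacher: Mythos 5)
Your argument coincides with the paper's own proof: the supermartingale inequality is obtained by interchanging the refinement limit $m\to\infty$ with the conditional expectation via monotone convergence (Proposition \ref{os20}(6)), and minimality follows from Theorem \ref{os1}(1) applied on each grid $\mathcal{I}_m$ and then letting $m\to\infty$. The membership $V^\infty_t\in L_G^{*1}(\Omega_t)$ that you rightly single out as the delicate point is in fact not addressed in the paper's proof at all---an increasing limit of $L_G^1$ variables lands a priori only in $L_G^{1^*_*}(\Omega_t)\subset \bar{L}_G^{1^*_*}(\Omega_t)$, which is exactly the fallback you propose and is all that the subsequent conditional-expectation calculus requires.
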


\begin{proof}
	Let $t_k^n, t_l^m\in\mathcal{I}$ and $t_k^n<t_l^m$, where $m,n\in \mathbb{N}$. It is easy to check that
	\begin{align*}
	\hat{\mathbb{E}}_{t_k^n}[V_{t_l^m}^\infty]&=\hat{\mathbb{E}}_{t_k^n}[\lim_{M\geq m, M\rightarrow\infty}V^M_{t_l^m}]=\hat{\mathbb{E}}_{t_k^n}[\lim_{M\geq (m\vee n), M\rightarrow\infty}V^M_{t_l^m}]\\
	&=\lim_{M\geq (m\vee n), M\rightarrow\infty}\hat{\mathbb{E}}_{t_k^n}[V_{t^m_l}^M]\leq \lim_{M\geq (m\vee n), M\rightarrow\infty}V^M_{t_k^n}=V^\infty_{t_k^n}.
	\end{align*}
	Now let $\{U_t,t\in \mathcal{I}\}$ be a $G$-supermartingale dominating $\{X_t,t\in\mathcal{I}\}$. By Theorem \ref{os1}, we know that $\{V^n,t\in\mathcal{I}_n\}$ is the smallest $G$-supermartingale dominating $\{X_t,t\in\mathcal{I}\}$. Therefore, for any $m\geq n$, we have $U_{t_k^n}\geq V^m_{t_k^n}$. Letting $m\rightarrow\infty$, we get that $U_{t_k^n}\geq V^\infty_{t_k^n}$, which completes the proof.
\end{proof}

\begin{proposition}\label{os2}
	Assume that the payoff process $\{X_t\}_{t\in[0,1]}$ satisfies Assumption \ref{a3}. Then we have
	\begin{displaymath}
	V_0=V_0^\infty.
	\end{displaymath}	
\end{proposition}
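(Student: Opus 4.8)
The plan is to establish the two inequalities $V_0^\infty\le V_0$ and $V_0\le V_0^\infty$ separately. For the first, evaluating Theorem \ref{os1} on the grid $\mathcal{I}_n$ at the initial time gives $V^n_{0}=\sup_{\tau\in\mathcal{T}^n_{0,1}}\hat{\mathbb{E}}[X_\tau]$ (the essential supremum at $t=0$ is deterministic and $\hat{\mathbb{E}}_0=\hat{\mathbb{E}}$). Since every grid stopping time is an admissible continuous-time $G$-stopping time, $\mathcal{T}^n_{0,1}\subset\mathcal{T}^\infty_{0,1}$, so $V^n_0\le V_0$ for each $n$; letting $n\to\infty$ yields $V_0^\infty\le V_0$.

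For the reverse inequality I would fix an arbitrary $\tau\in\mathcal{T}^\infty_{0,1}$ and discretize it from above by $\tau_n:=\lceil 2^n\tau\rceil/2^n$. First one checks $\tau_n\in\mathcal{T}^n_{0,1}$: it takes values in $\mathcal{I}_n$, satisfies $0\le\tau_n\le1$, and for each $k$ one has $\{\tau_n\le k/2^n\}=\{\tau\le k/2^n\}$, so $I_{\{\tau_n\le t\}}\in L_G^{1^*}(\Omega_t)$ for every $t$ because $\tau$ is a $G$-stopping time. Consequently $\hat{\mathbb{E}}[X_{\tau_n}]\le V^n_0\le V_0^\infty$. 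By sub-additivity of $\hat{\mathbb{E}}$,
\[
\hat{\mathbb{E}}[X_\tau]\le\hat{\mathbb{E}}[X_{\tau_n}]+\hat{\mathbb{E}}[|X_\tau-X_{\tau_n}|]\le V_0^\infty+\hat{\mathbb{E}}[|X_\tau-X_{\tau_n}|],
\]
so the whole problem reduces to proving $\hat{\mathbb{E}}[|X_\tau-X_{\tau_n}|]\to0$ as $n\to\infty$.

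This convergence is the main obstacle, and the delicate point is that one cannot simply invoke dominated convergence: as the failure of the ``$\limsup$''-type Fatou inequality recorded after Lemma \ref{fatou} shows, q.s.\ convergence together with a bound in $L_G^1$ is not enough under $\hat{\mathbb{E}}$. Instead I would use the uniform smallness $0\le\tau_n-\tau<2^{-n}$ together with the path regularity built into $S_G^\beta$. Approximate $X$ in $S_G^\beta$-norm by a process $X^\eta$ in the generating class $S_G^0(0,1)$ with $\|X-X^\eta\|_{S_G^\beta}<\eta$; since $\hat{\mathbb{E}}[|Y|]\le(\hat{\mathbb{E}}[|Y|^\beta])^{1/\beta}$, both $\hat{\mathbb{E}}[|X_{\tau_n}-X^\eta_{\tau_n}|]$ and $\hat{\mathbb{E}}[|X_\tau-X^\eta_\tau|]$ are bounded by $\hat{\mathbb{E}}[\sup_t|X_t-X^\eta_t|]\le\|X-X^\eta\|_{S_G^\beta}<\eta$, uniformly in $n$. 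For the remaining term I bound $|X^\eta_{\tau_n}-X^\eta_\tau|\le Z_n:=\sup_{|s-t|\le2^{-n}}|X^\eta_s-X^\eta_t|$. Because elements of $S_G^0$ have bounded, pathwise continuous trajectories on the compact interval $[0,1]$, each $Z_n$ is a bounded continuous functional of the path, i.e.\ $Z_n\in C_b(\Omega)$, and $Z_n\downarrow0$ q.s.\ by uniform continuity of the paths. The continuity from above of $\hat{\mathbb{E}}$ on $C_b(\Omega)$—a consequence of the weak compactness of $\mathcal{P}$ together with Dini's theorem applied to the continuous maps $P\mapsto E^P[Z_n]$—then gives $\hat{\mathbb{E}}[Z_n]\downarrow0$, whence $\limsup_n\hat{\mathbb{E}}[|X_\tau-X_{\tau_n}|]\le2\eta$; since $\eta>0$ is arbitrary the convergence follows.

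Combining the pieces, $\hat{\mathbb{E}}[X_\tau]\le V_0^\infty$ for every $\tau\in\mathcal{T}^\infty_{0,1}$, and taking the supremum over $\tau$ gives $V_0\le V_0^\infty$, completing the proof. The step I expect to require the most care is the continuity-from-above argument: one must ensure the modulus-of-continuity variable $Z_n$ is genuinely a bounded continuous functional of the path, which is precisely why it is convenient to perform the time-regularity estimate on the concrete approximant $X^\eta\in S_G^0$ rather than on a general element of $S_G^\beta$, for which the modulus need not be quasi-continuous.
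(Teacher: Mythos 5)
Your argument is correct and follows essentially the same route as the paper: one inequality from the inclusion $\mathcal{T}^n_{0,1}\subset\mathcal{T}^\infty_{0,1}$, and the other by discretizing an arbitrary $\tau\in\mathcal{T}^\infty_{0,1}$ from above onto the dyadic grid and controlling $\hat{\mathbb{E}}[|X_\tau-X_{\tau_n}|]$. The only difference is that the convergence $\hat{\mathbb{E}}[|X_\tau-X_{\tau_n}|]\to0$, which you prove from scratch via approximation in $S_G^\beta$ by elements of $S_G^0$ and continuity from above of $\hat{\mathbb{E}}$ on $C_b(\Omega)$, is precisely the content of Lemma \ref{the3.7} (the path-regularity property of $S_G^\beta$-processes), which the paper simply cites; your self-contained derivation of it is sound.
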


\begin{proof}
	Note that for each $n$,
	\begin{displaymath}
	V_0^n=\sup_{\tau\in\mathcal{T}^n_{0,1}}\hat{\mathbb{E}}[X_\tau].
	\end{displaymath}
	Consequently, we have $V_0\geq V_0^n$, $n\in \mathbb{N}$. Letting $n$ tends to infinity, we get $V_0\geq V_n^\infty$. Now we prove the inverse inequality. For each $\tau\in \mathcal{T}_{0,1}$ and $n\in\mathbb{N}$, set
	\begin{displaymath}
	\tau^n=\frac{1}{2^n}I_{\{0\leq \tau\leq \frac{1}{2^n}\}}+\sum_{k=2}^{2^n}\frac{k}{2^n}I_{\{\frac{k-1}{2^n}<\tau\leq \frac{k}{2^n}\}}.
	\end{displaymath}
	It is easy to check that $\tau^n\in \mathcal{T}^n_{0,1}$ and $|\tau^n-\tau|\leq 1/2^n$. Applying the continuity property of $X$ (see Lemma \ref{the3.7}), we have
	\begin{displaymath}
	\lim_{n\rightarrow \infty}\hat{\mathbb{E}}[|X_\tau-X_{\tau^n}|]=0.
	\end{displaymath}
	It follows that
	\begin{displaymath}
	\hat{\mathbb{E}}[X_\tau]=\lim_{n\rightarrow\infty}\hat{\mathbb{E}}[X_{\tau^n}]\leq \lim_{n\rightarrow \infty} V_0^n=V_0^\infty.
	\end{displaymath}
	Since $\tau$ is arbitrarily choosen, we deduce that $V_0\leq V_0^\infty$.
\end{proof}

According to \cite{CR}, we know that the value function of the optimal stopping problem defined by $g$-expectation coincides with the solution of reflected BSDE with a lower obstacle. It is natural to conjecture that our value function \eqref{re9} defined by $G$-expectation corresponds to the solution of reflected BSDE driven by $G$-Brownian motion. The solution of reflected $G$-BSDE is a triple of processes $(Y,Z,L)$ such that the first component $Y$ lies above the obstacle process $X$ and the last component can be regarded as the force to push $Y$ upwards which behaves in a minimal way satisfying the martingale condition instead of the Skorohod condition. For more details, we may refer to the Appendix B.
\begin{theorem}\label{os3}
	Let $(Y,Z,L)$ be the solution of reflected $G$-BSDE with terminal value $X_1$, generator 0 and obstacle process $X$. Then we have $Y_0=V_0$.
\end{theorem}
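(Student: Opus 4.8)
The plan is to prove $Y_0=V_0^\infty$ and then invoke Proposition \ref{os2} to conclude $Y_0=V_0$. The argument rests on two minimality characterizations. By Proposition \ref{os4}, the family $\{V_t^\infty,t\in\mathcal{I}\}$ is the smallest $G$-supermartingale dominating $X$ along the dyadic set $\mathcal{I}=\cup_n\mathcal{I}_n$. By the theory of reflected $G$-BSDEs developed in \cite{LPSH}, the first component $Y$ of the solution with generator $0$ is a $G$-supermartingale satisfying $Y_t\geq X_t$ and $Y_1=X_1$, and it is the smallest such $G$-supermartingale on the whole interval $[0,1]$.

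First I would establish the easy inequality $Y_0\geq V_0^\infty$. Restricting $Y$ to the dyadic times, $\{Y_t,t\in\mathcal{I}\}$ is a $G$-supermartingale dominating $\{X_t,t\in\mathcal{I}\}$, so the minimality of $V^\infty$ in Proposition \ref{os4} gives $V_t^\infty\leq Y_t$ for every $t\in\mathcal{I}$; taking $t=0$ yields $V_0^\infty\leq Y_0$.

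For the reverse inequality I would extend the dyadic Snell envelope to a $G$-supermartingale on all of $[0,1]$ that still dominates $X$, and then appeal to the minimality of $Y$. Set $\bar V_t:=V_t^\infty$ for $t\in\mathcal{I}$ and $\bar V_t:=\lim_{\mathcal{I}\ni s\downarrow t}V_s^\infty$ for $t\in[0,1]\setminus\mathcal{I}$. Arguing as in the proof of Proposition \ref{os8}, Assumption \ref{a3} supplies a uniform moment bound $\hat{\mathbb{E}}[\sup_{t\in\mathcal{I}}|V_t^\infty|^p]<\infty$ for some $1<p<\beta$, which guarantees that these right limits exist and that $\bar V$ has regular paths; the $S_G^\beta$-continuity of $X$ then lets the pointwise domination $V_s^\infty\geq X_s$ pass to the limit, giving $\bar V_t\geq X_t$ for every $t$. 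The supermartingale inequality $\hat{\mathbb{E}}_s[\bar V_t]\leq\bar V_s$ for $s<t$ in $[0,1]$ is inherited from the one on $\mathcal{I}$ by approximating the times from the right along $\mathcal{I}$ and using the downward continuity of the conditional $G$-expectation recorded in the Appendix. Hence $\bar V$ is a $G$-supermartingale dominating $X$ on $[0,1]$ with $\bar V_1=X_1$, and the minimality of the reflected $G$-BSDE solution forces $Y_t\leq\bar V_t$; in particular $Y_0\leq\bar V_0=V_0^\infty$. Combining the two inequalities with Proposition \ref{os2} gives $Y_0=V_0^\infty=V_0$.

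The main obstacle is this extension step. The delicate points are the existence and path-regularity of the right limits defining $\bar V$ at non-dyadic times, keeping each $\bar V_t$ inside the space $L_G^{*1}(\Omega_t)$ on which the conditional $G$-expectation acts, and justifying the interchange of limit and $\hat{\mathbb{E}}_s$ when checking the supermartingale inequality across non-dyadic times. It is precisely the $L^\beta$-integrability and path-continuity in Assumption \ref{a3}, together with the uniform moment bound borrowed from Proposition \ref{os8} and the downward continuity of $\hat{\mathbb{E}}_s$, that make these limiting operations legitimate; once $\bar V$ is constructed, the minimality of the reflected $G$-BSDE solution from \cite{LPSH} closes the argument.
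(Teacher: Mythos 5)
Your argument for $Y_0\geq V_0^\infty=V_0$ coincides with the paper's (Proposition \ref{1}, the minimality of $V^n$ from Theorem \ref{os1}, and Proposition \ref{os2}) and is fine. The reverse inequality is where your route diverges and where it has a genuine gap. You invoke ``the minimality of the reflected $G$-BSDE solution'' among $G$-supermartingales dominating $X$ on $[0,1]$, but no such result is available as an input: the appendix records only Proposition \ref{1} (that $Y$ \emph{is} a $G$-supermartingale dominating $X$), and the minimality of $Y$ is essentially the content of what is being proved --- it is obtained as a \emph{consequence} of Theorem \ref{os3} together with Remark \ref{re0} and Proposition \ref{os4}, not the other way around. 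As stated, this step is circular. The tool the paper actually uses is the representation $Y_0=\hat{\mathbb{E}}[X_\tau]$ with $\tau=\inf\{t\geq 0: Y_t-X_t=0\}$ (Proposition 7.7 of \cite{LPSH}); one then approximates $\tau$ from above by the discrete $G$-stopping times $\tau^n_\varepsilon=\inf\{t\in\mathcal{I}_n: Y_t-X_t\leq\varepsilon\}$, verifies pathwise that $\tau^n_\varepsilon\rightarrow\tau$ as $n\rightarrow\infty$ and $\varepsilon\rightarrow 0$, and uses the continuity of $X$ in $S_G^\beta$ (Lemma \ref{the3.7}) to conclude $Y_0=\lim\hat{\mathbb{E}}[X_{\tau^n_\varepsilon}]\leq\lim V_0^n=V_0$. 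No extension of $V^\infty$ off the dyadic times is needed.

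Even granting a minimality statement for $Y$, your construction of $\bar V$ contains an unjustified step: passing the inequality $\hat{\mathbb{E}}_s[\bar V_t]\leq \bar V_s$ through the right limits $\mathcal{I}\ni s'\downarrow t$ requires interchanging $\hat{\mathbb{E}}_s$ with a limit that is \emph{not} monotone. The only convergence theorem for the extended conditional $G$-expectation supplied by the paper (Proposition \ref{os20}, item (6)) applies to monotone sequences, and a dominated convergence theorem is unavailable under $G$-expectation --- the paper explicitly exhibits the failure of the ``$\limsup$'' Fatou lemma. The uniform $L^p$ bound on $\sup_{t\in\mathcal{I}}|V_t^\infty|$ does not repair this, nor does it by itself place $\bar V_t$ in a space on which $\hat{\mathbb{E}}_s$ is defined for non-dyadic $t$. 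So the ``downward continuity of the conditional $G$-expectation'' you appeal to does not exist in the form you need, and the extension step cannot be completed with the tools at hand.
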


\begin{proof}
	By Proposition \ref{1}, $Y$ is a $G$-supermartingale dominating the process $X$. By Theorem \ref{os1}, for all $n\in\mathbb{N}$, we have $Y_0\geq V_0^n$. Applying Proposition \ref{os2}, it follows that $Y_0\geq \lim_{n\rightarrow \infty}V_0^n=V_0$. We then show the inverse inequality. For each fixed $n\in \mathbb{N}$ and $\varepsilon>0$, set
	\begin{displaymath}
	\tau^n_\varepsilon=\inf\{t\in\mathcal{I}_n,t\geq 0:A_t\leq\varepsilon \},
	\end{displaymath}
	where $A_t=Y_t-X_t$. It is easy to check that $\tau^n_\varepsilon$ is a $G$-stopping time and it is decreasing in $n$ and $\varepsilon$. Furthermore
	\begin{displaymath}
	\tau_\varepsilon=\lim_{n\rightarrow \infty}\tau_\varepsilon^n=\inf\{t\in\mathcal{I},t\geq 0:A_t\leq\varepsilon \}.
	\end{displaymath}
	Let
	\begin{displaymath}
	\tau=\inf\{t\geq 0:A_t=0\}.
	\end{displaymath}
	By Proposition 7.7 in \cite{LPSH}, we have $Y_0=\hat{\mathbb{E}}[X_\tau]$. We claim that
	\begin{equation}\label{re11}
	\lim_{\varepsilon\rightarrow 0}\lim_{n\rightarrow \infty}\tau^n_\varepsilon=\lim_{\varepsilon\rightarrow 0}\tau_\varepsilon=\tau.
	\end{equation}
	We prove \eqref{re11} in two cases. Suppose that $\tau(\omega)=t\in \mathcal{I}$. Then there exists some $n\in \mathbb{N}$ such that $t\in \mathcal{I}_n$. For any $k\geq n$ and $\varepsilon>0$, we have $\tau^k_\varepsilon(\omega)\leq t$. For each fixed $m\in \mathbb{N}$, Note that $A_t(\omega)$ is continuous in $t$. Denote by $\varepsilon_m$ the minimum of $A_t(\omega)$ on the interval $[0,t-1/m]$. For any $\varepsilon<\varepsilon_m$ and $n\in\mathbb{N}$, we have $\tau_\varepsilon^n(\omega)>t-1/m$ and $\tau_\varepsilon(\omega)>t-1/m$. We conclude that for any $k\geq n$ and $\varepsilon<\varepsilon_m$,
	\begin{displaymath}
	t-\frac{1}{m}<\tau_\varepsilon^k(\omega)\leq t.
	\end{displaymath}
	First letting $k\rightarrow\infty$ and $\varepsilon\rightarrow0$ and then letting $m\rightarrow\infty$, we show that \eqref{re11} holds true when $\tau(\omega)\in \mathcal{I}$. If $\tau(\omega)=t\notin \mathcal{I}$, there exists a sequence $\{t_k\}\subset\mathcal{I}$ such that $t_k\downarrow t$. For any $\varepsilon<\varepsilon_m$, there exists a constant $K_m$ such that, for any $k\geq K_m$,
	\begin{displaymath}
	A_{t_k}(\omega)=|A_{t_k}(\omega)-A_t(\omega)|\leq \varepsilon.
	\end{displaymath}
	It follows that $\tau_\varepsilon(\omega)\leq t_k$. We deduce that for any $\varepsilon<\varepsilon_m$ and $k\geq K_m$,
	\begin{displaymath}
	t-\frac{1}{m}<\tau_\varepsilon(\omega)\leq t_k.
	\end{displaymath}
	First letting $k\rightarrow \infty$, then letting $\varepsilon\rightarrow0$ and finally letting $m\rightarrow\infty$, the above inequality yields that \eqref{re11} holds true. By the continuity property of $X$ (see Lemma \ref{the3.7}), we have
	\begin{displaymath}
	Y_0=\hat{\mathbb{E}}[X_\tau]=\lim_{\varepsilon\rightarrow 0}\lim_{n\rightarrow \infty}\hat{\mathbb{E}}[X_{\tau_\varepsilon^n}]\leq \lim_{\varepsilon\rightarrow 0}\lim_{n\rightarrow \infty}V_0^n=V_0.
	\end{displaymath}
\end{proof}

\begin{remark}\label{re0}
	By a similar proof of Proposition 7.7 in \cite{LPSH}, for any $t\in[0,1]$, we have $Y_t=\hat{\mathbb{E}}_t[X_{\tau_t}]$, where $(Y,Z,L)$ is the solution of reflected $G$-BSDE with data $(X_1,0,X)$ and
	\begin{displaymath}
		\tau_t=\inf\{s\geq t: Y_s-X_s=0\}.
	\end{displaymath}
	Modified the proof of Theorem \ref{os3} slightly, we obtain that $Y_t=V_t^\infty$, for any $t\in\mathcal{I}$.
\end{remark}


With the help of the relation between value function of optimal stopping problem and the solution of reflected BSDE driven by $G$-Brownian motion, we may get the following representation theorem similar with the discrete time case.
\begin{theorem}\label{os17}
	For each $t\in\mathcal{I}$, we  have 
	\begin{displaymath}
	V_t^\infty=\underset{\tau\in \mathcal{T}^\infty_{t,1}}{ess\sup}\textrm{ }\hat{\mathbb{E}}_t[X_\tau].
	\end{displaymath}
\end{theorem}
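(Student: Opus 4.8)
The plan is to verify directly that $V_t^\infty$ satisfies the two defining properties of the essential supremum in Definition \ref{ess} for the family $\mathcal{H}=\{\hat{\mathbb{E}}_t[X_\tau]:\tau\in\mathcal{T}^\infty_{t,1}\}$. This has the advantage that it proves the existence of the quasi-sure essential supremum and its identification with $V_t^\infty$ in one stroke, rather than invoking one of the existence criteria of Section 2 separately.

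For property (i), I must show $V_t^\infty\geq\hat{\mathbb{E}}_t[X_\tau]$ q.s. for every $\tau\in\mathcal{T}^\infty_{t,1}$. I would reuse the discretization from the proof of Proposition \ref{os2}: fix $n$ large enough that $t\in\mathcal{I}_n$, and round $\tau$ up to the grid, letting $\tau^n$ be the smallest point of $\mathcal{I}_n$ with $\tau^n\geq\tau$. Since $\tau$ is a $G$-stopping time and $t$ is itself a grid point, one checks $\tau^n\in\mathcal{T}^n_{t,1}$, $\tau^n\downarrow\tau$, and $|\tau^n-\tau|\leq 2^{-n}$. The finite-horizon representation established from Theorem \ref{os1} on the grid $\mathcal{I}_n$, namely $V_t^n=\underset{\tau'\in\mathcal{T}^n_{0,1},\,\tau'\geq t}{ess\sup}\hat{\mathbb{E}}_t[X_{\tau'}]$, then gives $\hat{\mathbb{E}}_t[X_{\tau^n}]\leq V_t^n\leq V_t^\infty$ q.s. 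By the continuity property of $X$ (Lemma \ref{the3.7}) we have $\hat{\mathbb{E}}[|X_{\tau^n}-X_\tau|]\to 0$, whence $\hat{\mathbb{E}}[|\hat{\mathbb{E}}_t[X_{\tau^n}]-\hat{\mathbb{E}}_t[X_\tau]|]\to 0$; passing to a q.s.-convergent subsequence and letting $n\to\infty$ transfers the inequality to the limit, yielding $\hat{\mathbb{E}}_t[X_\tau]\leq V_t^\infty$ q.s.

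For property (ii), I would invoke Remark \ref{re0}, which identifies $V_t^\infty=Y_t=\hat{\mathbb{E}}_t[X_{\tau_t}]$ with $\tau_t=\inf\{s\geq t:Y_s-X_s=0\}\in\mathcal{T}^\infty_{t,1}$, where $(Y,Z,L)$ solves the reflected $G$-BSDE with data $(X_1,0,X)$. Thus $V_t^\infty$ is itself a member of $\mathcal{H}$: if $\eta'\in\mathcal{L}(\Omega)$ dominates every $\hat{\mathbb{E}}_t[X_\tau]$ q.s., then in particular $\eta'\geq\hat{\mathbb{E}}_t[X_{\tau_t}]=V_t^\infty$ q.s., which is exactly property (ii). Combining the two properties gives $V_t^\infty=\underset{\tau\in\mathcal{T}^\infty_{t,1}}{ess\sup}\hat{\mathbb{E}}_t[X_\tau]$.

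The main obstacle is property (i): while the construction of $\tau^n$ and the bound via Theorem \ref{os1} are routine, the passage to the limit must be handled quasi-surely rather than merely in $G$-expectation. One has to confirm that $X_\tau$ and $X_{\tau^n}$ lie in a space on which $\hat{\mathbb{E}}_t$ is well defined and $L_G^1$-contractive (ensured by Assumption \ref{a3}, i.e. $X\in S_G^\beta$, together with Lemma \ref{the3.7}), and that $L_G^1$-convergence of the conditional expectations, combined with the q.s. inequality $\hat{\mathbb{E}}_t[X_{\tau^n}]\leq V_t^\infty$, indeed survives in the limit along a subsequence. This is the familiar delicacy of the $G$-framework, where pointwise q.s. control cannot be read off from convergence of expectations alone.
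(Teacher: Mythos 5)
Your argument for property (i) is essentially the paper's: round $\tau$ up to the dyadic grid, use the finite-horizon identity $\hat{\mathbb{E}}_t[X_{\tau^m}]\leq V_t^m\leq V_t^\infty$ from Theorem \ref{os1}, and pass to the limit via Lemma \ref{the3.7}; your extra care about extracting a q.s.-convergent subsequence of the conditional expectations is a reasonable way to make the limit passage rigorous. The problem is property (ii). You assert that $\tau_t=\inf\{s\geq t:Y_s-X_s=0\}$ belongs to $\mathcal{T}^\infty_{t,1}$, i.e.\ that it is a $G$-stopping time in the sense of Definition 2.2, so that $V_t^\infty=\hat{\mathbb{E}}_t[X_{\tau_t}]$ is itself an element of $\mathcal{H}$ and is therefore dominated by any $\eta'$. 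This is not justified, and the paper deliberately avoids making that claim. Being a $G$-stopping time requires $I_{\{\tau_t\leq s\}}\in L_G^{1^*}(\Omega_s)$ for each $s$; for the continuous-time first hitting time of the zero set of $A=Y-X$ one only gets $I_{\{\tau_t\leq s\}}=I_{\{\inf_{r\in[t,s]}A_r=0\}}$, and since the running infimum is itself only a decreasing limit of $L_G^1$ random variables, the indicator cannot be placed in $L_G^{1^*}(\Omega_s)$ by Remark \ref{o8}, which applies to $L_G^1$ random variables. Note that in the proof of Theorem \ref{os3} the paper only certifies the \emph{discrete} hitting times $\tau^n_\varepsilon$ as $G$-stopping times and never asserts this for $\tau$ or $\tau_\varepsilon$. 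Since $\eta'$ is only assumed to dominate $\hat{\mathbb{E}}_t[X_\tau]$ for genuine $G$-stopping times $\tau$, your one-line deduction $\eta'\geq\hat{\mathbb{E}}_t[X_{\tau_t}]$ does not follow.

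The paper closes exactly this gap by approximation: it introduces $\tau_t^{n,\varepsilon}=\inf\{s\in\mathcal{I}_n,\,s\geq t:A_s\leq\varepsilon\}$, which \emph{is} a $G$-stopping time in $\mathcal{T}^n_{t,1}\subset\mathcal{T}^\infty_{t,1}$ (a hitting time over a finite grid of the closed set $\{A\leq\varepsilon\}$, covered by Remark \ref{o8}), shows $\lim_{\varepsilon\to0}\lim_{n\to\infty}\tau_t^{n,\varepsilon}=\tau_t$ as in Theorem \ref{os3}, and then uses Lemma \ref{the3.7} together with Remark \ref{re0} to get $V_t^\infty=\hat{\mathbb{E}}_t[X_{\tau_t}]=\lim_{\varepsilon\to0}\lim_{n\to\infty}\hat{\mathbb{E}}_t[X_{\tau_t^{n,\varepsilon}}]\leq\eta'$. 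Your argument becomes correct if you replace the direct appeal to $\tau_t\in\mathcal{T}^\infty_{t,1}$ by this two-parameter limit over the discrete $\varepsilon$-hitting times.
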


\begin{proof}
	Since $t\in\mathcal{I}$, we assume that $t=\frac{k}{2^n}$ for some $n\in \mathbb{N}$. Then for any $\tau\in \mathcal{T}_{t,1}^\infty$ and $m\geq n$, set
	\begin{displaymath}
		\tau^m=\frac{2^{m-n}k+1}{2^m}I_{\{\frac{k}{2^n}\leq \tau\leq \frac{2^{m-n}k+1}{2^m}\}}+\sum_{i=2}^{2^m-2^{m-n}k}\frac{2^{m-n}k+i}{2^m}I_{\{\frac{2^{m-n}k+i-1}{2^m}<\tau\leq \frac{2^{m-n}k+i}{2^m}\}}.
	\end{displaymath}
	We can check that $\tau^m\in \mathcal{T}_{t,1}^m$ and $|\tau-\tau^m|\leq 1/2^m$. By the continuity property of $X$, it follows that
	\begin{displaymath}
		\hat{\mathbb{E}}_t[X_\tau]=\lim_{m\rightarrow \infty}\hat{\mathbb{E}}_t[X_{\tau^m}]\leq \lim_{m\rightarrow \infty}V_t^m=V_t^{\infty}.
	\end{displaymath}
	We now claim that if $\eta\geq \hat{\mathbb{E}}_t[X_\tau]$, for any $\tau\in \mathcal{T}_{t,1}^\infty$, then $\eta\geq V_t^\infty$. For each fixed $n\in\mathbb{N}$ and $\varepsilon>0$, set
	\begin{displaymath}
		\tau_t^{n,\varepsilon}=\inf\{s\in \mathcal{I}_n, s\geq t: A_s\leq \varepsilon\},
	\end{displaymath}
	where $\{A\}$ is the process defined in Theorem \ref{os3}. By a similar analysis, we have $\tau_t^{n,\varepsilon}\in \mathcal{T}_{t,1}^n$ and
	\begin{displaymath}
		\lim_{\varepsilon\rightarrow 0}\lim_{n\rightarrow \infty}\tau_t^{n,\varepsilon}=\tau_t.
	\end{displaymath}
	Applying Lemma \ref{the3.7} and Remark \ref{re0} yields that
	\begin{displaymath}
		V_t^\infty=\hat{\mathbb{E}}_t[X_{\tau_t}]=\lim_{\varepsilon\rightarrow 0}\lim_{n\rightarrow \infty}\hat{\mathbb{E}}_t[X_{\tau_t^{n,\varepsilon}}]\leq \eta.
	\end{displaymath}
    The proof is complete.
\end{proof}

\subsection{Infinite time horizon case}
In this subsection, we investigate the infinite time horizon case. The payoff process $X$ satisfies the following assumption.
\begin{assumption}
	$\{X_t,t\geq 0\}$ is bounded from below and for any $n\in\mathbb{N}$, $X\in S_G^\beta(0,n)$, where $\beta>1$. Furthermore, $\hat{\mathbb{E}}[\sup_{t\geq 0}|X_t|^\beta]<\infty$.
\end{assumption}

Set $t^n_k=\frac{k}{2^n}$ and $\mathcal{I}^\infty_n=\{t_k^n:k=0,1,\cdots\}$. For each fixed $n,N\in \mathbb{N}$, define the following sequence $\{V^{n,N}_{t_k^n}:k=0,1,\cdots,N\}$ backwardly:
\begin{displaymath}
	V^{n,N}_{t_N^n}=X_{t_N^n},\ \ V^{n,N}_{t_k^n}=\max\{X_{t_k^n},\hat{\mathbb{E}}_{t_k^n}[V^{n,N}_{t_{k+1}^n}]\}, k\leq N-1.
\end{displaymath}
It is easy to see that for any $k\leq N\leq M$, $V^{n,N}_{t_k^n}\leq V^{n,M}_{t_k^n}$. We define
\begin{displaymath}
	V^n_{t_k^n}=\lim_{N\geq k,N\rightarrow \infty}V^{n,N}_{t_k^n}.
\end{displaymath}
Then $V^n_{t_k^n}\in L^{*1}_G(\Omega_{t_k^n})$. If $n<m$, the for any $N\in \mathbb{N}$, we can easily check that
\begin{displaymath}
	V^{n,N}_{t_k^n}\leq V^{m,N2^{m-n}}_{t_{k2^{m-n}}^m}, k\leq N,
\end{displaymath}
which yields that $V^n_{t_k^n}\leq V^m_{t_{k2^{m-n}}^m}=V^m_{t_k^n}$. We define
\begin{displaymath}
	V_{t_k^n}=\lim_{m\geq n, m\rightarrow\infty}V^m_{t_{k2^{m-n}}^m}=\lim_{m\geq n, m\rightarrow\infty}V^m_{t_k^n}.
\end{displaymath}
Then $V^n_{t_k^n}\in L^{*1}_G(\Omega_{t_k^n})$ and $V$ satisfies the following property.

\begin{proposition}\label{os18}
	The sequence $\{V_t,t\in \mathcal{I}^\infty\}$ is the smallest $G$-supermartingale dominating $\{X_t,t\in \mathcal{I}^\infty\}$, where $\mathcal{I}^\infty=\cup_{n=1}^\infty\mathcal{I}_n^\infty$. Besides, we have
	\begin{equation}\label{re26}
		V_0=\sup_{\tau\in \mathcal{T}^\infty_0}\hat{\mathbb{E}}[X_\tau],
	\end{equation}
	where $\mathcal{T}^\infty_t$ is the collection of all $G$-stopping time taking values in $[t,\infty)$ and satisfying equation \eqref{re14}.
\end{proposition}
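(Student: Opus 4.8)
The plan is to combine the two limiting procedures already developed: the refinement limit $m\to\infty$ from Proposition \ref{os4} for continuous time, and the horizon limit $N\to\infty$ from Propositions \ref{os5} and \ref{os6} for the infinite discrete setting. Since $V$ is built as the double monotone limit $V_{t^n_k}=\lim_{m}\lim_{N}V^{m,N}_{t^n_k}$, most of the supermartingale statement reduces to interchanging conditional $G$-expectation with monotone limits.

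First I would fix the mesh level $m$. By Proposition \ref{os5}, $\{V^m_{t^m_k},k\in\mathbb{N}\}$ is the smallest $G$-supermartingale dominating $\{X_{t^m_k}\}$ on the grid $\mathcal{I}^\infty_m$. To prove the supermartingale property of $V$, I would take $s=t^n_k<t=t^n_l$ in $\mathcal{I}^\infty$ and, exactly as in Proposition \ref{os4}, use that $V^m_t\uparrow V_t$ together with the monotone convergence theorem for conditional $G$-expectation to write $\hat{\mathbb{E}}_s[V_t]=\lim_{m}\hat{\mathbb{E}}_s[V^m_t]\le\lim_{m}V^m_s=V_s$, the inequality being the level-$m$ supermartingale property. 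Domination $V_t\ge X_t$ is immediate since $V^m_t\ge X_t$ for all large $m$. For minimality, if $\{U_t,t\in\mathcal{I}^\infty\}$ is any $G$-supermartingale dominating $X$, its restriction to $\mathcal{I}^\infty_m$ is a $G$-supermartingale dominating $X$ there, so Proposition \ref{os5} gives $U_{t^m_k}\ge V^m_{t^m_k}$; letting $m\to\infty$ (with $t^n_k=t^m_{k2^{m-n}}$) yields $U\ge V$ on $\mathcal{I}^\infty$.

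For the representation \eqref{re26}, set $\tilde V_0:=\sup_{\tau\in\mathcal{T}^\infty_0}\hat{\mathbb{E}}[X_\tau]$. The bound $V_0\le\tilde V_0$ is easy: any stopping time valued in a finite grid $\{0,1/2^m,\dots,N/2^m\}$ lies in $\mathcal{T}^\infty_0$, so the finite-horizon representation of Theorem \ref{os1} gives $V^{m,N}_0=\sup_\tau\hat{\mathbb{E}}[X_\tau]\le\tilde V_0$, and passing to the limits in $N$ and $m$ yields $V_0\le\tilde V_0$. The reverse inequality is the crux. Given $\tau\in\mathcal{T}^\infty_0$ and $\varepsilon>0$, I would first pick a real time $N$ with $c(\tau>N)\le\varepsilon$ and, as in \eqref{re}, use $\hat{\mathbb{E}}[\sup_t|X_t|^\beta]<\infty$ and H\"{o}lder's inequality to bound $\hat{\mathbb{E}}[|X_\tau-X_{\tau\wedge N}|]\le C\varepsilon^{(\beta-1)/\beta}$. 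Then, as in Proposition \ref{os2} and Theorem \ref{os17}, I would discretize the bounded $G$-stopping time $\tau\wedge N$ by rounding up onto $\mathcal{I}^\infty_m$, obtaining a $G$-stopping time $\tau^m$ with $|\tau^m-(\tau\wedge N)|\le 2^{-m}$ and, by the $S_G^\beta$-continuity of $X$ (Lemma \ref{the3.7}), $\hat{\mathbb{E}}[|X_{\tau\wedge N}-X_{\tau^m}|]\to0$. Since $\tau^m$ takes finitely many grid values, it is a finite-horizon grid stopping time, so $\hat{\mathbb{E}}[X_{\tau^m}]\le V^{m,N'}_0\le V_0$ for a suitable horizon $N'$. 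Chaining the estimates, letting $m\to\infty$ and then $\varepsilon\to0$, gives $\hat{\mathbb{E}}[X_\tau]\le V_0$, and taking the supremum over $\tau$ yields $\tilde V_0\le V_0$.

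The main obstacle is this reverse inequality: one must control the horizon truncation (needed because $\tau$ may be unbounded, being only quasi-surely finite) and the temporal discretization (needed to pass from continuous to grid-valued stopping times) simultaneously, keeping both error terms uniform and ensuring each $\tau^m$ is a genuine $G$-stopping time lying in a finite grid class so that Theorem \ref{os1} applies. The integrability assumption $\hat{\mathbb{E}}[\sup_t|X_t|^\beta]<\infty$ and the $S_G^\beta$-continuity of $X$ are precisely what force the two error terms to vanish in the right order.
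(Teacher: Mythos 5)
Your proof is correct, and the supermartingale, domination and minimality parts coincide with the paper's argument (monotone convergence of conditional $G$-expectation through the refinement limit, plus restriction of a competing supermartingale $U$ to each grid $\mathcal{I}^\infty_m$ and Proposition \ref{os5}). Where you genuinely diverge is in the reverse inequality $\sup_{\tau\in\mathcal{T}^\infty_0}\hat{\mathbb{E}}[X_\tau]\leq V_0$. The paper does this in one step: for $\tau\in\mathcal{T}^\infty_0$ it picks grid-valued approximations $\tau^n\in\mathcal{T}^n_0$ (still with unbounded range) so that $\tau^n\to\tau$, notes $\hat{\mathbb{E}}[X_{\tau^n}]\leq V_0^n$ by the already-proved infinite-horizon grid representation of Proposition \ref{os6}, and then passes to the limit via path continuity of $X$ and the one-sided Fatou lemma (Lemma \ref{fatou}), which is where the hypothesis that $X$ is bounded from below enters. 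You instead perform a two-stage truncation --- first in the horizon, controlled by H\"older and $\hat{\mathbb{E}}[\sup_t|X_t|^\beta]<\infty$ exactly as in \eqref{re}, then in the mesh, controlled by the $S_G^\beta$ modulus of continuity of Lemma \ref{the3.7} --- reducing everything to the finite-grid Theorem \ref{os1}. Your route essentially re-derives the content of Propositions \ref{os6} and \ref{os2} in combination rather than invoking Proposition \ref{os6} as a black box; it is longer but fully quantitative (explicit $\varepsilon^{(\beta-1)/\beta}$ and modulus-of-continuity error terms) and it sidesteps the Fatou/liminf argument, at the cost of having to verify that each rounded-up, horizon-truncated $\tau^m$ is a genuine $G$-stopping time on a finite grid --- which does hold, since $I_{\{\tau\wedge N\leq t\}}=I_{\{\tau\leq t\}}\vee I_{\{N\leq t\}}\in L_G^{1^*}(\Omega_t)$. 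Both arguments are valid under the stated assumptions.
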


\begin{proof}
	 By Proposition \ref{os5} and \ref{os6}, we derive that $\{V^n_{t_k^n}, k\in \mathbb{N}\}$ is the smallest $G$-supermartingale dominating $\{X_{t_k^n},k\in \mathbb{N}\}$ and
	\begin{displaymath}
	V^n_0=\sup_{\tau\in \mathcal{T}^n_0}\hat{\mathbb{E}}[X_\tau],
	\end{displaymath}
	where $\mathcal{T}^n_t$ is the collection of all $G$-stopping time taking values in $\mathcal{I}^\infty_n$, greater or equal to $t$ and satisfying equation \eqref{re14}. It is easy to check that for any $t_k^n,t_l^m\in \mathcal{I}^\infty$ with $t_k^n<t_l^m$, we have
	\begin{displaymath}
		\hat{\mathbb{E}}_{t_k^n}[V_{t_l^m}]=\hat{\mathbb{E}}_{t_k^n}[\lim_{M\geq m, M\rightarrow\infty} V^{M}_{t_{l2^{M-m}}^M}]=\lim_{M\geq (m\vee n), M\rightarrow\infty}\hat{\mathbb{E}}_{t_k^n}[V_{t_{l2^{M-m}}^M}^M]\leq \lim_{M\geq (m\vee n), M\rightarrow\infty}V_{t_{k2^{M-n}}^M}^M=V_{t_k^n},
	\end{displaymath}
	which yields that $\{V_t,t\in \mathcal{I}^\infty\}$ is a $G$-supermartingale. If $\{U_t,t\in \mathcal{I}^\infty\}$ is another $G$-supermartingale dominating $\{X_t,t\in \mathcal{I}^\infty\}$, then for $t=t_k^n\in \mathcal{I}^\infty$ and $m\geq n$, it is easy to check that $U_{t_k^n}\geq V^m_{t_k^n}=V^m_{t_{k2^{m-n}}^m}$. It follows that
	\begin{displaymath}
		U_{t_k^n}\geq \lim_{m\geq n, m\rightarrow\infty} V^m_{t_k^n} =V_{t_k^n},
	\end{displaymath}
	which implies that $\{V_t,t\in \mathcal{I}^\infty\}$ is the smallest $G$-supermartingale dominating $\{X_t,t\in \mathcal{I}^\infty\}$. To prove equation \eqref{re26}, first note that $V_0=\lim_{n\rightarrow \infty}V^n_0\leq \sup_{\tau\in \mathcal{T}_0^\infty}\hat{\mathbb{E}}[X_\tau]$. On the other hand, for any $\tau\in \mathcal{T}^\infty_0$, there exists $\tau^n\in \mathcal{T}^n_0$ such that $\tau^n\rightarrow\tau$. Noting that $X$ is continuous and applying Lemma \ref{fatou}, we get
	\begin{displaymath}
		\hat{\mathbb{E}}[X_\tau]=\hat{\mathbb{E}}[\liminf_{n\rightarrow \infty}X_{\tau^n}]\leq \liminf_{n\rightarrow \infty}\hat{\mathbb{E}}[X_{\tau^n}]\leq \liminf_{n\rightarrow \infty}V_0^n=V_0.
	\end{displaymath}
	Since $\tau$ is chosen arbitrarily, the proof is complete.
\end{proof}

\begin{remark}
	In fact, $\{V_t,t\in\mathcal{I}^\infty\}$ can be defined by the following procedure. By equation \eqref{re10} and Proposition \ref{os4}, we can construct a sequence $\{V^{\infty,N}_t,t\in\mathcal{I}^\infty,t\leq N\}$ such that it is the smallest $G$-supermartingale dominating the process $\{X_t,t\in\mathcal{I}^\infty,t\leq N\}$. Besides, by Theorem \ref{os17}, for any $t\in\mathcal{I}^\infty$ with $t\leq N$, we have
	\begin{displaymath}
		V_t^{\infty,N}=\esssup_{\tau\in\mathcal{T}_t^\infty,\tau\leq N}\hat{\mathbb{E}}_t[X_\tau].
	\end{displaymath}
	It is easy to check that for $t\leq N\leq M$, $V_t^{\infty,N}\leq V_t^{\infty,M}$. For each $t=t_k^n\in\mathcal{I}^\infty$, We may define
	\begin{displaymath}
		\tilde{V}_t=\lim_{N\geq t, N\rightarrow\infty}V_t^{\infty,N}.
	\end{displaymath}
	We claim that $V_t=\tilde{V}_t$ for any $t\in \mathcal{I}^\infty$. It suffices to prove that $\{\tilde{V}_t,t\in \mathcal{I}^\infty\}$ is the smallest $G$-supermartingale dominating $\{X_t,t\in\mathcal{I}^\infty\}$. For any $s,t\in\mathcal{I}^\infty$ with $s\leq t$, we have
	\begin{displaymath}
		\hat{\mathbb{E}}_s[\tilde{V}_t]=\hat{\mathbb{E}}_s[\lim_{N\geq t,N\rightarrow \infty}V^{\infty,N}_t]=\lim_{N\geq t, N\rightarrow\infty}\hat{\mathbb{E}}_s[V^{\infty,N}_t]\leq \lim_{N\geq t, N\rightarrow\infty}V^{\infty,N}_s=\tilde{V}_s.
	\end{displaymath}
	Now suppose that $\{U_t,t\in \mathcal{I}^\infty\}$ is the a $G$-supermartingale dominating $\{X_t,t\in\mathcal{I}^\infty\}$, then we have $U_t\geq V_t^{\infty,N}$ for any $t\leq N$. Letting $N\rightarrow\infty$ yields that $U_t\geq \tilde{V}_t$.
	
	By a similar analysis as the proof of Proposition \ref{os8}, we derive that for any $\tau\in\mathcal{T}_t^\infty$ and $t\in \mathcal{I}^\infty$
	\begin{displaymath}
		\hat{\mathbb{E}}_t[X_\tau]=\lim_{N\geq t,N\rightarrow \infty}\hat{\mathbb{E}}_t[X_{\tau\wedge N}]\leq \lim_{N\geq t,N\rightarrow\infty}V_t^{\infty,N}=V_t.
	\end{displaymath}
	On the other hand, if there exists some $\eta\in \mathcal{L}(\Omega_t)$, such that $\eta\geq \hat{\mathbb{E}}_t[X_\tau]$ for any $\tau\in \mathcal{T}_t^\infty$ with some $t\in\mathcal{I}^\infty$, then $\eta\geq V_t^{\infty,N}$ for any $N\geq t$, which implies that $\eta\geq V_t$. By the definition of essential supremum, the above analysis shows that
	\begin{displaymath}
		V_t=\esssup_{\tau\in\mathcal{T}^\infty_t}\hat{\mathbb{E}}_t[X_\tau],
	\end{displaymath}
	for each $t\in\mathcal{I}^\infty$.
\end{remark}

\section{Markovian case}

In this section, we will present some results of optimal stopping under $G$-expectation when the payoff process is Markovian. More precisely, consider the payoff process $\{X^{t,\xi}\}$ generated by the following $G$-SDE:
\begin{equation}\label{re24}
	X_s^{t,\xi}=\xi+\int_t^s b(X_r^{t,\xi})dr+\int_t^s h(X_r^{t,\xi})d\langle B\rangle_r+\int_t^s \sigma(X_r^{t,\xi})dB_r,
\end{equation}
where $\xi\in L_G^p(\Omega_t)$, $p\geq 2$ and $b$, $h$, $\sigma:\mathbb{R}\rightarrow \mathbb{R}$ are deterministic functions satisfying the following:
\begin{description}
	\item[(H1)] There exists a constant $L>0$, such that for any $x,y\in \mathbb{R}$
	\begin{displaymath}
		|b(x)-b(y)|+|h(x)-h(y)|+|\sigma(x)-\sigma(y)|\leq L|x-y|.
	\end{displaymath}
\end{description}
Then we have the following estimates, which can be found in \cite{LPSH,P10}.
\begin{proposition}\label{the1.17}
	Let $\xi,\xi'\in L_G^p(\Omega_t)$ and $p\geq 2$. Then we have, for each $\delta\in[0,T-t]$,
	\begin{align*}
	\hat{\mathbb{E}}_t[\sup_{s\in[t,t+\delta]}|X_{s}^{t,\xi}-X_{s}^{t,\xi'}|^p]&\leq C|\xi-\xi'|^p,\\
	\hat{\mathbb{E}}_t[|X_{t+\delta}^{t,\xi}|^p]&\leq C(1+|\xi|^p),\\
	\hat{\mathbb{E}}_t[\sup_{s\in[t,t+\delta]}|X_s^{t,\xi}-\xi|^p]&\leq C(1+|\xi|^p)\delta^{p/2},
	\end{align*}
	where the constant $C$ depends on $L,G,p$ and $T$.
\end{proposition}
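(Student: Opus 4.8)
The three estimates are the $G$-framework analogues of the classical SDE moment and stability estimates, and the plan is to prove them by the usual programme: take $p$-th powers, control the three integral terms by Burkholder--Davis--Gundy (BDG) type inequalities available under $G$-expectation, exploit assumption (H1) together with the linear growth it implies, and close the argument with Gronwall's inequality. Throughout I would use that $\langle B\rangle$ is absolutely continuous with density bounded by $\bar{\sigma}^2$, so that each $d\langle B\rangle_r$ integral is dominated by $\bar{\sigma}^2$ times the corresponding Lebesgue integral, together with the conditional BDG inequality for $G$-It\^{o} integrals, namely $\hat{\mathbb{E}}_t[\sup_{s\in[t,t+\delta]}|\int_t^s \eta_r dB_r|^p]\leq C\hat{\mathbb{E}}_t[(\int_t^{t+\delta}|\eta_r|^2 dr)^{p/2}]$, which holds in the $G$-calculus (see \cite{P10}).

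For the first estimate I would set $\hat{X}_s=X_s^{t,\xi}-X_s^{t,\xi'}$ and subtract the two copies of \eqref{re24}, so that $\hat{X}$ solves an SDE with initial datum $\hat{\xi}=\xi-\xi'$ and with coefficient increments $b(X^{t,\xi}_r)-b(X^{t,\xi'}_r)$, and similarly for $h$ and $\sigma$. Applying the elementary inequality $|a+b+c+d|^p\leq 4^{p-1}(|a|^p+|b|^p+|c|^p+|d|^p)$, the BDG inequality to the $dB$ term, H\"{o}lder to the $dr$ and $d\langle B\rangle$ terms, and then (H1) to replace each coefficient difference by $L|\hat{X}_r|$, I would arrive at
$$\phi(s):=\hat{\mathbb{E}}_t\Big[\sup_{u\in[t,s]}|\hat{X}_u|^p\Big]\leq C|\hat{\xi}|^p+C\int_t^s \phi(r)\,dr,$$
where I use the sublinear Fubini inequality $\hat{\mathbb{E}}_t[\int_t^s|\hat{X}_r|^p dr]\leq \int_t^s\hat{\mathbb{E}}_t[|\hat{X}_r|^p]dr$ and $|\hat{X}_r|^p\leq \sup_{u\in[t,r]}|\hat{X}_u|^p$. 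Since $|\hat{\xi}|^p$ is $\mathcal{F}_t$-measurable, hence a constant for each fixed $\omega$, Gronwall's inequality applied pathwise yields $\phi(t+\delta)\leq C|\hat{\xi}|^p e^{C\delta}\leq C|\xi-\xi'|^p$.

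The second estimate follows the identical scheme applied directly to $X^{t,\xi}$ rather than to a difference, using the linear growth bounds $|b(x)|\leq |b(0)|+L|x|$ (and likewise for $h,\sigma$) that (H1) provides; this produces $\psi(s):=\hat{\mathbb{E}}_t[\sup_{u\in[t,s]}|X_u^{t,\xi}|^p]\leq C(1+|\xi|^p)+C\int_t^s\psi(r)\,dr$, and again Gronwall closes it. For the third estimate I would not need Gronwall at all: writing $X_s^{t,\xi}-\xi$ as the sum of the three integrals over $[t,s]$, the $dr$ and $d\langle B\rangle$ terms contribute a factor $\delta^p$ after H\"{o}lder while the BDG bound on the $dB$ term contributes $\delta^{p/2}$, and bounding the coefficient moments through the already-established second estimate produces the factor $(1+|\xi|^p)$; since $\delta\leq T$ the dominant power is $\delta^{p/2}$, which is exactly the claimed rate.

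The main obstacle, as usual in this setting, is not the algebra but the $G$-specific analytic inputs: one must be sure the conditional BDG inequality and the conditional (sublinear) Fubini interchange are valid on the relevant function spaces, and that all constants are uniform in $\xi,\xi'$ and $\delta\in[0,T-t]$. These are precisely the technical facts supplied by the $G$-stochastic calculus of \cite{P10} and the a priori estimates of \cite{LPSH}, which is why the statement is quoted from those references; the remaining work is the routine bookkeeping sketched above.
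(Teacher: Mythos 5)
The paper does not give a proof of this proposition: it quotes the estimates from \cite{LPSH,P10}, and your sketch reproduces the standard argument found there (conditional BDG for the $G$-It\^{o} integral, domination of $d\langle B\rangle_r$ by $\bar{\sigma}^2\,dr$, the Lipschitz and linear-growth bounds from (H1), the sublinear Fubini interchange, and Gronwall). Your outline is correct and is essentially the same proof as in the cited references, so nothing beyond the routine bookkeeping you already flag is missing.
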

For simplicity, set $X_s^x:=X_s^{0,x}$. By Lemma 4.1 in \cite{HJL}, we have the following Markov property.

\begin{lemma}\label{os9}
	For each given $\varphi\in C_{b,Lip}(\mathbb{R})$ and $s,t\geq 0$, we have
	\begin{displaymath}
		\hat{\mathbb{E}}_t[\varphi(X_{t+s}^x)]=\hat{\mathbb{E}}[\varphi(X_{s}^{y})]_{y=X_t^x}
	\end{displaymath}
\end{lemma}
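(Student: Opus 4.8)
The plan is to combine the flow property of the $G$-SDE \eqref{re24} with the stationary-independent-increment structure of $G$-Brownian motion and a freezing argument for the conditional $G$-expectation. Throughout, I write $\Psi_s(y)$ for the solution $X_s^{y}=X_s^{0,y}$ regarded as a functional of the starting point $y$ and the driving data $(B_r,\langle B\rangle_r)_{r\in[0,s]}$, so that the goal becomes the identification of $\hat{\mathbb{E}}_t[\varphi(X_{t+s}^{x})]$ with the frozen quantity $\big(\hat{\mathbb{E}}[\varphi(\Psi_s(y))]\big)_{y=X_t^{x}}$.

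First I would establish the flow identity $X_{t+s}^{x}=X_{t+s}^{t,X_t^{x}}$ quasi-surely. Both sides solve \eqref{re24} on $[t,t+s]$ with the same value $X_t^{x}$ at time $t$, so this is immediate from uniqueness of solutions under the Lipschitz hypothesis (H1). It then suffices to compute $\hat{\mathbb{E}}_t[\varphi(X_{t+s}^{t,\eta})]$ for $\eta=X_t^{x}$. Next, since $b,h,\sigma$ are time homogeneous, I would rewrite the equation for $X^{t,\eta}$ in terms of the shifted noise $\tilde B_r:=B_{t+r}-B_t$, for which $\langle\tilde B\rangle_r=\langle B\rangle_{t+r}-\langle B\rangle_t$. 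Then $X_{t+s}^{t,\eta}$ solves an equation of exactly the same form as the one defining $X_s^{\eta}$, but driven by $\tilde B$ instead of $B$; that is, $X_{t+s}^{t,\eta}=\tilde\Psi_s(\eta)$, where $\tilde\Psi$ is the solution map built from $\tilde B$. Under $\hat{\mathbb{E}}$, the increments $\tilde B$ form a $G$-Brownian motion independent of $\mathcal{F}_t$ and carrying the same $G$-law as $(B_r)_{r\in[0,s]}$.

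I would then apply the freezing property of the conditional $G$-expectation (the content of Lemma 4.1 in \cite{HJL}): because $\eta=X_t^{x}\in L_G^p(\Omega_t)$ is $\mathcal{F}_t$-measurable while $\tilde B$ is independent of $\mathcal{F}_t$,
\[
\hat{\mathbb{E}}_t[\varphi(\tilde\Psi_s(X_t^{x}))]=\big(\hat{\mathbb{E}}[\varphi(\tilde\Psi_s(y))]\big)_{y=X_t^{x}}.
\]
By stationarity of the increments, $\hat{\mathbb{E}}[\varphi(\tilde\Psi_s(y))]=\hat{\mathbb{E}}[\varphi(X_s^{y})]$ for each fixed $y$, and combining the three steps yields the asserted identity.

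The main obstacle will be making the freezing step rigorous. To invoke it I must know that the frozen map $\Phi(y):=\hat{\mathbb{E}}[\varphi(X_s^{y})]$ is regular enough (bounded and Lipschitz) to be evaluated at the random point $X_t^{x}$ and to commute with $\hat{\mathbb{E}}_t$. This is exactly where Proposition \ref{the1.17} enters: the estimate $\hat{\mathbb{E}}[\sup_{r\le s}|X_r^{y}-X_r^{y'}|^p]\le C|y-y'|^p$ together with $\varphi\in C_{b,Lip}(\mathbb{R})$ gives $|\Phi(y)-\Phi(y')|\le C|y-y'|$ and boundedness of $\Phi$, so $\Phi\in C_{b,Lip}(\mathbb{R})$ and $\Phi(X_t^{x})\in L_G^p(\Omega_t)$. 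I would first verify the identity for simple $\mathcal{F}_t$-measurable $\eta=\sum_i y_i I_{A_i}$, where it reduces to substituting the constants $y_i$ on each $A_i$, and then pass to general $\eta=X_t^{x}$ by the Lipschitz continuity of $\Phi$ and the $L_G^p$-approximation of $X_t^{x}$ by such simple variables.
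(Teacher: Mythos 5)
Your argument is correct in substance, but note that the paper itself offers no proof of this lemma: it is imported verbatim as Lemma 4.1 of \cite{HJL}, so there is nothing internal to compare against. What you have written is the standard self-contained proof of the Markov property for $G$-SDEs, and it is essentially the argument given in that reference: flow property from pathwise uniqueness under (H1), time-homogeneity of the coefficients to rewrite $X^{t,\eta}_{t+s}$ as the solution map driven by the shifted noise $\tilde B_r=B_{t+r}-B_t$, independence and stationarity of the increments of $G$-Brownian motion, and the freezing property of the conditional $G$-expectation combined with the Lipschitz estimate of Proposition \ref{the1.17} to make $\Phi(y)=\hat{\mathbb{E}}[\varphi(X_s^y)]$ an element of $C_{b,Lip}(\mathbb{R})$.

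One step deserves more care than your sketch gives it: in the simple-function stage, $\sum_i \varphi\bigl(\tilde\Psi_s(y_i)\bigr)I_{A_i}$ is generally \emph{not} in $L_G^1(\Omega)$, since indicators of Borel sets of $\Omega_t$ need not be quasi-continuous, so the conditional $G$-expectation in its original sense does not apply and the identity $\hat{\mathbb{E}}_t\bigl[\sum_i \xi_i I_{A_i}\bigr]=\sum_i I_{A_i}\hat{\mathbb{E}}_t[\xi_i]$ is not free. This is exactly what the extended conditional expectation \eqref{re15} on $L_G^{*1,0,t}(\Omega)$, built on Lemma \ref{os7}, is designed to supply, so you should invoke it explicitly there (or, as in Peng's original treatment, arrange the approximation so that only quasi-continuous simple random variables appear). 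With that point made precise, the passage to general $\eta=X_t^x$ by $L_G^p$-approximation and the Lipschitz continuity of $\Phi$ goes through as you describe.
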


\subsection{Discrete time case}
In this subsection, we first investigate the discrete time case. For a given function $f\in C_{b,Lip}(\mathbb{R})$ and $x\in \mathbb{R}$, consider the following optimal stopping problem
\begin{equation}\label{re19}
     F^N(x):=\sup_{\tau\in \mathcal{T}_{0,N}}\hat{\mathbb{E}}[f(X^x_\tau)].
\end{equation}

\begin{lemma}\label{os10}
	For each $N\in \mathbb{N}$, the function $F^N$ defined by equation \eqref{re19} is bounded and Lipschitz.
\end{lemma}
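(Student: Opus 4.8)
The plan is to handle boundedness and the Lipschitz estimate separately, both by reducing the supremum over stopping times to a uniform pathwise control supplied by the SDE estimates in Proposition \ref{the1.17}. Boundedness is immediate: since $f\in C_{b,Lip}(\mathbb{R})$ is bounded, say $|f|\leq M$, then for every $\tau\in\mathcal{T}_{0,N}$ we have $|f(X^x_\tau)|\leq M$ quasi-surely, hence $-M\leq\hat{\mathbb{E}}[f(X^x_\tau)]\leq M$, and taking the supremum over $\tau$ gives $|F^N(x)|\leq M$ uniformly in $x$.

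For the Lipschitz property I would fix $x,y\in\mathbb{R}$ and an arbitrary $\tau\in\mathcal{T}_{0,N}$, and use the Lipschitz constant $L_f$ of $f$ together with the sub-additivity of $\hat{\mathbb{E}}$ to write
\begin{displaymath}
\hat{\mathbb{E}}[f(X^x_\tau)]-\hat{\mathbb{E}}[f(X^y_\tau)]\leq \hat{\mathbb{E}}[|f(X^x_\tau)-f(X^y_\tau)|]\leq L_f\,\hat{\mathbb{E}}[|X^x_\tau-X^y_\tau|].
\end{displaymath}
The decisive observation is that, since $\tau$ takes values in the finite set $\{0,1,\cdots,N\}$, one has $|X^x_\tau-X^y_\tau|\leq \sup_{s\in[0,N]}|X^x_s-X^y_s|$ quasi-surely, and the right-hand side no longer depends on $\tau$. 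Applying Proposition \ref{the1.17} with $t=0$, $\delta=N$ and some $p\geq2$, together with Hölder's inequality, then yields
\begin{displaymath}
\hat{\mathbb{E}}[|X^x_\tau-X^y_\tau|]\leq \Big(\hat{\mathbb{E}}\big[\sup_{s\in[0,N]}|X^x_s-X^y_s|^p\big]\Big)^{1/p}\leq C^{1/p}|x-y|,
\end{displaymath}
with $C$ independent of $\tau$.

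Combining the two displays gives $\hat{\mathbb{E}}[f(X^x_\tau)]\leq \hat{\mathbb{E}}[f(X^y_\tau)]+L_fC^{1/p}|x-y|$ for every $\tau\in\mathcal{T}_{0,N}$; taking the supremum over $\tau$ on both sides yields $F^N(x)\leq F^N(y)+L_fC^{1/p}|x-y|$, and exchanging the roles of $x$ and $y$ gives the reverse inequality, hence $|F^N(x)-F^N(y)|\leq L_fC^{1/p}|x-y|$.

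The only point that requires care is the uniformity of the constant over all $\tau$: if one tried to estimate $\hat{\mathbb{E}}[|X^x_\tau-X^y_\tau|]$ directly one would need a stability estimate at the random time $\tau$. The hard part is therefore dissolved precisely by the bound $|X^x_\tau-X^y_\tau|\leq \sup_{s\in[0,N]}|X^x_s-X^y_s|$, which replaces the random time by a deterministic pathwise supremum on $[0,N]$ before $\hat{\mathbb{E}}$ is applied, so that Proposition \ref{the1.17} delivers a single constant valid for all stopping times at once.
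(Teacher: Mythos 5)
Your proposal is correct and follows essentially the same route as the paper: bound the difference of suprema by the supremum of $\hat{\mathbb{E}}[|f(X^x_\tau)-f(X^y_\tau)|]$, dominate $|X^x_\tau-X^y_\tau|$ by the pathwise supremum over $[0,N]$ to remove the dependence on $\tau$, and invoke Proposition \ref{the1.17}. The only difference is that you make explicit the Hölder step from the $L^p$ estimate to the $L^1$ bound, which the paper leaves implicit.
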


\begin{proof}
	Since $f\in C_{b,Lip}(\mathbb{R})$, $F^N$ is bounded. Besides, by Proposition \ref{the1.17}, we have
	\begin{displaymath}
		|F^N(x)-F^N(y)|\leq \sup_{\tau\in \mathcal{T}_{0,N}}\hat{\mathbb{E}}[|f(X_\tau^x)-f(X_\tau^y)|]\leq C\hat{\mathbb{E}}[\sup_{t\in[0,N]}|X_t^x-X_t^y|]\leq C|x-y|.
	\end{displaymath}
\end{proof}

Set $\tilde{X}^x_n=f(X_n^x)$. It is easy to check that $\{\tilde{X}^x_n,n\in \mathbb{N}\}$ satisfies Assumption \ref{a2}. Similar with Section 3, we define the following sequence $\{V^N_n,n=0,1,\cdots,N\}$ backwardly.  Let ${V}^N_N(x)=\tilde{X}^x_N$ and
\begin{displaymath}
{V}_n^N(x)=\max\{\tilde{X}^x_n,\hat{\mathbb{E}}_n[{V}^N_{n+1}(x)]\}, \ n\leq N-1.
\end{displaymath}
It is important to note that $V_0^N(x)=F^N(x)$. Moreover, we have the following identity
\begin{equation}\label{re20}
	V_n^N(x)=F^{N-n}(X_n^x), \textrm{ for } 0\leq n\leq N.
\end{equation}
This will be shown in the proof of the next theorem. Now we set
\begin{align*}
	C_n&=\{x\in \mathbb{R}: F^{N-n}(x)>f(x)\},\\
	D_n&=\{x\in \mathbb{R}: F^{N-n}(x)=f(x)\},
\end{align*}
for any $n=0,1,\cdots,N$. Then we define
\begin{displaymath}
	\tau_D^{N,x}=\inf\{0\leq n\leq N: X_n^x\in D_n\}.
\end{displaymath}
Since both $F^{N-n}$ and $f$ are Lipschitz continuous, then $D_n$ is a closed set which implies that $I_{\{X_n^x\in D_n\}}\in L_G^{1^*}(\Omega_n)$. Therefore, we may conclude that $\tau_D^{N,x}$ is a $G$-stopping time. Finally, for any $f\in C_{b,Lip}(\mathbb{R})$, define the following transition operator $T$:
\begin{displaymath}
	T f(x)=\hat{\mathbb{E}}[f(X_1^x)].
\end{displaymath}

\begin{theorem}\label{os11}
	Consider the optimal stopping time problem \eqref{re19}. Then for any $n=1,2,\cdots,N$, the value function $F^n$ satisfies the Wald-Bellman equations
	\begin{equation}\label{re1}
		F^n(x)=\max\{f(x),TF^{n-1}(x)\},
	\end{equation}
	where $F^0(x)=f(x)$. Furthermore, we have
	\begin{description}
		\item[(i)] $\tau_D^{N,x}$ is a $G$-stopping time and optimal in equation \eqref{re19};
		\item[(ii)] The sequence $\{F^{N-n}(X_n^x),n=0,1,\cdots,N\}$ is the smallest $G$-supermartingale which dominates $\{f(X_n^x), n=0,1,\cdots,N\}$ for each $x\in \mathbb{R}$;
		\item[(iii)] The stopped process $\{F^{N-n\wedge \tau_D^{N,x}}(X_{n\wedge\tau_D^{N,x}}^x),n=0,1,\cdots,N\}$ is a $G$-martingale for each $x\in \mathbb{R}$.
	\end{description}
\end{theorem}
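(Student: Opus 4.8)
The plan is to reduce everything to Theorem \ref{os1} applied to the time-homogeneous payoff sequence $\tilde{X}^x_n = f(X^x_n)$, once the key identity \eqref{re20} and the Wald--Bellman equation \eqref{re1} are established. I would organize the argument in three stages. First I would introduce the functions $g_m$ defined recursively by $g_0 = f$ and $g_m(y) = \max\{f(y), Tg_{m-1}(y)\}$, and show by a forward induction on $m$ that each $g_m \in C_{b,Lip}(\mathbb{R})$. The only nontrivial point is that $y \mapsto Tg_{m-1}(y) = \hat{\mathbb{E}}[g_{m-1}(X^y_1)]$ is bounded and Lipschitz: boundedness is immediate from boundedness of $g_{m-1}$, while the Lipschitz estimate follows from $|Tg_{m-1}(y) - Tg_{m-1}(y')| \le \hat{\mathbb{E}}[|g_{m-1}(X^y_1) - g_{m-1}(X^{y'}_1)|] \le L_{m-1}\hat{\mathbb{E}}[|X^y_1 - X^{y'}_1|]$ together with the first estimate in Proposition \ref{the1.17} (and H\"older to pass from the $L^2$ bound to the $L^1$ bound). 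This regularity is precisely what makes the Markov property available in the next step.

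Second, I would prove the identity $V^N_n(x) = g_{N-n}(X^x_n)$ for $0 \le n \le N$ by backward induction on $n$, with $N$ fixed. The base case $n = N$ is $V^N_N(x) = f(X^x_N) = g_0(X^x_N)$. For the induction step, assuming $V^N_{n+1}(x) = g_{N-n-1}(X^x_{n+1})$, I would apply the Markov property of Lemma \ref{os9} with $\varphi = g_{N-n-1}$, $t = n$, $s = 1$ to obtain $\hat{\mathbb{E}}_n[g_{N-n-1}(X^x_{n+1})] = (Tg_{N-n-1})(X^x_n)$; plugging this into the backward recursion for $V^N_n$ yields $V^N_n(x) = \max\{f(X^x_n), Tg_{N-n-1}(X^x_n)\} = g_{N-n}(X^x_n)$. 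Taking $n = 0$ and using $X^x_0 = x$ and $V^N_0(x) = F^N(x)$ gives $F^N = g_N$; hence $F^N(x) = \max\{f(x), TF^{N-1}(x)\}$, which is \eqref{re1}, and the identity becomes exactly \eqref{re20}.

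Third, I would transfer conclusions (ii), (iii) and (i) from Theorem \ref{os1}. Since $V^N_n(x) = F^{N-n}(X^x_n)$ is the Snell envelope of $\{\tilde{X}^x_n\}$, part (1) of Theorem \ref{os1} gives (ii) at once. The essential observation is that $\{X^x_n \in D_n\} = \{F^{N-n}(X^x_n) = f(X^x_n)\} = \{V^N_n(x) = \tilde{X}^x_n\}$, so the hitting time $\tau^{N,x}_D$ coincides with the stopping time $\tau_0 = \inf\{l \ge 0 : V^N_l = \tilde{X}^x_l\}$ of Theorem \ref{os1}(2). Consequently $\tau^{N,x}_D$ is a $G$-stopping time in $\mathcal{T}_{0,N}$, the stopped process $V^N_{n\wedge\tau^{N,x}_D}(x) = F^{N-(n\wedge\tau^{N,x}_D)}(X^x_{n\wedge\tau^{N,x}_D})$ is a $G$-martingale (giving (iii)), and $\hat{\mathbb{E}}[f(X^x_{\tau^{N,x}_D})] = \hat{\mathbb{E}}[\tilde{X}^x_{\tau_0}] = V^N_0(x) = F^N(x)$, so $\tau^{N,x}_D$ attains the supremum in \eqref{re19} and is optimal, which is (i).

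The main obstacle I anticipate is the second stage: one must be careful that the regularity established in the first stage is genuinely needed before the Markov property of Lemma \ref{os9} can be invoked on $g_{N-n-1}$, so the forward induction for $g_m \in C_{b,Lip}$ cannot be short-circuited by a direct appeal to Lemma \ref{os10} (which only yields regularity of $F^m$ \emph{after} the identity $g_m = F^m$ is known). Keeping these two inductions logically separate---regularity by forward induction on the horizon, then the representation by backward induction in time---is the delicate bookkeeping that must be carried out correctly to avoid circularity.
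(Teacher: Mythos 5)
Your proposal is correct and follows essentially the same route as the paper: establish the identity $V^N_n(x)=F^{N-n}(X^x_n)$ by induction using the Markov property of Lemma \ref{os9}, read off the Wald--Bellman equation at $n=0$, and transfer (i)--(iii) from Theorem \ref{os1} after identifying $\tau^{N,x}_D$ with the first hitting time $\tau_0$ of that theorem. The only organizational difference is the source of the Lipschitz regularity needed to invoke Lemma \ref{os9}: you obtain it by a forward induction on the Wald--Bellman iterates $g_m$, whereas the paper takes it directly from Lemma \ref{os10} together with the identification $V^m_0=F^m$ supplied by Theorem \ref{os1}(2), so the circularity you caution against does not actually arise in that shortcut.
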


\begin{proof}
	We claim that $\{V_0^n\}$ satisfies the Wald-Bellman equations. Indeed, it is easy to check that $V_0^0(x)=\tilde{X}_0^x=f(x)$ and
	\begin{displaymath}
		V_0^1(x)=\max\{\tilde{X}_0^x,\hat{\mathbb{E}}[V_1^1(x)]\}=\max\{f(x),\hat{\mathbb{E}}[f(X_1^x)]\}=\max\{f(x),TV_0^0(x)\}.
	\end{displaymath}
	 We assume that for any $n\leq k$, $V_0^n(x)=\max\{f(x),\hat{\mathbb{E}}[V_0^{n-1}(X_1^x)]\}$. We then calculate that
	 \begin{displaymath}
	 		V_k^{k+1}(x)=\max\{f(X_k^x),\hat{\mathbb{E}}_k[f(X_{k+1}^x)]\}=\max\{f(X_k^x),\hat{\mathbb{E}}[f(X_1^y)]_{y=X_k^x}\}=V_0^1(X_k^x),
	 \end{displaymath}
	 and
	 \begin{align*}
	 	V_{k-1}^{k+1}(x)&=\max\{f(X_{k-1}^x),\hat{\mathbb{E}}_{k-1}[V_k^{k+1}(x)]\}=\max\{f(X_{k-1}^x),\hat{\mathbb{E}}[V_0^{1}(X_k^x)]\}\\
	 	&=\max\{f(X_{k-1}^x),\hat{\mathbb{E}}[V_0^1(X_1^y)]_{y=X_{k-1}^x}\}=V_0^2(X_{k-1}^x).
	 \end{align*}
	 By the above procedure, we have
	 \begin{align*}
	 V_{1}^{k+1}(x)&=\max\{f(X_{1}^x),\hat{\mathbb{E}}_{1}[V_2^{k+1}(x)]\}=\max\{f(X_{1}^x),\hat{\mathbb{E}}[V_0^{k-1}(X_2^x)]\}\\
	 &=\max\{f(X_{1}^x),\hat{\mathbb{E}}[V_0^{k-1}(X_1^y)]_{y=X_{1}^x}\}=V_0^k(X_{1}^x),
	 \end{align*}
	 which yields that
	 \begin{displaymath}
	 	V_0^{k+1}(x)=\max\{f(x),\hat{\mathbb{E}}[V_1^{k+1}(x)]\}=\max\{f(x),\hat{\mathbb{E}}[V_0^{k}(X_1^x)]\}.
	 \end{displaymath}
	 Note that the above analysis also establishes that  for any $0\leq j\leq n\leq N$, $V_j^n(x)=V_0^{n-j}(X_j^x)$. Recall that $F^n(x)=V_0^n(x)$, $n=0,1,\cdots,N$, which implies that \eqref{re20} holds and $F^n$ satisfies the Wald-Bellman equation. Applying Theorem \ref{os1}, the conclusions (i)-(iii) hold.
\end{proof}

For the infinite time case, the value function is defined by
\begin{equation}\label{re21}
	F(x)=\sup_{\tau\in \mathcal{T}_0}\hat{\mathbb{E}}[f(X_\tau^x)],
\end{equation}
where $f\in C_{b,Lip}(\mathbb{R})$. Let $V_n^\infty(x)=\lim_{N\rightarrow\infty}V_n^N(x)$.  By Proposition \ref{os6}, we have
\begin{displaymath}
	F(x)=V_0^\infty(x)=\lim_{N\rightarrow\infty}F^N(x),
\end{displaymath}
which implies that $F$ is a bounded lower semicontinuous function. Then letting $N\rightarrow \infty$ in equation \eqref{re20}, it follows that
\begin{displaymath}
     V_n^\infty(x)=F(X_n^x), \textrm{ for } n\in \mathbb{N}.
\end{displaymath}

Set
\begin{align*}
C&=\{x\in \mathbb{R}: F(x)>f(x)\},\\
D&=\{x\in \mathbb{R}: F(x)=f(x)\}.
\end{align*}
Since $F$ is lower semicontinuous, $D$ is a closed subset of $\mathbb{R}$. Then we define
\begin{displaymath}
\tau_D^x=\inf\{n\geq 0: X_n^x\in D\}.
\end{displaymath}
Similar with the finite time case, $\tau_D^x$ is a $G$-stopping time.

\begin{definition}\label{d1}
      A measurable function $F:\mathbb{R}\rightarrow\mathbb{R}$ is said to be superharmonic, if for all $x\in \mathbb{R}$,
      \begin{displaymath}
      	TF(x)\leq F(x).
      \end{displaymath}
\end{definition}

\begin{remark}
	We should point out that there is an implicit assumption in the above definition that $F(X_1^x)\in\mathbb{L}^1(\Omega)$ for each $x\in \mathbb{R}$.
\end{remark}

\begin{lemma}\label{os12}
	Suppose that $F$ is lower semicontinuous and bounded from below (resp. upper semicontinuous and bounded from above). Then $F$ is superharmonic if and only if $\{F(X_n^x),n\in \mathbb{N}\}$ is a $G$-supermartingale for any $x\in \mathbb{R}$.
\end{lemma}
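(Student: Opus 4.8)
The plan is to prove the two implications separately, since one is immediate while the other carries all the content. For the direction ``$G$-supermartingale $\Rightarrow$ superharmonic'', I would specialise the supermartingale inequality to $n=0$: because $X_0^x=x$ is deterministic, $\hat{\mathbb{E}}_0=\hat{\mathbb{E}}$, so $\hat{\mathbb{E}}[F(X_1^x)]\le F(X_0^x)=F(x)$, i.e. $TF(x)\le F(x)$ for every $x\in\mathbb{R}$, which is exactly superharmonicity in the sense of Definition \ref{d1}. This uses nothing beyond the definitions, the integrability needed to speak of the supermartingale being already part of the hypothesis.

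The reverse implication is the substance, and its key step is to upgrade the Markov property of Lemma \ref{os9}, which is stated only for $\varphi\in C_{b,Lip}(\mathbb{R})$, to the merely lower semicontinuous, bounded-from-below function $F$. To this end I would approximate $F$ from below by the inf-convolutions $F_m(x):=\inf_{y\in\mathbb{R}}\{F(y)+m|x-y|\}$, which are $m$-Lipschitz, satisfy $F_m\le F$, and, since $F$ is lower semicontinuous and bounded below, increase pointwise to $F$. Truncating, $\psi_m:=F_m\wedge m\in C_{b,Lip}(\mathbb{R})$ and $\psi_m\uparrow F$. Applying Lemma \ref{os9} to each $\psi_m$ gives $\hat{\mathbb{E}}_n[\psi_m(X_{n+1}^x)]=T\psi_m(X_n^x)$ q.s. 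Letting $m\to\infty$ and invoking the conditional monotone convergence theorem on the left, together with $T\psi_m(z)\uparrow TF(z)$ for each fixed $z$ on the right, yields the identity $\hat{\mathbb{E}}_n[F(X_{n+1}^x)]=TF(X_n^x)$ q.s. Here one uses that $\psi_m(X_n^x)\in L_G^1(\Omega_n)$ increases to $F(X_n^x)$, which lies in $L_G^{1^*_*}(\Omega_n)\subset\bar{L}_G^{1^*_*}(\Omega_n)$ by the integrability recorded in the remark preceding the lemma, so that $\hat{\mathbb{E}}_n[F(X_{n+1}^x)]$ is well defined.

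With this identity in hand, superharmonicity $TF(z)\le F(z)$ evaluated at the random point $z=X_n^x$ gives $\hat{\mathbb{E}}_n[F(X_{n+1}^x)]=TF(X_n^x)\le F(X_n^x)$ q.s., which is precisely the $G$-supermartingale property of $\{F(X_n^x),n\in\mathbb{N}\}$. The parenthetical case of an upper semicontinuous $F$ bounded from above is handled by the mirror-image argument: approximate $F$ from above by the sup-convolutions $F^m(x)=\sup_{y\in\mathbb{R}}\{F(y)-m|x-y|\}$, truncated from below, which decrease to $F$, and pass to the limit using the downward convergence of $\hat{\mathbb{E}}_n$ that is valid on decreasing sequences of $L_G^1$ random variables.

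I expect the main obstacle to be exactly this interchange of limit and conditional $G$-expectation, and it is what forces the two separate hypotheses. Unlike the classical case, monotone convergence under $\hat{\mathbb{E}}$ is not symmetric: the upward version is available for increasing sequences (in the spirit of the Fatou-type Lemma \ref{fatou}), whereas the ``$\limsup$'' version fails in general. Thus a lower semicontinuous $F$ must be approached from below and an upper semicontinuous $F$ from above, so that in each case the monotonicity of the approximating $C_{b,Lip}$ functions matches the direction in which the (conditional) monotone convergence theorem holds. This matching is the only delicate point; the remaining verifications, that the inf/sup-convolutions are Lipschitz, bounded after truncation, and converge monotonically to $F$, are routine.
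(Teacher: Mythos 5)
Your proposal is correct and follows essentially the same route as the paper: approximate $F$ from below by a sequence in $C_{b,Lip}(\mathbb{R})$ increasing to $F$, apply the Markov property (Lemma \ref{os9}) to the approximants, and pass to the limit by monotone convergence of the (conditional) $G$-expectation to obtain $\hat{\mathbb{E}}_n[F(X_{n+1}^x)]=TF(X_n^x)$, the easy converse being the specialisation to $n=0$. The only difference is cosmetic: the paper simply asserts the existence of the increasing $C_{b,Lip}$ approximation, whereas you construct it explicitly via truncated inf-convolutions, and you spell out the symmetric upper-semicontinuous case.
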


\begin{proof}
	Since $F$ is lower semicontinuous and bounded from below, there exists a sequence $\{F^m,m\in \mathbb{N}\}\subset C_{b,Lip}(\mathbb{R})$, such that $F^m\uparrow F$. For the ``if" part, suppose that $F$ is superharmonic. Note that $F^m(X_n^x)\in L_G^1(\Omega_n)$ and $F^m(X_n^x)\uparrow F(X_n^x)$ for any $m,n\in\mathbb{N}$ and $x\in \mathbb{R}$. We can calculate that
	\begin{equation}\label{re23}
	\begin{split}
	F(X_n^x)&\geq TF(X_n^x)=\hat{\mathbb{E}}[F(X_1^y)]_{y=X_n^x}=\lim_{m\rightarrow \infty}\hat{\mathbb{E}}[F^m(X_1^y)]_{y=X_n^x}\\
	&=\lim_{m\rightarrow \infty}\hat{\mathbb{E}}_n[F^m(X_{n+1}^x)]=\hat{\mathbb{E}}_n[F(X_{n+1}^x)],
	\end{split}
	\end{equation}
	for all $n\in \mathbb{N}$ and $x\in \mathbb{R}$, which implies $\{F(X_n^x),n\in \mathbb{N}\}$ is a $G$-supermartingale. For the ``only if" part, note that $F(X_n^x)\geq \hat{\mathbb{E}}_n[F(X_{n+1}^x)]$ holds for any $n\in \mathbb{N}$. Letting $n=0$ yields that $F$ is superharmonic.
\end{proof}

\begin{theorem}\label{os13}
	Consider the optimal stopping time problem \eqref{re21}. Then  the value function $F$ satisfies the Wald-Bellman equations
	\begin{equation}\label{re2}
	F(x)=\max\{f(x),TF(x)\},
	\end{equation}
	Furthermore, assume that for any $x\in\mathbb{R}$, $\tau_D^x$ satifies \eqref{re14}. Then we have
	\begin{description}
		\item[(i)] $\tau_D^x$ is a $G$-stopping time and optimal in equation \eqref{re21};
		\item[(ii)] The value function $F$ is the smallest superharmonic function which dominates $f$ on $\mathbb{R}$;
		\item[(iii)] The stopped process $\{F(X_{n\wedge\tau_D^x}^x),n=0,1,\cdots,N\}$ is a $G$-martingale for each $x\in \mathbb{R}$.
	\end{description}
\end{theorem}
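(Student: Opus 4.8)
The plan is to derive everything from the finite-horizon Markovian result, Theorem \ref{os11}, together with the abstract infinite-horizon discrete result, Proposition \ref{os8}, after exploiting the identification $V_n^\infty(x)=F(X_n^x)$ established just before the statement.

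First I would establish the Wald-Bellman equation \eqref{re2} by passing to the limit in \eqref{re1}. Recall from Theorem \ref{os11} that $F^n(x)=\max\{f(x),TF^{n-1}(x)\}$ with $F^0=f$, and that $F^{n}\uparrow F$. Since each $F^{n-1}$ is bounded and Lipschitz (Lemma \ref{os10}), we have $F^{n-1}(X_1^x)\in L_G^1(\Omega_1)$ and $F^{n-1}(X_1^x)\uparrow F(X_1^x)$ q.s.; by the monotone convergence theorem under $G$-expectation this gives $TF^{n-1}(x)=\hat{\mathbb{E}}[F^{n-1}(X_1^x)]\uparrow \hat{\mathbb{E}}[F(X_1^x)]=TF(x)$. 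Taking $n\to\infty$ in \eqref{re1} then yields $F(x)=\max\{f(x),TF(x)\}$. In particular $F\geq f$ and $F\geq TF$, so $F$ is superharmonic and dominates $f$.

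Next, for the minimality statement (ii), I would argue by induction rather than invoke Lemma \ref{os12}, so as to avoid imposing extra regularity on the competitor. Let $G$ be any superharmonic function with $G\geq f$. The base case $G\geq f=F^0$ is the hypothesis. Assuming $G\geq F^{n-1}$ on $\mathbb{R}$, the monotonicity of $\hat{\mathbb{E}}$, hence of $T$, gives $TG\geq TF^{n-1}$, so that $G\geq\max\{f,TG\}\geq\max\{f,TF^{n-1}\}=F^n$, using $G\geq TG$ from superharmonicity. Therefore $G\geq F^n$ for all $n$, and letting $n\to\infty$ gives $G\geq F$. Combined with the previous paragraph, $F$ is the smallest superharmonic function dominating $f$. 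Finally, for (i) and (iii) I would transfer the abstract result. Setting $\tilde X_n^x=f(X_n^x)$, the sequence $\{\tilde X_n^x\}$ satisfies Assumption \ref{a2} because $f$ is bounded. The first hitting time associated with this process in Proposition \ref{os8} is $\tau_0=\inf\{l\geq 0:V_l^\infty(x)=\tilde X_l^x\}=\inf\{l\geq 0:F(X_l^x)=f(X_l^x)\}=\inf\{l\geq 0:X_l^x\in D\}=\tau_D^x$, a $G$-stopping time since $D$ is closed. Under the standing assumption that $\tau_D^x$ satisfies \eqref{re14}, Proposition \ref{os8}(iii) with $j=0$ gives $F(x)=V_0^\infty(x)=\hat{\mathbb{E}}[\tilde X_{\tau_0}^x]=\hat{\mathbb{E}}[f(X_{\tau_D^x}^x)]$, proving optimality in \eqref{re21}, while Proposition \ref{os8}(ii) shows that $\{F(X^x_{n\wedge\tau_D^x})\}$ is a $G$-martingale.

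The main technical point is the limit passage in the first paragraph: one must verify that the monotone convergence theorem genuinely applies to the increasing sequence $F^{n-1}(X_1^x)$, i.e. that the relevant quantities live in the spaces where that theorem is available (here $F^{n-1}(X_1^x)\in L_G^1$ and the limit $F(X_1^x)$ is bounded and lies in $L_G^{1^*_*}(\Omega_1)$). The remaining steps are essentially bookkeeping once the identification $V_n^\infty(x)=F(X_n^x)$ and the matching of $\tau_D^x$ with the hitting time of Proposition \ref{os8} are in place.
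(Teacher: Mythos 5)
Your proof is correct. Note that the paper actually states Theorem \ref{os13} without supplying a proof, so there is nothing to compare line by line; your argument assembles precisely the ingredients the paper has prepared for this purpose (the limit passage in the finite-horizon Wald--Bellman equation \eqref{re1} via monotone convergence, the identification $V_n^\infty(x)=F(X_n^x)$, and the transfer of Proposition \ref{os8} to the payoff $\tilde X_n^x=f(X_n^x)$ with $\tau_0=\tau_D^x$), and is the intended route. Your inductive proof of minimality in (ii), which needs only the one-step inequality $TG\leq G$ and monotonicity of $T$ rather than Lemma \ref{os12}, is a clean self-contained way to handle competitors without imposing semicontinuity on them.
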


It is easy to check that for any $n=1,2,\cdots,N$, there exists a unique solution to the Wald-Bellman equation \eqref{re1}. However, when the time horizon is infinite, there may be many solutions to the Wald-Bellman equation \eqref{re2}. For example, if $f(x)\equiv c$, the any $F(x)=C\geq c$ solves this equation. In the following, we give a sufficient condition under which the solution to equation \eqref{re2} is unique.
\begin{theorem}\label{os14}
     Suppose that $G:\mathbb{R}\rightarrow\mathbb{R}$ is lower semicontinuous and bounded from below satisfying the Wald-Bellman equation
     \begin{displaymath}
     	G(x)=\max\{f(x), TG(x)\},
     \end{displaymath}
     for $x\in \mathbb{R}$. Furthermore, we assume that, for some $p>1$,
     \begin{displaymath}
     	\hat{\mathbb{E}}[\sup_{n\in \mathbb{N}}|G(X_n^x)|^p]<\infty,
     \end{displaymath}
     for any $x\in \mathbb{R}$. If the following ``boundary condition at infinity" holds,
     \begin{equation}\label{re22}
     	\limsup_{n\rightarrow\infty}G(X_n^x)=\limsup_{n\rightarrow\infty}F(X_n^x), \ \ \ q.s.
     \end{equation}
     for any $x\in \mathbb{R}$, then $G$ equals to the value function $F$.
\end{theorem}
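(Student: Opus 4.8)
The plan is to establish the two inequalities $F\le G$ and $G\le F$ separately. The inequality $F\le G$ is immediate from the minimality already proved: the Wald--Bellman equation gives $G(x)=\max\{f(x),TG(x)\}\ge TG(x)$ and $G(x)\ge f(x)$, so $G$ is superharmonic and dominates $f$. Since $G$ is lower semicontinuous and bounded from below, Lemma~\ref{os12} shows $\{G(X_n^x),n\in\mathbb{N}\}$ is a $G$-supermartingale dominating $\{f(X_n^x),n\in\mathbb{N}\}$; as $f\in C_{b,Lip}(\mathbb{R})$ the latter satisfies Assumption~\ref{a2}, so by Proposition~\ref{os5} the process $\{F(X_n^x)\}$ is the \emph{smallest} such $G$-supermartingale, and evaluating at $n=0$ gives $F(x)\le G(x)$ for every $x$.

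For the reverse inequality I would use that $G$ itself solves the Wald--Bellman equation. Set $\hat D=\{x:G(x)=f(x)\}$, which is closed because $G-f$ is lower semicontinuous, and let $\hat\tau=\inf\{n\ge 0:X_n^x\in\hat D\}$; as in the text this is a $G$-stopping time. On $\{n<\hat\tau\}$ one has $X_n^x\in\{G>f\}$, whence $G(X_n^x)=TG(X_n^x)=\hat{\mathbb{E}}_n[G(X_{n+1}^x)]$ by the Markov property (Lemma~\ref{os9}, extended to $G$ by monotone approximation as in the proof of Lemma~\ref{os12}). An argument parallel to the proof of Theorem~\ref{os1}(2) and Proposition~\ref{os8} then shows that $\{G(X_{n\wedge\hat\tau}^x)\}$ is a $G$-martingale, so that for every $N$,
\[
G(x)=\hat{\mathbb{E}}[G(X_{N\wedge\hat\tau}^x)]
=\hat{\mathbb{E}}\big[f(X_{\hat\tau}^x)I_{\{\hat\tau\le N\}}+G(X_N^x)I_{\{\hat\tau>N\}}\big],
\]
where I used $G(X_{\hat\tau}^x)=f(X_{\hat\tau}^x)$ on $\{\hat\tau<\infty\}$.

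Next I would compare this with $F$. Since $f\le F$ and $\{F(X_n^x)\}$ is a $G$-supermartingale, the bound $f(X_{\hat\tau}^x)I_{\{\hat\tau\le N\}}+F(X_N^x)I_{\{\hat\tau>N\}}\le F(X_{N\wedge\hat\tau}^x)$ together with optional sampling at the bounded $G$-stopping time $N\wedge\hat\tau$ (cf.\ Theorem~\ref{os1}) yields $\hat{\mathbb{E}}[f(X_{\hat\tau}^x)I_{\{\hat\tau\le N\}}+F(X_N^x)I_{\{\hat\tau>N\}}]\le F(x)$. Writing $G(X_N^x)=F(X_N^x)+(G-F)(X_N^x)$ in the representation above and invoking sub-additivity of $\hat{\mathbb{E}}$, I obtain
\[
G(x)-F(x)\le \hat{\mathbb{E}}\big[(G(X_N^x)-F(X_N^x))I_{\{\hat\tau>N\}}\big],
\]
where the integrand is non-negative because $F\le G$.

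It remains to let $N\to\infty$ in this estimate, and this is the heart of the proof. By the moment hypothesis and Song's $L^p$-estimate (Theorem~3.4 in \cite{S11}), both $\{F(X_n^x)\}$ and $\{G(X_n^x)\}$ are $G$-supermartingales bounded in $L^p$ with $p>1$; invoking the q.s.\ convergence of such processes, their limits coincide on $\{\hat\tau=\infty\}$ by the boundary condition \eqref{re22}, $\limsup_nG(X_n^x)=\limsup_nF(X_n^x)$, while on $\{\hat\tau<\infty\}$ the factor $I_{\{\hat\tau>N\}}$ eventually vanishes; hence $(G-F)(X_N^x)I_{\{\hat\tau>N\}}\to 0$ q.s. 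The main obstacle is that q.s.\ convergence does not pass through $\hat{\mathbb{E}}$: both the reverse Fatou lemma and the bounded convergence theorem fail under $G$-expectation (see the Remark following Lemma~\ref{fatou}). To close the gap I would exploit the strict integrability $p>1$: a H\"older estimate controls the tail $\hat{\mathbb{E}}[(G-F)(X_N^x)I_{\{(G-F)(X_N^x)>M\}}]$ uniformly in $N$, reducing the problem to the genuinely $L^1(\hat{\mathbb{E}})$-convergence of the truncated, $L^p$-dominated family, which is where the three hypotheses---lower semicontinuity with the resulting supermartingale convergence, the uniform $L^p$-bound, and the boundary condition \eqref{re22}---are simultaneously needed. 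Establishing this last convergence, rather than the algebraic reductions preceding it, is the real difficulty; once $\hat{\mathbb{E}}[(G-F)(X_N^x)I_{\{\hat\tau>N\}}]\to 0$ is known, letting $N\to\infty$ gives $G\le F$ and hence $G=F$.
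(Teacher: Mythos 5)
Your first half ($F\le G$) is fine and matches the paper. The second half, however, stops exactly where the work begins, and the route you choose makes the remaining step genuinely problematic rather than merely technical. You stop at the \emph{exact} contact set $\hat D=\{G=f\}$ and reduce the claim to $\hat{\mathbb{E}}[(G-F)(X_N^x)I_{\{\hat\tau>N\}}]\to 0$. There is no hypothesis forcing $c(\hat\tau>N)\to 0$, so you cannot kill the indicator by a capacity estimate; and your fallback — truncate by the uniform $L^p$ bound and then argue that the bounded, q.s.-null remainder has vanishing $G$-expectation — is precisely the mode of convergence that fails in this framework: the paper's own remark after Lemma \ref{fatou} exhibits $X_n=I_{\{\langle B\rangle_1\in(1-\frac1n,1)\}}$ with $X_n\to 0$ q.s., $0\le X_n\le 1$, yet $\hat{\mathbb{E}}[X_n]=1$ for all $n$. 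So the step you defer as "the real difficulty" is not closable along the path you set up, and you have not actually shown where the boundary condition \eqref{re22} enters quantitatively.

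The paper's proof avoids this by relaxing the stopping region: it uses $\tau_\varepsilon^x=\inf\{n\ge 0:\,G(X_n^x)\le f(X_n^x)+\varepsilon\}$, whose target set $D_\varepsilon=\{G\le f+\varepsilon\}$ is closed by lower semicontinuity, and the boundary condition \eqref{re22} is used exactly to guarantee that $\tau_\varepsilon^x$ satisfies \eqref{re14}, i.e.\ $c(\tau_\varepsilon^x>N)\to 0$. The same martingale computation you perform then gives $\hat{\mathbb{E}}[G(X^x_{\tau_\varepsilon^x\wedge n})]=G(x)$, and the passage $n\to\infty$ is done by the H\"older/capacity estimate
\begin{displaymath}
\hat{\mathbb{E}}\bigl[\,|G(X^x_{\tau_\varepsilon^x})-G(X^x_{\tau_\varepsilon^x\wedge n})|\,\bigr]\le \hat{\mathbb{E}}\bigl[2\sup_m|G(X_m^x)|\,I_{\{\tau_\varepsilon^x>n\}}\bigr]\le C\bigl(c(\tau_\varepsilon^x>n)\bigr)^{\frac{p-1}{p}},
\end{displaymath}
which needs only the decay of a capacity, not any convergence theorem for random variables. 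Finally $F(x)\ge\hat{\mathbb{E}}[f(X^x_{\tau_\varepsilon^x})]\ge\hat{\mathbb{E}}[G(X^x_{\tau_\varepsilon^x})]-\varepsilon=G(x)-\varepsilon$, and $\varepsilon\downarrow 0$. The missing idea in your proposal is this $\varepsilon$-relaxation of the hitting time, which converts the limit you cannot take into a capacity estimate you can.
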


\begin{proof}
Without loss of generality, we assume that $G\geq 0$. Since $G$ satisfy the Wald-Bellman equation, it is superharmonic and $G\geq f$. By Theorem \ref{os13}, we have $G\geq F$. In the following, we show the converse inequality. Define the random time
\begin{displaymath}
\tau_\varepsilon^x=\inf\{n\geq 0: G(X_n^x)\leq f(X_n^x)+\varepsilon\}=\inf\{n\geq 0: X_n^x\in D_\varepsilon\},
\end{displaymath}
where $\varepsilon>0$ and
\begin{displaymath}
D_\varepsilon=\{x\in\mathbb{R}: G(x)\leq f(x)+\varepsilon\}.
\end{displaymath}
It is a closed subset of $\mathbb{R}$ due to the lower semicontinuity of $G$. Therefore, $\tau_\varepsilon^x$ is a $G$-stopping time for any $\varepsilon>0$ and $x\in \mathbb{R}$. Besides, by \eqref{re22}, $\tau_\varepsilon^x$ satisfy condition \eqref{re14}. We claim that $\{G(X^x_{\tau_\varepsilon^x \wedge n}),n\in \mathbb{N}\}$ is a $G$-martingale for all $x\in \mathbb{R}$. By a similar analysis in the proof of Theorem \ref{os8}, we have $G(X^x_{\tau_\varepsilon^x \wedge n})\in L_G^{1^*_*}(\Omega_n)$, for each $n\in \mathbb{N}$. Note that $I_{\{\tau_\varepsilon^x\leq n-1\}}\in L_G^{1^*}(\Omega_{n-1})$. It follows that $I_{\{\tau_\varepsilon^x\geq n\}}\in L_G^{1^*_*}(\Omega_{n-1})$ and $G(X^x_{\tau_\varepsilon^x\wedge (n-1)})I_{\{\tau_\varepsilon^x\leq n-1\}}\in L_G^{1^*_*}(\Omega_{n-1})$. We can calculate that, for each $n\geq 1$ and $x\in \mathbb{R}$,
\begin{align*}
\hat{\mathbb{E}}_{n-1}[G(X^x_{\tau_\varepsilon^x \wedge n})]&=\hat{\mathbb{E}}_{n-1}[G(X^x_{n})I_{\{\tau_\varepsilon^x\geq n\}}]+G(X^x_{\tau_\varepsilon^x\wedge (n-1)})I_{\{\tau_\varepsilon^x\leq n-1\}}\\
&=\hat{\mathbb{E}}_{n-1}[G(X_n^x)]I_{\{\tau_\varepsilon^x\geq n\}}+G(X^x_{\tau_\varepsilon^x})I_{\{\tau_\varepsilon^x\leq n-1\}}\\
&=TG(X_{n-1}^x)I_{\{\tau_\varepsilon^x\geq n\}}+G(X^x_{\tau_\varepsilon^x})I_{\{\tau_\varepsilon^x\leq n-1\}}\\
&=G(X_{n-1}^x)I_{\{\tau_\varepsilon^x\geq n\}}+G(X^x_{\tau_\varepsilon^x})I_{\{\tau_\varepsilon^x\leq n-1\}}\\
&=G(X^x_{\tau_\varepsilon^x \wedge (n-1)}),
\end{align*}
where in the third equality we use equation \eqref{re23}. Therefore, we have
\begin{displaymath}
\hat{\mathbb{E}}[G(X^x_{\tau_\varepsilon^x \wedge n})]=G(x),
\end{displaymath}
for all $n\geq 0$ and $x\in\mathbb{R}$. A similar analysis as \eqref{re} shows that
\begin{displaymath}
\hat{\mathbb{E}}[G(X^x_{\tau_\varepsilon^x })]=\lim_{n\rightarrow \infty}\hat{\mathbb{E}}[G(X^x_{\tau_\varepsilon^x \wedge n})]=G(x),
\end{displaymath}
for all $x\in \mathbb{R}$. Recalling \eqref{re21} and the definition of $\tau_\varepsilon^x$, we derive that
\begin{align*}
F(x)\geq \hat{\mathbb{E}}[f(X^x_{\tau_\varepsilon^x})]\geq \hat{\mathbb{E}}[G(X_{\tau_\varepsilon^x}^x)]-\varepsilon=G(x)-\varepsilon.
\end{align*}
Since $\varepsilon$ can be arbitrarily small, we get $F\geq G$, which completes the proof.
\end{proof}

\subsection{Continuous time case}

In this section, we investigate the optimal stopping problem in the continuous time case when the payoff process is Markovian satisfying equation \eqref{re24}. Similar with Definition \ref{d1}, a basic concept is the following:

\begin{definition}
	A measurable function $f:\mathbb{R}\rightarrow \mathbb{R}$ is called excessive (w.r.t $X_t$), if
	\begin{displaymath}
	f(x)\geq \hat{\mathbb{E}}[f(X_t^x)] \textrm{ for all } t\geq 0, x\in \mathbb{R}.
	\end{displaymath}
\end{definition}

\begin{definition}
	A measurable function $f:\mathbb{R}\rightarrow \mathbb{R}$ is called superharmonic (w.r.t $X_t$), if
	\begin{displaymath}
	f(x)\geq \hat{\mathbb{E}}[f(X_\tau^x)]
	\end{displaymath}
	for all $G$-stopping time $\tau$ satisfying equation \eqref{re14} and all $x\in\mathbb{R}$.
\end{definition}

\begin{remark}
	It is importiant to note that in the above definitions, there is an implicite assumption that $f(X_\tau^x)\in \mathbb{L}^1(\Omega)$ for each $G$-stopping time $\tau$ and $x\in\mathbb{R}$.
\end{remark}

It is easy to check that a superharmonic function is  excessive. The following proposition shows that the converse is true for some typical $f$.

\begin{proposition}\label{os15}
	Suppose that $f$ is bounded and lower semicontinuous. If $f$ is excessive, then it is also superharmonic.
\end{proposition}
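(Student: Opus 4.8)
The plan is to reduce continuous-time superharmonicity to the dyadic-grid case, where the excessive hypothesis can be converted into a $G$-supermartingale property and the discrete optional sampling of Theorem \ref{os1} applies, and then to pass to a general $G$-stopping time by approximating it from above and invoking Fatou's lemma. Throughout, boundedness of $f$ settles every integrability issue, while lower semicontinuity is exactly what will orient the final limiting inequality in the admissible direction. As a first step I would upgrade the excessive inequality to a supermartingale inequality along any dyadic grid. Since $f$ is lower semicontinuous and bounded from below, as in the proof of Lemma \ref{os12} I choose $\{f^m\}\subset C_{b,Lip}(\mathbb{R})$ with $f^m\uparrow f$. Applying the Markov property (Lemma \ref{os9}) to each $f^m$ and letting $m\to\infty$ by the monotone convergence theorem for conditional $G$-expectation gives, for grid points $t_k<t_{k+1}$,
\[
\hat{\mathbb{E}}_{t_k}[f(X_{t_{k+1}}^x)]=\hat{\mathbb{E}}[f(X_{t_{k+1}-t_k}^y)]_{y=X_{t_k}^x}\le f(X_{t_k}^x),
\]
the last inequality being excessiveness evaluated at the point $y=X_{t_k}^x$. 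Hence for any grid $0=t_0<t_1<\cdots$ the process $\{f(X_{t_k}^x)\}_k$ is a $G$-supermartingale.

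Next, for a $G$-stopping time $\tau'$ taking finitely many values in a dyadic grid, the computation establishing \eqref{re8} in the proof of Theorem \ref{os1}, applied to the $G$-supermartingale $\{f(X_{t_k}^x)\}$, yields $\hat{\mathbb{E}}[f(X_{\tau'}^x)]\le f(x)$. If instead $\tau'$ is grid-valued and satisfies \eqref{re14} but takes infinitely many values, I would truncate at level $N$: since $f$ is bounded,
\[
\hat{\mathbb{E}}[|f(X_{\tau'}^x)-f(X_{\tau'\wedge N}^x)|]\le 2\|f\|_\infty\, c(\tau'>N)\to0,
\]
so letting $N\to\infty$ preserves $\hat{\mathbb{E}}[f(X_{\tau'}^x)]\le f(x)$.

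Finally, for an arbitrary $G$-stopping time $\tau$ satisfying \eqref{re14}, I would set $\tau_n=\lceil 2^n\tau\rceil/2^n$, so that $\tau_n\downarrow\tau$, each $\tau_n$ is a grid-valued $G$-stopping time still satisfying \eqref{re14} (since $\tau_n\le\tau+1$), and $\hat{\mathbb{E}}[f(X_{\tau_n}^x)]\le f(x)$ by the previous step. Because $t\mapsto X_t^x$ has continuous paths, $X_{\tau_n}^x\to X_\tau^x$ quasi-surely, and lower semicontinuity gives $f(X_\tau^x)\le\liminf_{n}f(X_{\tau_n}^x)$. As $f$ is bounded below, Lemma \ref{fatou} then yields
\[
\hat{\mathbb{E}}[f(X_\tau^x)]\le\hat{\mathbb{E}}[\liminf_{n}f(X_{\tau_n}^x)]\le\liminf_{n}\hat{\mathbb{E}}[f(X_{\tau_n}^x)]\le f(x),
\]
which is precisely superharmonicity.

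The main obstacle is exactly this last limit. The ``$\limsup$'' form of Fatou's lemma fails under $G$-expectation (see the remark following Lemma \ref{fatou}), so one cannot simply pass to the limit inside $\hat{\mathbb{E}}[f(X_{\tau_n}^x)]$; lower semicontinuity is indispensable because it converts the convergence $X_{\tau_n}^x\to X_\tau^x$ into the inequality $f(X_\tau^x)\le\liminf_{n}f(X_{\tau_n}^x)$, which is compatible with the only available (``$\liminf$'') Fatou inequality. A secondary technical point is keeping track of which of the spaces $L_G^{1^*},L_G^{*1},L_G^{1^*_*}$ the variables $f(X_{t_k}^x)$ inhabit, so that the conditional $G$-expectation manipulations (monotone convergence and the pull-out of $\mathcal{B}(\Omega_{t_k})$-measurable indicators) are legitimate; boundedness of $f$ together with the approximation $f^m\uparrow f$ resolves this.
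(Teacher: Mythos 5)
Your proposal is correct and follows essentially the same three-step route as the paper's proof: optional sampling for discrete (grid-valued) $G$-stopping times via the Markov property and excessiveness, truncation at level $N$ using \eqref{re14} and boundedness, and approximation of a general $\tau$ from above combined with lower semicontinuity and the ``$\liminf$'' Fatou lemma (Lemma \ref{fatou}). The only difference is organizational --- the paper first carries out all three steps for $f\in C_{b,Lip}(\mathbb{R})$ and then redoes them for bounded lower semicontinuous $f$ via $f_n\uparrow f$, whereas you work with the lower semicontinuous $f$ directly after establishing \eqref{re25} --- which does not change the substance of the argument.
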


\begin{proof}
	We first prove this result for $f\in C_{b,Lip}(\mathbb{R})$. Without loss of generality, we assume that $f\geq 0$.
	
	Step 1. Suppose that $\tau$ is a discrete $G$-stopping time of the following form:
	\begin{displaymath}
	\tau=\sum_{i=0}^{n} t_i I_{\{\tau=t_i\}}.
	\end{displaymath}
	By a similar analysis as the proof of Theorem \ref{os1}, we have $f(X_\tau^x)\in L_G^{*1}(\Omega_{t_n})$ and
	\begin{align*}
	&\hat{\mathbb{E}}[f(X_\tau^x)]=\hat{\mathbb{E}}[\sum_{i=0}^n f(X_{t_i}^x)I_{\{\tau=t_i\}}]\\
	=&\hat{\mathbb{E}}[\sum_{i=0}^{n-1} f(X_{t_i}^x)I_{\{\tau=t_i\}}+\hat{\mathbb{E}}_{t_{n-1}}[f(X_{t_n}^x)I_{\{\tau=t_n\}}]]\\
	=&\hat{\mathbb{E}}[\sum_{i=0}^{n-1} f(X_{t_i}^x)I_{\{\tau=t_i\}}+\hat{\mathbb{E}}_{t_{n-1}}[f(X_{t_n}^x)]I_{\{\tau=t_n\}}]\\
	=&\hat{\mathbb{E}}[\sum_{i=0}^{n-1} f(X_{t_i}^x)I_{\{\tau=t_i\}}+\hat{\mathbb{E}}[f(X_{t_n-t_{n-1}}^y)]_{y=X_{t_{n-1}}^x}I_{\{\tau=t_n\}}]\\
	\leq &\hat{\mathbb{E}}[\sum_{i=0}^{n-2} f(X_{t_i}^x)I_{\{\tau=t_i\}}+f(X_{t_{n-1}}^x)I_{\{\tau\geq t_{n-1}\}}]\leq \cdots\leq f(x),
	\end{align*}
	where we use the Markov property in the forth equality.
	
	Step 2. If $\tau$ is bounded and continuous, there exists a sequence of discrete $G$-stopping time $\{\tau^n\}_{n=1}^\infty$ such that $|\tau-\tau^n|\leq 1/2^n$. Applying the continuity of $f(X^x)$, similar with the proof of Proposition \ref{os2}, we have
	\begin{displaymath}
	\hat{\mathbb{E}}[f(X_\tau^x)]=\lim_{n\rightarrow \infty}\hat{\mathbb{E}}[f(X_{\tau^n}^x)]\leq f(x).
	\end{displaymath}
	
	Step 3. If $\tau$ satisfy equation \eqref{re14}, by a similar analysis as the proof of Proposition \ref{os6}, it follows that
	\begin{displaymath}
	\hat{\mathbb{E}}[f(X_\tau^x)]=\lim_{N\rightarrow\infty}\hat{\mathbb{E}}[f(X_{\tau\wedge N}^x)]\leq f(x).
	\end{displaymath}
	
	Now we show the result still hold for $f$ which is bounded and lower semicontinuous. We can choose a sequence $\{f_n\}_{n=1}^\infty\subset C_{b,Lip}(\mathbb{R})$ such that $f_n\uparrow f$. By the proof of Theorem \ref{os7}, we derive that $f(X_\tau^x)\in L_G^{1^*_*}(\Omega_{t_n})$ for each discrete $G$-stopping time $\tau$ with the form in Step 1. Besides, since $f_n(X_t^x)\uparrow f(X_t^x)$ for any $t\geq 0$ and $x\in \mathbb{R}$, we have
	\begin{equation}\label{re25}
	\hat{\mathbb{E}}_t[f(X_{t+s}^x)]=\lim_{n\rightarrow \infty}\hat{\mathbb{E}}_t[f_n(X_{t+s}^x)]=\lim_{n\rightarrow \infty}\hat{\mathbb{E}}[f_n(X_s^y)]_{y=X_t^x}=\hat{\mathbb{E}}[f(X_s^y)]_{y=X_t^x}.
	\end{equation}
	Then the proof of Step 1 can be extended to the case where $f$ is lower semicontinuous and bounded from below.
	If $\tau$ is continuous and bounded and $\tau^n$ is choosen as Step 2, noting that $f$ is lower semicontinuous and applying Fatou's Lemma, it is easy to check that
	\begin{displaymath}
	\hat{\mathbb{E}}[f(X_\tau^x)]\leq \hat{\mathbb{E}}[\liminf_{n\rightarrow\infty}f(X_{\tau^n}^x)]\leq \liminf_{n\rightarrow\infty}\hat{\mathbb{E}}[f(X_{\tau^n}^x)]\leq f(x).
	\end{displaymath}
	Repeating the proof of Step 3, we finally get the desired result.
\end{proof}

\begin{proposition}\label{os16}
	Suppose that $f$ is bounded and lower semicontinuous. Then $f$ is excessive if and only if $\{f(X_t^x)\}$ is a supermartingale for any $x\in\mathbb{R}$.
\end{proposition}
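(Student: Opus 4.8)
The plan is to mirror the discrete-time argument of Lemma \ref{os12}, the only structural change being that the one-step operator $T$ is replaced by the family of time-$t$ transition operators, and that the bridge between excessivity (a statement about deterministic starting points) and the supermartingale inequality (a statement conditioned on the information up to $s$) is supplied by the Markov property.

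One direction is immediate. If $\{f(X_t^x)\}$ is a $G$-supermartingale for every $x$, then the defining inequality $\hat{\mathbb{E}}_s[f(X_t^x)]\leq f(X_s^x)$ specialized to $s=0$, together with $X_0^x=x$, gives $\hat{\mathbb{E}}[f(X_t^x)]\leq f(x)$ for all $t\geq 0$, which is exactly the excessivity of $f$.

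For the converse, that excessivity implies the $G$-supermartingale property, I would invoke the extended Markov identity \eqref{re25} established in the proof of Proposition \ref{os15}, valid for bounded lower semicontinuous $f$: for $0\leq s\leq t$,
\[
\hat{\mathbb{E}}_s[f(X_t^x)]=\hat{\mathbb{E}}[f(X_{t-s}^y)]_{y=X_s^x},\quad q.s.
\]
Writing $g(y):=\hat{\mathbb{E}}[f(X_{t-s}^y)]$, excessivity says precisely that $g(y)\leq f(y)$ for every $y\in\mathbb{R}$. Since this is a pointwise inequality between Borel functions of the spatial variable, it survives the substitution $y=X_s^x$, giving $g(X_s^x)\leq f(X_s^x)$ everywhere on $\Omega$; combined with the displayed identity this yields $\hat{\mathbb{E}}_s[f(X_t^x)]\leq f(X_s^x)$ quasi-surely, which is the $G$-supermartingale property.

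The hard part is not the algebra but ensuring each step is legitimate. One needs $f(X_t^x)\in\mathbb{L}^1(\Omega)$ so that the conditional $G$-expectations are well defined; this follows from the boundedness of $f$ and is consistent with the standing implicit integrability assumption noted after the definition of excessive. One must also justify \eqref{re25} for the merely lower semicontinuous $f$, which I would do exactly as in Proposition \ref{os15} by choosing $f_n\uparrow f$ with $f_n\in C_{b,Lip}(\mathbb{R})$, applying the Markov property of Lemma \ref{os9} to each $f_n$, and passing to the limit via the monotone convergence theorem. The final substitution of $X_s^x$ into $g\leq f$ raises no quasi-sure subtlety beyond the measurability of $g$ (which is lower semicontinuous as an increasing limit of the continuous functions $y\mapsto\hat{\mathbb{E}}[f_n(X_{t-s}^y)]$), because the inequality holds at every point of $\mathbb{R}$.
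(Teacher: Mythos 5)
Your proposal is correct and follows essentially the same route as the paper: the paper also obtains excessivity from the supermartingale property by evaluating at $s=0$, and disposes of the converse by invoking the extended Markov identity \eqref{re25} together with the pointwise inequality $\hat{\mathbb{E}}[f(X_{t-s}^y)]\leq f(y)$. Your write-up merely fills in the details (monotone approximation $f_n\uparrow f$, integrability, measurability of the substituted function) that the paper leaves implicit.
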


\begin{proof}
	If $\{f(X_t^x)\}$ is a supermartingale, it follows that
	\begin{displaymath}
	\hat{\mathbb{E}}[f(X_t^x)]\leq f(X_0^x)=f(x),
	\end{displaymath}
	which implies that $f$ is excessive. The other direction can be shown easily by using equation \eqref{re25}. The proof is complete.
\end{proof}

For any given bounded and Lipschitz continuous function $g$, by the following iterative procedure, we may construct the smallest superharmonic function dominating $g$.
\begin{proposition}\label{os26}
	Let $g$ be  bounded and Lipschitz continuous and define the following sequence: $g_0(x)=g(x)$,
	\begin{displaymath}
	g_n(x)=\sup_{t\in S_n}\hat{\mathbb{E}}[g_{n-1}(X_t^x)], \ \ n=1,2,\cdots,
	\end{displaymath}
	where $S_n=\{k\cdot 2^{-n}:0\leq k\leq 4^n\}$. Then $g_n\uparrow \bar{g}$ and $\bar{g}$ is the smallest superharmonic function dominating $g$.
\end{proposition}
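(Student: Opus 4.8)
The plan is to establish, in order, that the increasing limit $\bar g$ exists and is bounded and lower semicontinuous, that it dominates $g$, that it is superharmonic, and finally that it is minimal among superharmonic majorants of $g$. The only genuinely delicate point will be upgrading the excessivity inequality from the dyadic times that appear in the recursion to all real $t\ge 0$.

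First I would record the structural facts. Writing $|g|\le M$, an induction using monotonicity of $\hat{\mathbb{E}}$ gives $|g_n|\le M$ for all $n$. Since $0\in S_n$ and $X_0^x=x$, taking $t=0$ in the defining supremum yields $g_n(x)\ge \hat{\mathbb{E}}[g_{n-1}(X_0^x)]=g_{n-1}(x)$, so $\{g_n\}$ is nondecreasing; hence the pointwise limit $\bar g:=\lim_n g_n$ exists with $|\bar g|\le M$ and $\bar g\ge g_0=g$. Moreover each $g_n$ lies in $C_{b,Lip}(\mathbb{R})$: by Proposition \ref{the1.17} the map $x\mapsto \hat{\mathbb{E}}[g_{n-1}(X_t^x)]$ is Lipschitz (its constant may grow with $n$, since the constant in Proposition \ref{the1.17} depends on the horizon, but this is harmless), and $g_n$ is a maximum of finitely many such maps because $S_n$ is a finite set. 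Consequently $\bar g$, being a supremum of continuous functions, is lower semicontinuous; together with boundedness, this is precisely the hypothesis required to invoke Proposition \ref{os15}.

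Next I would prove that $\bar g$ is excessive. For a fixed nonnegative dyadic $s$ one has $s\in S_{m+1}$ for all large $m$, and then the definition of $g_{m+1}$ gives $g_{m+1}(x)\ge \hat{\mathbb{E}}[g_m(X_s^x)]$. Letting $m\to\infty$ and using the monotone convergence theorem on the right-hand side (legitimate since $g_m(X_s^x)\uparrow \bar g(X_s^x)$ with all terms bounded) yields $\bar g(x)\ge \hat{\mathbb{E}}[\bar g(X_s^x)]$ for every dyadic $s$. To pass to an arbitrary $t\ge 0$ I would pick dyadics $s_j\to t$; since the paths of $X^x$ are continuous quasi-surely and $\bar g$ is lower semicontinuous, $\bar g(X_t^x)\le \liminf_j \bar g(X_{s_j}^x)$ q.s., and Fatou's Lemma (Lemma \ref{fatou}, applicable because $\bar g\ge -M$) gives
$$\hat{\mathbb{E}}[\bar g(X_t^x)]\le \hat{\mathbb{E}}[\liminf_j \bar g(X_{s_j}^x)]\le \liminf_j \hat{\mathbb{E}}[\bar g(X_{s_j}^x)]\le \bar g(x).$$
Thus $\bar g$ is excessive, and being bounded and lower semicontinuous it is superharmonic by Proposition \ref{os15}. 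This extension to nondyadic times is the step I expect to be the main obstacle: it is the only place where the interplay of lower semicontinuity, path continuity and the one-sided Fatou inequality (all that is available under $G$-expectation) is essential, and it explains why one must settle for lower semicontinuity of $\bar g$ rather than Lipschitz regularity.

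Finally, for minimality, let $h$ be any superharmonic function dominating $g$. Applying superharmonicity to the deterministic stopping times $t\in S_n$ (constant times trivially satisfy \eqref{re14}) gives $\hat{\mathbb{E}}[h(X_t^x)]\le h(x)$. I would then argue by induction: if $g_{n-1}\le h$, then for each $t\in S_n$ monotonicity of $\hat{\mathbb{E}}$ yields $\hat{\mathbb{E}}[g_{n-1}(X_t^x)]\le \hat{\mathbb{E}}[h(X_t^x)]\le h(x)$, and taking the supremum over $t\in S_n$ gives $g_n\le h$. Since $g_0=g\le h$, this shows $g_n\le h$ for every $n$, whence $\bar g\le h$. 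Therefore $\bar g$ is the smallest superharmonic function dominating $g$.
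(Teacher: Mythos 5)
Your proposal is correct and follows essentially the same route as the paper: establish that $g_n$ is bounded, increasing and Lipschitz (hence $\bar g$ is bounded and lower semicontinuous), prove excessivity first on the dyadic set $\cup_n S_n$ via monotone convergence and then on all of $[0,\infty)$ via path continuity, lower semicontinuity and the one-sided Fatou lemma, invoke Proposition \ref{os15} to upgrade excessivity to superharmonicity, and obtain minimality by induction on $g_n\leq h$. You supply a few details the paper treats as obvious (the use of $0\in S_n$ for monotonicity, the growth of the Lipschitz constant with $n$), but the argument is the same.
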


\begin{proof}
	It is obvious that $\{g_n\}$ is bounded and increasing. Furthermore, by Proposition \ref{the1.17}, we have
	\begin{displaymath}
	|g_1(x)-g_1(y)|\leq \sup_{t\in S_1}\hat{\mathbb{E}}[|g(X_t^x)-g(X_t^y)|]\leq C\sup_{t\in S_1}\hat{\mathbb{E}}[|X_t^x-X_t^y|]\leq C|x-y|.
	\end{displaymath}
	By induction, we derive that $g_n$ is continuous. Define $\bar{g}(x)=\lim_{n\rightarrow \infty}g_n(x)$. Then $\bar{g}$ is bounded and lower semicontinuous. We claim that $\bar{g}$ is excessive. Indeed, we can show that
	\begin{displaymath}
	\bar{g}(x)\geq g_n(x)=\hat{\mathbb{E}}[g_{n-1}(X_t^x)], \textrm{ for any } t\in S_n, n\geq 1.
	\end{displaymath}
	Letting $n\rightarrow \infty$, it follows that
	\begin{displaymath}
	\bar{g}\geq \hat{\mathbb{E}}[\bar{g}(X_t^x)], \textrm{ for any } t\in S=\cup_{n=1}^\infty S_n.
	\end{displaymath}
	If $t\notin S$, there exists $\{t_n\}_{n=1}^\infty\subset S$ such that $t_n\rightarrow t$. By Fatou's Lemma and noting that $\bar{g}$ is lower semicontinuous, we have
	\begin{displaymath}
	\hat{\mathbb{E}}[\bar{g}(X_t^x)]\leq \hat{\mathbb{E}}[\liminf_{n\rightarrow\infty}\bar{g}(X_{t_n}^x)]\leq \liminf_{n\rightarrow\infty}\hat{\mathbb{E}}[\bar{g}(X_{t_n}^x)]\leq \bar{g}(x).
	\end{displaymath}
	The above two inequalities imply $\bar{g}$ is excessive. By Proposition \ref{os15}, $\bar{g}$ is superharmonic. If $f$ is a superharmonic function and $f\geq g$, by induction, it is easy to check that $f\geq g_n$ for any $n=1,2,\cdots$. Letting $n$ tend to infinity, we finally get the desired result.
\end{proof}

\begin{theorem}\label{os27}
	Let $g$ be a bounded and Lipschitz function. Define
	\begin{displaymath}
	V(x)=\sup_{\tau\in \mathcal{T}}\hat{\mathbb{E}}[g(X_\tau^x)],
	\end{displaymath}
	where $\mathcal{T}$ is the collection of all $G$-stopping times satisfying equation \eqref{re14}. Then $V$ is the smallest superharmonic function dominating $g$.
\end{theorem}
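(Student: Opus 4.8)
The plan is to identify $V$ with the function $\bar g$ produced by the iterative scheme of Proposition \ref{os26}, which is already known to be the smallest superharmonic function dominating $g$; once $V=\bar g$ is established the theorem follows immediately. Three of the required facts are routine. First, the constant time $\tau\equiv 0$ belongs to $\mathcal T$ (it is trivially finite and satisfies \eqref{re14}), so $V(x)\ge\hat{\mathbb E}[g(X^x_0)]=g(x)$; thus $V$ dominates $g$. Second, $V$ is bounded by $\sup_{x}|g(x)|$ and, using the estimate $\hat{\mathbb E}[\sup_{s}|X^x_s-X^{x'}_s|^p]\le C|x-x'|^p$ from Proposition \ref{the1.17} together with the Lipschitz continuity of $g$, each map $x\mapsto\hat{\mathbb E}[g(X^x_\tau)]$ is Lipschitz with a constant independent of $\tau$; hence $V$, being a supremum of such maps, is itself Lipschitz and in particular lower semicontinuous. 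Third, since $\bar g$ is superharmonic and $\bar g\ge g$, for every $\tau\in\mathcal T$ we have $\hat{\mathbb E}[g(X^x_\tau)]\le\hat{\mathbb E}[\bar g(X^x_\tau)]\le\bar g(x)$, and taking the supremum over $\tau$ gives $V\le\bar g$.

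The heart of the proof is to show that $V$ is excessive, i.e. $\hat{\mathbb E}[V(X^x_t)]\le V(x)$ for every $t\ge 0$ and $x\in\mathbb R$. I would argue by a dynamic programming / gluing argument. Fix $t$ and $\varepsilon>0$; for each starting point $y$ choose a near-optimal $\tau^y\in\mathcal T$ with $\hat{\mathbb E}[g(X^y_{\tau^y})]\ge V(y)-\varepsilon$, and glue these into a single $G$-stopping time $\sigma$ for $X^x$ that first waits until time $t$ and then, from the random position $X^x_t$, runs the shifted copy of $\tau^{X^x_t}$. Using the Markov property (Lemma \ref{os9}) one expects $\hat{\mathbb E}_t[g(X^x_\sigma)]\ge V(X^x_t)-\varepsilon$; applying the tower property of the conditional $G$-expectation and the admissibility $\sigma\in\mathcal T$ then yields $V(x)\ge\hat{\mathbb E}[g(X^x_\sigma)]=\hat{\mathbb E}\big[\hat{\mathbb E}_t[g(X^x_\sigma)]\big]\ge\hat{\mathbb E}[V(X^x_t)]-\varepsilon$, and letting $\varepsilon\to0$ gives excessiveness. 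Alternatively, one can prove $g_n\le V$ for every $n$ by induction along the recursion of Proposition \ref{os26}, realizing each $g_n(x)$ by an admissible stopping time through the Markov property exactly as in the proof of Theorem \ref{os11}; this again reduces to the same excessiveness statement.

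Once excessiveness is in hand, $V$ is bounded, lower semicontinuous and excessive, so Proposition \ref{os15} upgrades it to a superharmonic function. Since $V$ dominates $g$ and $\bar g$ is the smallest superharmonic function with this property, we obtain $\bar g\le V$; combined with $V\le\bar g$ from the first paragraph this gives $V=\bar g$, which is precisely the smallest superharmonic function dominating $g$. The main obstacle is the excessiveness step: the delicate points are the measurable (in the initial condition $y$) selection of the near-optimal stopping times $\tau^y$ and the rigorous justification of the Markov property for the glued stopping time $\sigma$ with a random initial condition, both of which must be carried out in the quasi-sure setting where essential suprema need not exist in general (Section 2) and where one must also verify that $\sigma$ still satisfies \eqref{re14}.
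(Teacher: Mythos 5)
Your overall architecture (show $V$ is bounded, lower semicontinuous, excessive, then invoke Proposition \ref{os15} to upgrade to superharmonic, then compare with an arbitrary superharmonic majorant of $g$) matches the paper, and the easy steps ($V\geq g$, $V\leq \bar g$, the final minimality comparison) are fine. But the step you yourself flag as the heart of the proof --- excessiveness of $V$ --- is a genuine gap, and the route you propose for it would not go through in this framework. The gluing construction requires (a) a measurable selection $y\mapsto\tau^y$ of $\varepsilon$-optimal $G$-stopping times, (b) a proof that the pasted object $\sigma$ is itself a $G$-stopping time, i.e.\ that $I_{\{\sigma\leq s\}}\in L_G^{1^*}(\Omega_s)$ for all $s$ and that $\sigma$ satisfies \eqref{re14}, and (c) a conditional Markov/tower identity $\hat{\mathbb{E}}_t[g(X^x_\sigma)]=\hat{\mathbb{E}}[g(X^y_{\tau^y})]_{y=X^x_t}$ for a stopping rule depending on the random position $X^x_t$. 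None of these is available here: the paper's Lemma \ref{os9} gives the Markov property only for deterministic times and $C_{b,Lip}$ test functions, conditional $G$-expectations are only defined on rather rigid spaces, and Section 2 explicitly warns that essential suprema (hence measurable selections of near-optimizers) can fail to exist quasi-surely. Pasting stopping times across a non-dominated family of measures is precisely the hard technical obstacle in this literature, so it cannot be treated as a routine step; your ``alternative'' via the recursion $g_n$ reduces, as you note, to the same unproved statement.

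The paper circumvents all of this by time-discretization: it sets $V^n(x)=\sup_{\tau\in\mathcal{T}_n^\infty}\hat{\mathbb{E}}[g(X_\tau^x)]$ over dyadic-valued stopping times, shows $V^n\uparrow V$ (approximating any $\tau\in\mathcal{T}$ from above by $\tau^n\in\mathcal{T}_n^\infty$ and using Fatou), and then gets the supermartingale property of $\{V^n(X^x_t),\,t\in\mathcal{I}_n^\infty\}$ for free from the already-established discrete-time Markovian theory (Lemma \ref{os12} and Theorem \ref{os13}), where superharmonicity comes from the Wald--Bellman backward recursion $F(x)=\max\{f(x),TF(x)\}$ rather than from any pasting of stopping times. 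Excessiveness of $V$ then follows by monotone convergence on the dyadic grid and by Fatou plus lower semicontinuity off the grid. A secondary point: your claim that $V$ is Lipschitz ``with a constant independent of $\tau$'' is not justified, since the constant in Proposition \ref{the1.17} depends on the time horizon and the stopping times in $\mathcal{T}$ are unbounded; the paper only obtains (and only needs) lower semicontinuity of $V$, as an increasing limit of the functions $V^n$.
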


\begin{proof}
	Set $\mathcal{I}_n^\infty=\{0,\frac{1}{2^n},\cdots,\frac{k}{2^n},\cdots\}$ for any $n\geq 1$ and denote by $\mathcal{T}_n^\infty$ the set of all $G$-stopping times taking values in $\mathcal{I}^\infty_n$ and satisfying equation \eqref{re14}. Consider the following optimal stopping problem:
	\begin{displaymath}
	V^n(x)=\sup_{\tau\in \mathcal{T}_n^\infty}\hat{\mathbb{E}}[g(X_\tau^x)].
	\end{displaymath}
	It is easy to see that $V\geq V^n$, for any $n\geq 1$. On the other hand, for any $\tau\in \mathcal{T}$, there exists a sequence of $G$-stopping time $\{\tau^n\}$ such that $\tau^n\in \mathcal{T}_n^\infty$ and $\tau^n\rightarrow \tau$. Applying Fatou's Lemma, we have
	\begin{displaymath}
	\hat{\mathbb{E}}[g(X_\tau^x)]=\hat{\mathbb{E}}[\liminf_{n\rightarrow\infty} g(X_{\tau^n}^x)]\leq \liminf_{n\rightarrow\infty}\hat{\mathbb{E}}[g(X_{\tau^n}^x)]\leq \liminf_{n\rightarrow\infty}V^n(x),
	\end{displaymath}
	which implies that $V^n\uparrow V$ and $V$ is bounded and lower semicontinuous. By Lemma \ref{os12} and Theorem \ref{os13}, we know that $\{V^n(X_t^x),t\in \mathcal{I}_n^\infty\}$ is a $G$-supermartingale for each $x\in\mathbb{R}$. If $t\in \mathcal{I}^\infty=\cup_{n=1}^\infty\mathcal{I}_n^\infty$, without loss of generality, we assume that $t=\frac{k}{2^m}$. By simple calculation, we have
	\begin{displaymath}
	\hat{\mathbb{E}}[V(X_t^x)]=\lim_{n\rightarrow \infty,n\geq m}\hat{\mathbb{E}}[V^n(X_t^x)]\leq \lim_{n\rightarrow \infty,n\geq m}V^n(x)=V(x).
	\end{displaymath}
	If $t\notin \mathcal{I}^\infty$, there exists $\{t_n\}\subset \mathcal{I}^\infty$ such that $t_n\rightarrow t$. Noting that $V$ is lower semicontinuous, it follows that
	\begin{displaymath}
	\hat{\mathbb{E}}[V(X_t^x)]\leq \hat{\mathbb{E}}[\liminf_{n\rightarrow\infty} V(X_{t_n}^x)]\leq \liminf_{n\rightarrow\infty}\hat{\mathbb{E}}[V(X_{t_n}^x)]\leq V(x).
	\end{displaymath}
	We derive that $V$ is an excessive function. Applying Proposition \ref{os15}, $V$ is also superharmonic. For any superharmonic function $f\geq g$, it is easy to check that
	\begin{displaymath}
	V(x)=\sup_{\tau\in \mathcal{T}}\hat{\mathbb{E}}[g(X_\tau^x)]\leq \sup_{\tau\in \mathcal{T}}\hat{\mathbb{E}}[f(X_\tau^x)]\leq f(x),
	\end{displaymath}
	which yields that $V$ is the smallest superharmonic function dominating $g$.
\end{proof}

\section*{Conclusion}
In this paper, we use the $G$-stochastic analysis to study optimal stopping problem under Knightian uncertainty for both the discrete time case and the continuous time case, and the time horizon can be finite or infinite. In order to solve this problem, we first introduce a new kind of random times, called $G$-stopping times, such that the conditional $G$-expectation is well defined for each payoff process $X$ stopped at some $G$-stopping time $\tau$. Besides, since the multiple priors $\mathcal{P}$ represented the $G$-expectation is non-dominated, we need to define the essential supremum in the $\mathcal{P}$-quasi-surely sense.

For the discrete time case, when the time horizon is finite, we apply the method of backward induction to define value function $V$ and then to show that $V$ is the Snell envelope of $X$ and the first hitting time is an optimal stopping time. By taking the time horizon $N$ goes to infinity, we get similar results with the finite time case. By the refinement of the time interval, we can study the continuous time case and then establish the relation between the value function and the solution of reflected $G$-BSDE. Therefore, it helps to get numerical approximation for the value function as well as the solution of reflected $G$-BSDE. In a Markovian setting, for the discrete time case, we show that the value function is the solution to the Wald-Bellman equation. For the continuous time case, similar with the classical result, the value function coincides with the smallest superharmonic function dominating the payoff function.\\

\noindent \textbf{Acknowledgements} \  \ The auther would like to thank Frank Riedel for helpful comments and suggestions. Financial support from the German Research Foundation (DFG) via CRC 1283 is gratefully acknowledged.

\appendix
\renewcommand\thesection{Appendix A}
\section{ }
\renewcommand\thesection{A}

We now introduce some basic notations and results of $G$-expectation. As is shown in \cite{HP13}, we can extend the conditional $G$-expectation to space $\bar{L}_G^{1^*_*}(\Omega)$ and it satisfies the following property.
\begin{proposition}(\cite{HP13})\label{os19}
	For each $X\in \bar{L}_G^{1^*_*}(\Omega)$, we have, for each $P\in\mathcal{P}$,
	\[\hat{\mathbb{E}}_t[X]={\esssup_{Q\in\mathcal{P}(t,P)}}^P E^Q[X|\mathcal{F}_t],\ \   P\textrm{-a.s.},\]
	where $\mathcal{P}(t,P)=\{Q\in P: E_Q[X]=E_P[X], \forall X\in L_{ip}(\Omega_t)\}$.
\end{proposition}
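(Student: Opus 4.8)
The plan is to prove the identity first on the generating space $L_G^1(\Omega)$, where it is the known representation of the conditional $G$-expectation, and then to propagate it along the chain $L_G^1(\Omega)\subset L_G^{1^*}(\Omega)\subset L_G^{1^*_*}(\Omega)\subset\bar{L}_G^{1^*_*}(\Omega)$ by three successive approximation steps, matching the continuity of $\hat{\mathbb{E}}_t$ already recorded in \cite{HP13} on the left-hand side with the corresponding convergence of the conditional essential supremum on the right. Throughout I fix $P\in\mathcal{P}$ and argue $P$-a.s.; since every $Q\in\mathcal{P}(t,P)$ agrees with $P$ on $\mathcal{F}_t$, any $P$-a.s. statement about the $\mathcal{F}_t$-measurable objects $E^Q[\,\cdot\,|\mathcal{F}_t]$ is at the same time a $Q$-a.s. statement, which is what lets the two sides communicate.

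Two elementary facts drive the extensions. First, for a fixed $Q\in\mathcal{P}(t,P)$, if $X_n\to X$ with an integrable monotone or dominating bound then $E^Q[X_n|\mathcal{F}_t]\to E^Q[X|\mathcal{F}_t]$ by classical conditional convergence. Second, the conditional essential supremum is an $L^1$-contraction,
\[
\Big|{\esssup_{Q}}^P E^Q[X|\mathcal{F}_t]-{\esssup_{Q}}^P E^Q[X'|\mathcal{F}_t]\Big|\le{\esssup_{Q}}^P E^Q[\,|X-X'|\,|\,\mathcal{F}_t]=\hat{\mathbb{E}}_t[\,|X-X'|\,],
\]
so that applying $\hat{\mathbb{E}}$ and the tower property dominates the difference by $\hat{\mathbb{E}}[\,|X-X'|\,]$. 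Using the second fact, the passage $L_G^{1^*_*}\to\bar{L}_G^{1^*_*}$, which is an $L^1$-limit, is immediate once the identity holds on $L_G^{1^*_*}$. For the increasing passage $L_G^{1^*}\to L_G^{1^*_*}$, take $X_n\uparrow X$ with $X_n\in L_G^{1^*}$: the left-hand side rises to $\hat{\mathbb{E}}_t[X]$ by monotone convergence for the $G$-expectation, and on the right the interchange is painless, since ${\esssup_Q}^P E^Q[X_n|\mathcal{F}_t]\le{\esssup_Q}^P E^Q[X|\mathcal{F}_t]$ while conversely $\lim_n{\esssup_Q}^P E^Q[X_n|\mathcal{F}_t]\ge\lim_n E^{Q_0}[X_n|\mathcal{F}_t]=E^{Q_0}[X|\mathcal{F}_t]$ for each fixed $Q_0$, whence $\ge{\esssup_{Q_0}}^P E^{Q_0}[X|\mathcal{F}_t]$.

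The genuinely delicate step is the decreasing passage $L_G^1\to L_G^{1^*}$, with $X_n\downarrow X$ and $X_n\in L_G^1$. On the left $\hat{\mathbb{E}}_t[X_n]\downarrow\hat{\mathbb{E}}_t[X]$ is precisely the downward continuity by which $\hat{\mathbb{E}}_t$ is extended to $L_G^{1^*}$, so that side is well behaved; and the inequality ${\esssup_Q}^P E^Q[X|\mathcal{F}_t]\le\hat{\mathbb{E}}_t[X]$ is automatic, because $E^Q[X|\mathcal{F}_t]\le E^Q[X_n|\mathcal{F}_t]\le\hat{\mathbb{E}}_t[X_n]$ and one lets $n\to\infty$. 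The reverse inequality, however, amounts to interchanging the essential supremum with a \emph{decreasing} limit, which fails for general families, and this is where I expect the main obstacle to lie. The remedy is to use the stability of $\mathcal{P}(t,P)$ under pasting (a standard structural property of the $G$-framework), which makes $\{E^Q[\,\cdot\,|\mathcal{F}_t]:Q\in\mathcal{P}(t,P)\}$ upward directed; consequently, for each $n$ the supremum ${\esssup_Q}^P E^Q[X_n|\mathcal{F}_t]$ is the increasing limit of $E^{Q_k^n}[X_n|\mathcal{F}_t]$ along a countable subfamily. Combining this with the base-case identity and the downward continuity on the left, I would establish $\hat{\mathbb{E}}_t[X]\le{\esssup_Q}^P E^Q[X|\mathcal{F}_t]$ by a diagonal extraction over the measures $Q_k^n$, using the weak compactness of $\mathcal{P}$ to produce a limiting $Q^*\in\mathcal{P}(t,P)$ along which $E^{Q^*}[X|\mathcal{F}_t]$ recovers $\lim_n\hat{\mathbb{E}}_t[X_n]=\hat{\mathbb{E}}_t[X]$; the attainment statement of Proposition \ref{os22} is the natural tool for producing such an optimal measure. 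Once the identity is secured on $L_G^{1^*}$, the increasing and $L^1$ passages above close the argument on all of $\bar{L}_G^{1^*_*}(\Omega)$.
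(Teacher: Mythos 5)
You should first note that the paper itself contains no proof of Proposition \ref{os19}: it is quoted in Appendix A from \cite{HP13}, so your attempt can only be measured against the argument there. Your overall architecture is the right one and matches it in spirit: establish the representation on $L_G^1(\Omega)$, then propagate it along $L_G^1\subset L_G^{1^*}\subset L_G^{1^*_*}\subset\bar{L}_G^{1^*_*}$, with the increasing passage handled by conditional monotone convergence and the $\mathbb{L}^1$-closure passage by a contraction estimate. Two of your three passages are essentially sound. One small circularity should be repaired in the $\mathbb{L}^1$ step: the asserted identity ${\esssup_{Q\in\mathcal{P}(t,P)}}^P E^Q[\,|X-X'|\,|\,\mathcal{F}_t]=\hat{\mathbb{E}}_t[\,|X-X'|\,]$ is an instance of the proposition being proved, applied to $|X-X'|$, which need not lie in a class where you have already established it (these spaces are not linear, as the paper's first remark stresses). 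This is avoidable: integrate against $P$ and use upward directedness of $\{E^Q[\cdot\,|\mathcal{F}_t]:Q\in\mathcal{P}(t,P)\}$ to get $E^P\bigl[{\esssup_Q}^P E^Q[\,|X-X'|\,|\,\mathcal{F}_t]\bigr]=\sup_Q E^Q[|X-X'|]\leq\hat{\mathbb{E}}[|X-X'|]$, which is all the step needs.

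The genuine gap is in the decreasing passage $L_G^1\to L_G^{1^*}$, exactly where you locate the difficulty, but the mechanism you propose does not work. A diagonal extraction of measures $Q^n_k$ with a weak limit $Q^*\in\mathcal{P}(t,P)$ cannot deliver $E^{Q^*}[X|\mathcal{F}_t]=\hat{\mathbb{E}}_t[X]$ $P$-a.s., for two reasons: weak convergence $Q_k\Rightarrow Q^*$ gives no control whatsoever on the random variables $E^{Q_k}[X|\mathcal{F}_t]$ (conditional expectation is not weakly continuous in the measure, neither a.s.\ nor in $L^1(P)$), and Proposition \ref{os22} only yields attainment of the scalar quantity $\hat{\mathbb{E}}[X]=\sup_P E^P[X]$, whereas you would need a single measure attaining the conditional essential supremum simultaneously $P$-a.s.\ — a far stronger statement that the known proof never requires. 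The correct route keeps compactness at the scalar level. Fix $P$, set $\zeta_n:={\esssup_Q}^P E^Q[X_n|\mathcal{F}_t]=\hat{\mathbb{E}}_t[X_n]\downarrow\zeta$; your easy direction gives ${\esssup_Q}^P E^Q[X|\mathcal{F}_t]\leq\zeta$. By directedness and $Q=P$ on $\mathcal{F}_t$, $E^P[\zeta_n]=\sup_{Q\in\mathcal{P}(t,P)}E^Q[X_n]$ and likewise $E^P\bigl[{\esssup_Q}^P E^Q[X|\mathcal{F}_t]\bigr]=\sup_Q E^Q[X]$. The only interchange needed is then the scalar one,
\begin{equation*}
\lim_{n\rightarrow\infty}\sup_{Q\in\mathcal{P}(t,P)}E^Q[X_n]=\sup_{Q\in\mathcal{P}(t,P)}E^Q[X],
\end{equation*}
which follows from the upper semicontinuity of $Q\mapsto E^Q[X_m]$ along weakly convergent sequences together with weak compactness of $\mathcal{P}(t,P)$ (note that membership in $\mathcal{P}(t,P)$ survives weak limits since it is tested against $L_{ip}(\Omega_t)$) — this is precisely the argument already carried out in the first part of the proof of Proposition \ref{os22}, via Lemma 1.29 of Chapter VI in \cite{P10}. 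Monotone convergence then gives $E^P[\zeta]=E^P\bigl[{\esssup_Q}^P E^Q[X|\mathcal{F}_t]\bigr]$, and equality of $P$-integrals of two ordered random variables forces $\zeta={\esssup_Q}^P E^Q[X|\mathcal{F}_t]$ $P$-a.s. In short: your instincts about pasting, directedness and weak compactness are the right ingredients, but they must be deployed through expectations, not through an a.s.\ attainment of the conditional supremum.
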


Since the conditional expectation can be well-defined on the space $\bar{L}_G^{1^*_*}(\Omega)$, we can modify the definition of $G$-martingale (-sub, -supmartingale) slightly.
\begin{definition}
	A process $M=\{M_t\}_{t\in[0,T]}$ is called a $G$-martingale (-sub, -supermartingale, resp.), if for each $t\in[0,T]$, $M_t\in \bar{L}_G^{1^*_*}(\Omega_t)$ and $\hat{\mathbb{E}}_s[M_t]=M_s$ $(\geq, \leq, \textrm{resp.})$, for any $0\leq s\leq t\leq T$.
\end{definition}

The extended conditional $G$-expectation shares many properties with the classical conditional expectation except the linearity. More precisely, we have the following proposition.
\begin{proposition}(\cite{HP13})\label{os20}
	We have
	\begin{description}
		\item[(1)] $X,Y\in \bar{L}_G^{1^*_*}(\Omega)$, $X\leq Y$$\Rightarrow$$\hat{\mathbb{E}}_t[X]\leq \hat{\mathbb{E}}[Y]$;
		\item[(2)] $X\in \bar{L}_G^{1^*_*}(\Omega_t)$, $Y\in \bar{L}_G^{1^*_*}(\Omega)$$\Rightarrow$$\hat{\mathbb{E}}_t[X+Y]=X+\hat{\mathbb{E}}_t[Y]$;
		\item[(3)] $X,Y\in \bar{L}_G^{1^*_*}(\Omega)$$\Rightarrow$$\hat{\mathbb{E}}_t[X+Y]\leq \hat{\mathbb{E}}_t[X]+\hat{\mathbb{E}}_t[Y]$;
		\item[(4)]$X\in \bar{L}_G^{1^*_*}(\Omega_t)$ is bounded, $X\geq 0$, $Y\in \bar{L}_G^{1^*_*}(\Omega)$, $Y\geq 0$, $\lim_{n\rightarrow\infty}\hat{\mathbb{E}}[YI_{\{Y\geq n\}}]=0$$\Rightarrow$$\hat{\mathbb{E}}_t[XY]=X\hat{\mathbb{E}}_t[Y]$;
		\item[(5)] $X\in \bar{L}_G^{1^*_*}(\Omega)$$\Rightarrow$$\hat{\mathbb{E}}_s[\hat{\mathbb{E}}_t[X]]=\hat{\mathbb{E}}_{s\wedge t}[X]$;
		\item[(6)] $\{X_n\}_{n=1}^\infty\subset L_G^{1^*}(\Omega)(L_G^{1^*_*}(\Omega))$, $X_n\downarrow X(X_n\uparrow X)$ q.s.$\Rightarrow$ $X\in L_G^{1^*}(\Omega)(L_G^{1^*_*}(\Omega))$ and $\hat{\mathbb{E}}_t[X]=\lim_{n\rightarrow\infty}\hat{\mathbb{E}}_t[X_n]$
	\end{description}
\end{proposition}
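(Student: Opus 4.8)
:

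The plan is to derive all six properties from the dual representation of the extended conditional $G$-expectation established in Proposition \ref{os19}, namely
\begin{displaymath}
\hat{\mathbb{E}}_t[X]={\esssup_{Q\in\mathcal{P}(t,P)}}^P E^Q[X\,|\,\mathcal{F}_t],\quad P\text{-a.s.},
\end{displaymath}
valid for every $X\in\bar{L}_G^{1^*_*}(\Omega)$ and every $P\in\mathcal{P}$. The essential supremum appearing here is the \emph{classical} $P$-a.s. one over the stable family $\mathcal{P}(t,P)$, so each property reduces to the corresponding well-known property of the linear conditional expectation $E^Q[\,\cdot\,|\mathcal{F}_t]$ under a fixed prior $Q$, and is then lifted through the essential supremum. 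Since a q.s. statement is equivalent to a $P$-a.s. statement for every $P\in\mathcal{P}$, it suffices throughout to argue $P$-a.s. for an arbitrary fixed $P$.

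Properties (1), (2) and (3) follow almost directly. For (1), if $X\le Y$ q.s. then $E^Q[X|\mathcal{F}_t]\le E^Q[Y|\mathcal{F}_t]$ $Q$-a.s. for each $Q\in\mathcal{P}(t,P)$, and taking the $P$-essential supremum preserves the inequality, giving $\hat{\mathbb{E}}_t[X]\le\hat{\mathbb{E}}_t[Y]$. For (3), the linearity of $E^Q$ gives $E^Q[X+Y|\mathcal{F}_t]=E^Q[X|\mathcal{F}_t]+E^Q[Y|\mathcal{F}_t]\le\hat{\mathbb{E}}_t[X]+\hat{\mathbb{E}}_t[Y]$, and taking the supremum over $Q$ on the left yields subadditivity. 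For (2), when $X\in\bar{L}_G^{1^*_*}(\Omega_t)$ is $\mathcal{F}_t$-measurable we have $E^Q[X+Y|\mathcal{F}_t]=X+E^Q[Y|\mathcal{F}_t]$ for every $Q$, and since $X$ does not depend on $Q$ it factors out of the essential supremum, giving $\hat{\mathbb{E}}_t[X+Y]=X+\hat{\mathbb{E}}_t[Y]$.

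The remaining three are more delicate. For the continuity property (6) I would first treat a decreasing sequence $\{X_n\}\subset L_G^1(\Omega)$ with $X_n\downarrow X$: classical monotone convergence gives $E^Q[X_n|\mathcal{F}_t]\downarrow E^Q[X|\mathcal{F}_t]$ for each $Q$, and the main point is to interchange this monotone limit with the $P$-essential supremum over $Q\in\mathcal{P}(t,P)$, which uses the monotonicity in $n$ together with the fact that $X\in L_G^{1^*}(\Omega)$ by the very construction of that space. The increasing case over $L_G^{1^*_*}(\Omega)$ is handled symmetrically, and membership of the limit in the correct space follows from the definitions of $L_G^{1^*}$ and $L_G^{1^*_*}$. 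Property (6) is the workhorse: it is what allows the definition of $\hat{\mathbb{E}}_t$ to be pushed successively from $L_G^1$ to $L_G^{1^*}$, then to $L_G^{1^*_*}$, and finally to $\bar{L}_G^{1^*_*}$ via $\hat{\mathbb{E}}[|\cdot|]$-approximation, and it also upgrades (1)--(3) from the base space to all of $\bar{L}_G^{1^*_*}(\Omega)$. For the tower property (5), the crucial structural input is the stability under pasting (time-consistency) of the prior family: for $s\le t$ the set $\mathcal{P}(s,P)$ is obtained by concatenating elements of $\mathcal{P}(s,\cdot)$ with elements of $\mathcal{P}(t,\cdot)$, which is exactly what makes the two essential suprema compatible with two-step conditioning; combined with the classical tower property of each $E^Q$ this yields $\hat{\mathbb{E}}_s[\hat{\mathbb{E}}_t[X]]=\hat{\mathbb{E}}_{s\wedge t}[X]$. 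Property (4) I would obtain by approximating the bounded nonnegative $\mathcal{F}_t$-measurable factor $X$ by simple $\mathcal{F}_t$-measurable functions, applying (2) to each indicator piece, and passing to the limit; here the hypothesis $\lim_n\hat{\mathbb{E}}[YI_{\{Y\ge n\}}]=0$ is precisely what controls the tails uniformly in $Q$ so that the limit exchanges with $\hat{\mathbb{E}}_t$.

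The main obstacle I anticipate is the interchange of the $P$-essential supremum with monotone limits in (6) and the stability-under-pasting argument underlying (5). Both rely on the specific structure of the $G$-framework, namely the weak compactness and rectangularity of $\mathcal{P}$, rather than on soft measure-theoretic facts. The essential supremum inside the representation is the classical $P$-a.s. one, which, unlike the quasi-sure essential supremum of Section 2, behaves well under these operations, and this is precisely what renders the level-by-level extension feasible.
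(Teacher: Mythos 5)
This proposition is not proved in the paper at all: it is imported verbatim from \cite{HP13}, where the extended conditional $G$-expectation is \emph{constructed} by successive monotone extension ($L_G^1\rightarrow L_G^{1^*}\rightarrow L_G^{1^*_*}\rightarrow \bar{L}_G^{1^*_*}$), properties (1)--(6) are verified level by level along that construction via capacity and weak-compactness arguments (in the spirit of the present paper's Lemma \ref{os7}), and the dual representation of Proposition \ref{os19} is obtained only \emph{afterwards} as a consequence. Your plan inverts this logical order: you take Proposition \ref{os19} as the starting point and derive (1)--(6) from it. For (1)--(3) this is harmless and works as you describe. But for (6) it is essentially circular: the monotone continuity (6) is precisely what makes the level-by-level extension---and hence the operator appearing in Proposition \ref{os19}---well defined, as you yourself concede when you call (6) ``the workhorse'' that pushes the definition up through the spaces. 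Within your representation-first route, the decreasing half of (6) becomes the minimax-type interchange $\lim_n {\esssup_{Q\in\mathcal{P}(t,P)}}^P E^Q[X_n|\mathcal{F}_t]\leq {\esssup_{Q\in\mathcal{P}(t,P)}}^P E^Q[X|\mathcal{F}_t]$ for $X_n\downarrow X$ (the reverse inequality is the trivial one), and this is exactly where your sketch stops: you name ``weak compactness and rectangularity'' as the needed inputs but give no argument, whereas producing a single near-maximizing $Q^*$ uniformly in $n$ (by directedness-under-pasting plus a compactness/diagonal or Dini-type argument) is the actual mathematical content of the result. There is also an aggregation gap: the representation in Proposition \ref{os19} defines $\hat{\mathbb{E}}_t[X]$ only $P$-a.s., separately for each $P\in\mathcal{P}$, and since the priors are mutually singular these $P$-by-$P$ versions do not automatically paste into one quasi-surely defined random variable. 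It is the constructive monotone extension that supplies the aggregated object; so your reduction ``q.s.\ statement $\Leftrightarrow$ $P$-a.s.\ for every $P$'' is legitimate for \emph{verifying} identities between already-defined objects, but cannot replace the construction that (6) underwrites.

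Two smaller points. In (4) you propose to handle the simple-function approximation by ``applying (2) to each indicator piece'', but (2) is additivity of an $\mathcal{F}_t$-measurable \emph{summand}; what the argument actually needs is the local multiplicative property $\hat{\mathbb{E}}_t[I_AY]=I_A\hat{\mathbb{E}}_t[Y]$ for $A\in\mathcal{B}(\Omega_t)$, i.e.\ the analogue of Lemma \ref{os7} and equation \eqref{re15}. Under the representation this does hold, because a nonnegative $\mathcal{F}_t$-measurable factor distributes over the upward-directed essential supremum, but it is a different lemma from (2) and should be invoked as such. Finally, you pass over the membership assertions (e.g.\ that $XY\in\bar{L}_G^{1^*_*}(\Omega)$ in (4), and that the iterated object in (5) lies in the right space), which in \cite{HP13} are a nontrivial part of the statement and are established through the monotone approximations that your route tries to bypass.
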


Now we give some examples of random variables belong to the above spaces.
\begin{proposition}(\cite{HP13})\label{os21}
	We have
	\begin{description}
		\item[(1)] Let $X$ be a bounded upper (resp. lower) semicontinuous function on $\Omega$. Then $X\in L_G^{1^*}(\Omega)$ (resp. $X\in L_G^{*1}(\Omega)$);
		\item[(2)] Let $X\in L_G^1(\Omega,\mathbb{R}^n)$ and let $f$ be a bounded upper (resp. lower) semicontinuous function on $\mathbb{R}^n$. Then $f(X)\in L_G^{1^*}(\Omega)$ (resp. $f(X)\in L_G^{*1}(\Omega)$);
	\end{description}
\end{proposition}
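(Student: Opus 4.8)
The plan is to establish the two upper-semicontinuous assertions directly and to deduce the lower-semicontinuous ones by negation. The engine in both cases is the Moreau--Yosida (inf-convolution) approximation, which attaches to any bounded upper-semicontinuous function a non-increasing sequence of bounded Lipschitz functions converging to it pointwise. Since membership in $L_G^{1^*}(\Omega)$ requires exactly a sequence in $L_G^1(\Omega)$ decreasing quasi-surely to the target, this is the natural device, and the whole argument reduces to checking that the Lipschitz approximants belong to $L_G^1(\Omega)$.

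For (1), suppose $X$ is bounded, $|X|\le M$, and upper semicontinuous, and let $\rho$ be the natural metric on $\Omega$. Define
\[
X_n(\omega):=\sup_{\omega'\in\Omega}\{X(\omega')-n\,\rho(\omega,\omega')\}.
\]
Each $X_n$ is $n$-Lipschitz with $|X_n|\le M$, the sequence is non-increasing in $n$, and choosing $\omega'=\omega$ gives $X_n\ge X$. Upper semicontinuity enters only in proving $\lim_n X_n(\omega)=X(\omega)$: for fixed $\omega$ and $\varepsilon>0$ there is $\delta>0$ with $X(\omega')<X(\omega)+\varepsilon$ whenever $\rho(\omega,\omega')<\delta$, while for $\rho(\omega,\omega')\ge\delta$ the bracketed term is at most $M-n\delta$, which drops below $X(\omega)+\varepsilon$ for large $n$; hence $X_n(\omega)\le X(\omega)+\varepsilon$ eventually. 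Thus $X_n\downarrow X$ everywhere, in particular quasi-surely. As each $X_n$ is bounded and continuous on $\Omega$, it lies in $L_G^1(\Omega)$ (bounded continuous path functionals are quasi-continuous and trivially uniformly integrable, hence belong to $L_G^1(\Omega)$ by the characterization recalled in \cite{P10,HP13}), and $X\in\mathbb{L}^1(\Omega)$ by boundedness, so $X\in L_G^{1^*}(\Omega)$. If instead $X$ is bounded and lower semicontinuous, then $-X$ is bounded upper semicontinuous, whence $-X\in L_G^{1^*}(\Omega)$; since $0\in L_G^{1^*}(\Omega)$ (take $1/n\downarrow 0$), the identity $X=0-(-X)$ places $X$ in $L_G^{*1}(\Omega)$.

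For (2), let $X\in L_G^1(\Omega,\mathbb{R}^n)$ and let $f$ be bounded upper semicontinuous on $\mathbb{R}^n$. Apply the same construction on $\mathbb{R}^n$:
\[
f_m(y):=\sup_{z\in\mathbb{R}^n}\{f(z)-m|y-z|\},
\]
so that $f_m$ is bounded, $m$-Lipschitz, and $f_m\downarrow f$ pointwise. Then $f_m(X)\downarrow f(X)$ quasi-surely and $f(X)\in\mathbb{L}^1(\Omega)$ by boundedness. To see $f_m(X)\in L_G^1(\Omega)$, approximate the coordinates of $X$ in $L_G^1$-norm by Lipschitz cylinder functions; composition with the Lipschitz map $f_m$ keeps these in $L_{ip}(\Omega)$, and the estimate $\hat{\mathbb{E}}[|f_m(\xi)-f_m(\xi')|]\le m\,\hat{\mathbb{E}}[|\xi-\xi'|]$ shows the images form an $L_G^1$-Cauchy sequence with limit $f_m(X)$. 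Hence $f(X)\in L_G^{1^*}(\Omega)$, and the lower-semicontinuous case again follows from $f(X)=0-(-f(X))$ with $-f(X)\in L_G^{1^*}(\Omega)$, so $f(X)\in L_G^{*1}(\Omega)$.

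The main obstacle is not the convergence but the membership of the approximants in $L_G^1(\Omega)$. In (1) the functions $X_n$ live on the infinite-dimensional path space and are not cylinder functions, so Lipschitz-cylinder density cannot be invoked directly; the clean route is the quasi-continuity characterization of $L_G^1(\Omega)$, under which every bounded continuous path functional is admissible. In (2) the subtlety is that $X$ is only an $L_G^1$-limit of cylinder functions, so one must verify that post-composition by $f_m$ is continuous for the $L_G^1$-norm; this is precisely the displayed Lipschitz estimate, and it is where the boundedness of $f$ (guaranteeing that $f_m$ is finite and bounded) is genuinely used.
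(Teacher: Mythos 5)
Your proof is correct, and since the paper states Proposition \ref{os21} without proof (it is quoted directly from \cite{HP13}), the relevant comparison is with the argument there: your sup-convolution construction of a decreasing sequence of bounded Lipschitz approximants, combined with the quasi-continuity characterization giving $C_b(\Omega)\subset L_G^1(\Omega)$ (a fact the paper itself uses implicitly in the proof of Lemma \ref{os7}), is essentially the standard route of the cited reference, with the lower semicontinuous cases obtained by negation exactly as the definition $L_G^{*1}(\Omega)=\{X-Y:X,Y\in L_G^{1^*}(\Omega)\}$ demands. The one genuinely delicate point, membership of the approximants in $L_G^1(\Omega)$, is precisely the one you isolate and settle correctly in both parts: on path space in (1) via the quasi-continuity characterization, and in (2) via the estimate $\hat{\mathbb{E}}[|f_m(\xi)-f_m(\xi')|]\leq m\,\hat{\mathbb{E}}[|\xi-\xi'|]$ showing stability of $L_G^1$-approximation under post-composition with the bounded Lipschitz $f_m$.
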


\begin{remark}\label{o8}
	Let $X\in L_G^1(\Omega)$, $a\in \mathbb{R}$. Then by the above proposition, we have $I_{\{X\leq a\}}$, $I_{\{X\geq a\}}$, $I_{\{X=a\}}\in L_G^{1^*}(\Omega)$.
\end{remark}

\renewcommand\thesection{Appendix B}
\section{ }
\renewcommand\thesection{B}

In this section, we recalled some basic results about reflected BSDE driven by $G$-Brownian motion.  More details can be found in \cite{L, LPS, P07a,P08a,P10}.
\begin{definition}
\label{def2.6} (i) Let $M_{G}^{0}(0,T)$ be the collection of processes in the
following form: for a given partition $\{t_{0},\cdot\cdot\cdot,t_{N}\}=\pi
_{T}$ of $[0,T]$,
\[
\eta_{t}(\omega)=\sum_{j=0}^{N-1}\xi_{j}(\omega)\mathbf{1}_{[t_{j},t_{j+1})}(t),
\]
where $\xi_{i}\in L_{ip}(\Omega_{t_{i}})$, $i=0,1,2,\cdot\cdot\cdot,N-1$. For each
$p\geq1$ and $\eta\in M_G^0(0,T)$, let $\|\eta\|_{H_G^p}:=\{\hat{\mathbb{E}}[(\int_0^T|\eta_s|^2ds)^{p/2}]\}^{1/p}$, $\Vert\eta\Vert_{M_{G}^{p}}:=(\mathbb{\hat{E}}[\int_{0}^{T}|\eta_{s}|^{p}ds])^{1/p}$ and denote by $H_G^p(0,T)$,  $M_{G}^{p}(0,T)$ the completion
of $M_{G}^{0}(0,T)$ under the norm $\|\cdot\|_{H_G^p}$, $\|\cdot\|_{M_G^p}$, respectively.

(ii) Let $S_G^0(0,T)=\{h(t,B_{t_1\wedge t}, \ldots,B_{t_n\wedge t}):t_1,\ldots,t_n\in[0,T],h\in C_{b,Lip}(\mathbb{R}^{n+1})\}$. For $p\geq 1$ and $\eta\in S_G^0(0,T)$, set $\|\eta\|_{S_G^p}=\{\hat{\mathbb{E}}[\sup_{t\in[0,T]}|\eta_t|^p]\}^{1/p}$. Denote by $S_G^p(0,T)$ the completion of $S_G^0(0,T)$ under the norm $\|\cdot\|_{S_G^p}$.
\end{definition}

 We have the following continuity property for any $Y\in S_G^p(0,T)$ with $p>1$.

\begin{lemma}[\cite{LPS}]\label{the3.7}
For $Y\in S_G^p(0,T)$ with $p>1$, we have, by setting $Y_s:=Y_T$ for $s>T$,
\begin{displaymath}
F(Y):=\limsup_{\varepsilon\rightarrow0}(\hat{\mathbb{E}}[\sup_{t\in[0,T]}\sup_{s\in[t,t+\varepsilon]}|Y_t-Y_s|^p])^\frac{1}{p}=0.
\end{displaymath}
\end{lemma}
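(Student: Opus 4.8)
The plan is to show that the nonlinear functional $F$ vanishes on the dense subclass $S_G^0(0,T)$ and then to propagate this to all of $Y\in S_G^p(0,T)$ by approximation, using two soft properties of $F$: it is subadditive and it is dominated by the $S_G^p$-norm. For $\varepsilon>0$ I would set
\[
G_\varepsilon(Z):=\left(\hat{\mathbb{E}}\Big[\sup_{t\in[0,T]}\sup_{s\in[t,t+\varepsilon]}|Z_t-Z_s|^p\Big]\right)^{1/p},
\]
so that $F(Z)=\limsup_{\varepsilon\to0}G_\varepsilon(Z)$. Since $|Z_t-Z_s|\le 2\sup_u|Z_u|$ pointwise, one gets $G_\varepsilon(Z)\le 2\|Z\|_{S_G^p}$ for every $\varepsilon$, hence $F(Z)\le 2\|Z\|_{S_G^p}$. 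Moreover, for $Z=U+W$ the triangle inequality gives $\sup_t\sup_s|Z_t-Z_s|\le\sup_t\sup_s|U_t-U_s|+\sup_t\sup_s|W_t-W_s|$ pointwise, and the Minkowski inequality for the sublinear expectation $\hat{\mathbb{E}}$ yields $G_\varepsilon(U+W)\le G_\varepsilon(U)+G_\varepsilon(W)$; passing to the $\limsup$ shows $F(U+W)\le F(U)+F(W)$.

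Granting these, the extension is immediate. For $Y\in S_G^p(0,T)$ choose $\{Y^{(m)}\}\subset S_G^0(0,T)$ with $\|Y-Y^{(m)}\|_{S_G^p}\to0$; subadditivity and domination give
\[
F(Y)\le F(Y-Y^{(m)})+F(Y^{(m)})\le 2\|Y-Y^{(m)}\|_{S_G^p}+F(Y^{(m)}),
\]
so everything reduces to proving $F(Y^{(m)})=0$ for elementary processes and then letting $m\to\infty$.

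The core computation is thus Step 1, for $Y_t=h(t,B_{t_1\wedge t},\dots,B_{t_n\wedge t})$ with $h\in C_{b,Lip}(\mathbb{R}^{n+1})$ of Lipschitz constant $L$. For $t\le s\le t+\varepsilon$, since $0\le (t_i\wedge s)-(t_i\wedge t)\le s-t\le\varepsilon$, one has
\[
|Y_t-Y_s|\le L\Big(\varepsilon+\sum_{i=1}^n|B_{t_i\wedge t}-B_{t_i\wedge s}|\Big)\le L\big(\varepsilon+n\,\omega_B(\varepsilon)\big),
\]
where $\omega_B(\varepsilon):=\sup\{|B_u-B_v|:u,v\in[0,T],\,|u-v|\le\varepsilon\}$ is the uniform modulus of continuity of $G$-Brownian motion on $[0,T]$ (the convention $Y_s=Y_T$ for $s>T$ only extends the path by a constant and does not increase its oscillation). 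Hence $G_\varepsilon(Y)\le L\big(\varepsilon+n(\hat{\mathbb{E}}[\omega_B(\varepsilon)^p])^{1/p}\big)$, and $F(Y)=0$ follows once we know $\hat{\mathbb{E}}[\omega_B(\varepsilon)^p]\to0$ as $\varepsilon\to0$.

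This last estimate is where I expect the real difficulty to lie; it is a Kolmogorov--Chentsov statement in the $G$-framework. I would derive it from the moment bounds $\hat{\mathbb{E}}[|B_u-B_v|^{2m}]\le C_m|u-v|^m$ available for $G$-Brownian motion: a Garsia--Rodemich--Rumsey / Kolmogorov continuity argument then yields, for any $\alpha\in(0,\tfrac12)$ and a suitably large exponent $q\ge p$, a finite bound $\hat{\mathbb{E}}[K^q]<\infty$ for the $\alpha$-Hölder seminorm $K$ of $B$, whence $\omega_B(\varepsilon)\le K\varepsilon^\alpha$ and $\hat{\mathbb{E}}[\omega_B(\varepsilon)^p]\le(\hat{\mathbb{E}}[K^q])^{p/q}\varepsilon^{\alpha p}\to0$. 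The delicate point is that these moment and maximal estimates must hold quasi-surely and uniformly over the representing set $\mathcal{P}$, which is precisely where the weak compactness of $\mathcal{P}$ and the capacity-based continuity of the canonical process are used; since the statement is quoted from \cite{LPS}, one may alternatively invoke this modulus-of-continuity estimate directly as established there.
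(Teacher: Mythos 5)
Your proposal is correct and takes essentially the same route as the source of this lemma: the paper itself quotes the statement from \cite{LPS} without proof, and the argument there likewise establishes $F=0$ on the dense class $S_G^0(0,T)$ via the Lipschitz structure of $h$ and the uniform modulus of continuity of $G$-Brownian motion, then extends to all of $S_G^p(0,T)$ by the subadditivity of $F$ and its domination by $2\|\cdot\|_{S_G^p}$ (Minkowski holds for the sublinear $\hat{\mathbb{E}}$). The one ingredient you defer, $\hat{\mathbb{E}}[\omega_B(\varepsilon)^p]\rightarrow 0$ as $\varepsilon\rightarrow 0$, is exactly the Kolmogorov-type continuity estimate for $G$-Brownian motion under capacity (available, e.g., in \cite{P10}), so invoking it closes the proof.
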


The parameters of reflected $G$-BSDE consist of the following three parts: the generator $f$, the obstacle process $\{X_t\}_{t\in[0,T]}$ and the terminal value $\xi$, where $f$ is the map
\begin{displaymath}
f(t,\omega,y,z):[0,T]\times\Omega_T\times\mathbb{R}^2\rightarrow\mathbb{R}.
\end{displaymath}

We will make the following assumptions: There exists some $\beta>2$ such that
\begin{description}
\item[(H1)] for any $y,z$, $f(\cdot,\cdot,y,z)\in M_G^\beta(0,T)$;
\item[(H2)] $|f(t,\omega,y,z)-f(t,\omega,y',z')|\leq L(|y-y'|+|z-z'|)$ for some $L>0$;
\item[(H3)] $\xi\in L_G^\beta(\Omega_T)$ and $\xi\geq S_T$, $q.s.$;
\item[(H4)] There exists a constant $c$ such that $\{X_t\}_{t\in[0,T]}\in S_G^\beta(0,T)$ and $X_t\leq c$, for each $t\in[0,T]$;
\item[(H4')] $\{X_t\}_{t\in[0,T]}$ has the following form
\begin{displaymath}
X_t=X_0+\int_0^t b(s)ds+\int_0^t l(s)d\langle B\rangle_s+\int_0^t \sigma(s)dB_s,
\end{displaymath}
where $\{b(t)\}_{t\in[0,T]}$, $\{l(t)\}_{t\in[0,T]}$ belong to $M_G^\beta(0,T)$ and $\{\sigma(t)\}_{t\in[0,T]}$ belongs to $H_G^\beta(0,T)$.
\end{description}

Let us now introduce the reflected $G$-BSDE with a lower obstacle. A triple of processes $(Y,Z,L)$ is called a solution of reflected $G$-BSDE with a lower obstacle if for some $1<\alpha\leq \beta$ the following properties hold:
\begin{description}
\item[(a)]$(Y,Z,L)\in\mathcal{S}_G^{\alpha}(0,T)$ and $Y_t\geq X_t$, $0\leq t\leq T$;
\item[(b)]$Y_t=\xi+\int_t^T f(s,Y_s,Z_s)ds
-\int_t^T Z_s dB_s+(L_T-L_t)$;
\item[(c)]$\{-\int_0^t (Y_s-X_s)dL_s\}_{t\in[0,T]}$ is a nonincreasing $G$-martingale.
\end{description}
Here, we denote by $\mathcal{S}_G^{\alpha}(0,T)$ the collection of processes $(Y,Z,L)$ such that $Y\in S_G^{\alpha}(0,T)$, $Z\in H_G^{\alpha}(0,T)$, $L$ is a continuous nondecreasing process with $L_0=0$ and $L\in S_G^\alpha(0,T)$.

\begin{theorem}\label{the1.14}
Suppose that $\xi$, $f$ satisfy \textsc{(H1)}--\textsc{(H3)} and $X$ satisfies \textsc{(H4)} or \textsc{(H4')}. Then, the reflected $G$-BSDE with data $(\xi,f,X)$ has a unique solution $(Y,Z,L)$. Moreover, for any $2\leq \alpha<\beta$ we have $Y\in S^\alpha_G(0,T)$, $Z\in H_G^\alpha(0,T)$ and $L\in S_G^{\alpha}(0,T)$.
\end{theorem}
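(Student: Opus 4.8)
The plan is to combine a penalization scheme with a fixed-point argument, following the strategy developed for reflected $G$-BSDEs in \cite{LPSH, LPS}. The first reduction removes the dependence of the generator on the unknowns. Using the Lipschitz assumption (H2), I would introduce the map $\Gamma$ sending a pair $(U,V)\in S_G^\alpha(0,T)\times H_G^\alpha(0,T)$ to the solution $(Y,Z,L)$ of the reflected $G$-BSDE whose generator is the frozen process $f(s,U_s,V_s)$. Equipping the product space with an exponentially weighted norm and invoking the a priori estimates below, one shows that $\Gamma$ is a contraction (for a suitable weight, or on a short interval that is then patched together), so a unique fixed point exists. This reduces the whole statement to the case in which the generator $f(s)$ does not depend on $(y,z)$.

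For the frozen generator I would use penalization. The existence theory for standard $G$-BSDEs of Hu--Ji--Peng--Song \cite{HJPS1, HJPS2} yields, for each $m$, a unique triple $(Y^m,Z^m,K^m)$ with $K^m$ a nonincreasing $G$-martingale solving
\begin{equation*}
Y^m_t=\xi+\int_t^T f(s)\,ds+m\int_t^T (Y^m_s-X_s)^-\,ds-\int_t^T Z^m_s\,dB_s-(K^m_T-K^m_t).
\end{equation*}
Set $L^m_t=m\int_0^t (Y^m_s-X_s)^-\,ds$. The comparison theorem for $G$-BSDEs shows that $Y^m$ is nondecreasing in $m$, and uniform a priori estimates, obtained by applying the $G$-It\^o formula to $|Y^m|^\alpha$ together with the Burkholder--Davis--Gundy inequality under $G$-expectation, bound $\sup_m\hat{\mathbb{E}}[\sup_{t}|Y^m_t|^\alpha]$, $\sup_m\|Z^m\|_{H_G^\alpha}$ and $\sup_m\hat{\mathbb{E}}[|L^m_T|^\alpha]$ for every $2\le\alpha<\beta$. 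Monotone convergence produces a limit $Y$, while the uniform $H_G^\alpha$ bound and a Cauchy argument give $Z^m\to Z$ in $H_G^\alpha$ and $L^m\to L$ with $L$ continuous, nondecreasing and $L_0=0$.

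It then remains to identify the limit as a genuine solution. The estimate $\hat{\mathbb{E}}[\int_0^T (Y^m_s-X_s)^-\,ds]\le C/m\to 0$ forces $Y\ge X$ quasi-surely, which is condition (a), and passing to the limit in the penalized equation yields the representation (b) with the correct integrability $Y\in S_G^\alpha(0,T)$, $Z\in H_G^\alpha(0,T)$, $L\in S_G^\alpha(0,T)$ for $2\le\alpha<\beta$. The delicate point is the minimality condition (c). Here I would analyse the penalized pushing terms, for which $-\int_0^t (Y^m_s-X_s)\,dL^m_s=m\int_0^t ((Y^m_s-X_s)^-)^2\,ds\ge 0$, together with the nonincreasing $G$-martingale $K^m$ supplied by the $G$-BSDE solution, and show that the limiting combination satisfies the minimal martingale condition replacing the classical Skorohod flat-off relation $\int_0^T (Y_s-X_s)\,dL_s=0$. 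Uniqueness finally follows by applying the $G$-It\^o formula to $|Y-Y'|^2$ for two solutions and using (H2) together with the fact that condition (c) renders the relevant cross terms nonpositive.

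The main obstacle is precisely the verification of condition (c) in the limit. Because the $G$-framework admits no dominating reference probability, the classical Skorohod argument, that $L$ increases only on the contact set $\{Y=X\}$, is unavailable; instead one must carry the nonincreasing $G$-martingale $K^m$ through the penalization limit and prove that the limiting pushing term obeys the minimal martingale condition rather than the flat-off condition. A secondary subtlety, already visible in the statement, is the strict loss of integrability from $\beta$ to every $\alpha<\beta$: this is forced by the fact that Doob's inequality under $G$-expectation raises the exponent on its right-hand side, so the a priori estimates close only for $\alpha$ strictly below $\beta$.
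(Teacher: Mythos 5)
A point of order first: the paper you were asked to match does not actually prove Theorem \ref{the1.14} --- Appendix B only recalls it from \cite{LPSH} (see also \cite{L}), so the relevant comparison is with the proof given there. Your core strategy is the same as that source's: penalized $G$-BSDEs solved by the theory of \cite{HJPS1,HJPS2}, monotonicity of $Y^m$ in $m$ by the comparison theorem, uniform a priori estimates for every $2\le\alpha<\beta$ (with the strict loss below $\beta$ forced by the exponent gap in Doob's inequality under $\hat{\mathbb{E}}$, exactly as you note), passage to the limit, verification of the martingale condition (c) in place of the Skorohod flat-off condition, and uniqueness via a priori estimates in which (c) makes the cross terms with the pushing processes harmless. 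You have also correctly identified (c) as the genuinely delicate point in the non-dominated setting.

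There are, however, three deviations or soft spots worth flagging. First, \cite{LPSH} does \emph{not} freeze the generator: since $f(s,y,z)+m(y-X_s)^-$ is still Lipschitz in $(y,z)$, the penalized equations are solved with the full generator and no fixed-point reduction is needed; your contraction step is in fact the riskiest part of your plan, because the available stability estimates for reflected $G$-BSDEs control the $Z$-difference only through $(\hat{\mathbb{E}}[\sup_t|\hat{Y}_t|^\alpha])^{1/2}$ times a factor involving the total variations of the pushing processes, so your map $\Gamma$ is H\"older-$\frac{1}{2}$ rather than visibly Lipschitz in that component and is not obviously a contraction in any weighted product norm. Second, there is a slip in your limit passage: with $L^m_t=m\int_0^t(Y^m_s-X_s)^-\,ds$, it is not $L^m$ alone but the combination $A^m_t=L^m_t-K^m_t$ (note $-K^m$ is nondecreasing) that converges to the nondecreasing process $L$ of the solution; your later paragraph on condition (c) implicitly corrects this, but as written in the convergence step, passing to the limit in (b) with $L^m$ alone loses the martingale part. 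Third, you never indicate where (H4) or (H4') enters: the hardest analytic step in \cite{LPSH} is the uniform estimate $\hat{\mathbb{E}}[\sup_{t\in[0,T]}((Y^m_t-X_t)^-)^\alpha]\rightarrow 0$, which is what yields convergence of $Y^m$ in $S_G^\alpha$ and allows the limit passage in (c), and it is precisely there that the boundedness of $X$ from above, or its It\^{o}-process form, is used; your bound $\hat{\mathbb{E}}[\int_0^T(Y^m_s-X_s)^-\,ds]\le C/m$ only gives $Y\ge X$ quasi-surely and is far from sufficient for that step. None of these invalidates the overall plan, but the second and third must be repaired for the argument to close.
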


\begin{proposition}\label{1}
	Let $f=0$. Suppose that $\xi$ satisfies (H3) and $X$ satisfies (H4) or (H4'). Then the first argument $Y$ of the solution to the reflected $G$-BSDE with data $(\xi,0,X)$ is a $G$-supermartingale dominating the process $X$.
\end{proposition}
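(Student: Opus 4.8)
The plan is to extract both assertions directly from the defining properties (a)--(c) of the solution triple $(Y,Z,L)$, specialized to $f=0$. The domination $Y_t\ge X_t$ is nothing but part of property (a), so the only real content is the $G$-supermartingale property, and the structural fact I would exploit is that, with a vanishing generator, the increments of $Y$ split into a symmetric $G$-martingale (a $G$-It\^o integral) plus the increment of the nonincreasing process $-L$.

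First I would record the incremental form of (b). Writing (b) at two times $s\le t$ and subtracting, the terminal value $\xi$ and the tail integral cancel, leaving
\begin{displaymath}
Y_t=Y_s+\int_s^t Z_r\,dB_r-(L_t-L_s),\qquad 0\le s\le t\le T.
\end{displaymath}
Here $Y_s$ is $\Omega_s$-measurable, $\int_s^t Z_r\,dB_r$ is a $G$-It\^o integral over $[s,t]$, and $L_t-L_s\ge 0$ since $L$ is continuous and nondecreasing with $L_0=0$.

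Next I would apply $\hat{\mathbb{E}}_s[\cdot]$ to this identity. Using that $Y_s\in\bar{L}_G^{1^*_*}(\Omega_s)$ together with property (2) of Proposition \ref{os20}, I can pull $Y_s$ out of the conditional $G$-expectation, obtaining
\begin{displaymath}
\hat{\mathbb{E}}_s[Y_t]=Y_s+\hat{\mathbb{E}}_s\Big[\int_s^t Z_r\,dB_r-(L_t-L_s)\Big].
\end{displaymath}
Since $-(L_t-L_s)\le 0$ q.s., the monotonicity in property (1) of Proposition \ref{os20} gives $\hat{\mathbb{E}}_s[\int_s^t Z_r\,dB_r-(L_t-L_s)]\le\hat{\mathbb{E}}_s[\int_s^t Z_r\,dB_r]$; and because the $G$-It\^o integral $\int_0^\cdot Z_r\,dB_r$ is a symmetric $G$-martingale, its conditional increment has zero $G$-expectation, i.e. $\hat{\mathbb{E}}_s[\int_s^t Z_r\,dB_r]=0$. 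Combining these yields $\hat{\mathbb{E}}_s[Y_t]\le Y_s$, which is precisely the $G$-supermartingale inequality. I would note that only the monotonicity of $L$ enters here; the minimality condition (c) plays no role.

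The arguments above are all soft once integrability is in place, so the main point to check carefully is membership in the correct function spaces. Since $(Y,Z,L)\in\mathcal{S}_G^{\alpha}(0,T)$ for some $\alpha>1$, one has $Y\in S_G^{\alpha}(0,T)$, $Z\in H_G^{\alpha}(0,T)$ and $L\in S_G^{\alpha}(0,T)$, which guarantees that each $Y_t$ lies in $\bar{L}_G^{1^*_*}(\Omega_t)$, that the stochastic integral is well defined and is a symmetric $G$-martingale, and that the linearity and monotonicity rules of Proposition \ref{os20} legitimately apply to each term. I expect this bookkeeping, rather than any genuine analytic difficulty, to be the only delicate point of the proof.
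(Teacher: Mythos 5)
Your proof is correct. The paper states Proposition \ref{1} without giving a proof (it is recalled as background from the reflected $G$-BSDE literature in Appendix B), and your argument --- the incremental form of (b) with $f=0$, pulling $Y_s$ out of $\hat{\mathbb{E}}_s$, annihilating the symmetric $G$-martingale part $\int_s^t Z_r\,dB_r$, and using only the monotonicity of $L$ --- is exactly the standard derivation; your remark that the minimality condition (c) plays no role in the supermartingale direction is also accurate.
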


\end{document}